\documentclass[11pt]{amsart}
\usepackage[foot]{amsaddr}
        \topmargin-.3in
 	\oddsidemargin-.2in
	\evensidemargin-.2in
	\textheight50\baselineskip
	\textwidth6.87in
\usepackage{here}
\usepackage{multicol}
\usepackage{caption}
\usepackage{amsmath,amsthm}
\usepackage{amssymb,latexsym}
\usepackage{amscd}
\usepackage{enumerate}
\usepackage{latexsym}
\usepackage{ascmac}
\usepackage{graphics}
\usepackage[dvipdfmx]{graphicx}
\usepackage{oldgerm}
\usepackage{bm}
\usepackage{seqsplit} %
\usepackage{tikz}
\usetikzlibrary{arrows.meta}
\usetikzlibrary{plotmarks}
\usepackage{mathtools}
\mathtoolsset{showonlyrefs=true}
\usepackage{hyperref}
\usepackage{setspace}
\usepackage{mathrsfs}%
\usepackage{arrayjobx}%
\usepackage{color}%
\usepackage{thmtools}%
\usepackage{ulem}%
\usepackage{enumitem}%
\usepackage{young}%
\makeatletter
\renewcommand{\footnoterule}{\kern -3pt \hrule width \textwidth \kern 2.6pt}

\newcommand{\mysos}{S\'{o}s}
\newcommand{\mysur}{Sur\'{a}nyi}

\newcommand{\myChi}[1]{\left[\hspace{-0.2em}\left[ {#1} \right]\hspace{-0.2em}\right]{}}

\newtheorem{prop}{Proposition}
\newtheorem{lemma}{Lemma}
\newtheorem{theo}{Theorem}
\newtheorem*{thm}{Theorem}
\newtheorem{coro}{Corollary}
\newtheorem{defi}{Definition}
\newtheorem{fact}{Fact}

\newtheorem{rema}{Remark}
\newtheorem{exam}{Example}

\definecolor{darkgreen}{rgb}{0,0.5,0}

\newcommand{\myv}[2]{{\textstyle{\binom{#1}{#2}}}}
\newcommand{\myh}[2]{{({#1}, {#2})}}
\newcommand{\nyh}[2]{{{#1}, {#2}}}
\newcommand{\upmapzero}[2] {u_{\nyh{#1}{#2}}}
\newcommand{\upmapzerodon}[1] {\breve{u}_{#1}}
\newcommand{\phidon}[1] {\breve{\phi}_{#1}}

\newcommand{\RT}[2]{{\mathrm{RT}_\nyh{#1}{#2}}}
\newcommand{\Yng}[2]{{\mathrm{Y}_\nyh{#1}{#2}}}
\newcommand{\LYng}[2]{{\mathrm{LY}_\nyh{#1}{#2}}}
\newcommand{\TYng}[2]{{\mathrm{TY}_\nyh{#1}{#2}}}
\newcommand{\TYngdon}[1]{{\breve{\mathrm{TY}}_{#1}}}
\newcommand{\Lsetitimono}[2] {\mathcal{E}_{\nyh{#1}{#2}}^{1,\text{in}}}
\newcommand{\Lsetitimonozero}[2] {{\mathcal{T}}_{\nyh{#1}{#2}}^{1,\text{in}}}

\newcommand{\Lsetitimonozerodon}[1] {\breve{\mathcal{T}}_{#1}}
\newcommand{\Garey}[2]{\mathrm{G}_{\nyh{#1}{#2}}}
\newcommand{\Gseq}[2]{\mathrm{G}_{\nyh{#1}{#2}}^{\mathrm{seq}}}
\newcommand{\Gint}[2]{\mathrm{G}_{\nyh{#1}{#2}}^{\mathrm{int}}}
\newcommand{\Inter}[2] {\mathcal{G}_{\nyh{#1}{#2}}}
\newcommand{\Interdon}[1] {\breve{\mathcal{G}}_{#1}}
\newcommand{\trimap}[2] {{V}_{\nyh{#1}{#2}}}
\newcommand{\trimapdon}[1] {\breve{V}_{#1}}
\newcommand{\omicron}{o}
\newcommand{\tp}[1]{{}^t{#1}}

\newcommand{\yas}[2]{\mathsf{S}_{#2}^{#1}}
\newcommand{\yast}[2]{\mathsf{ST}_{#2}^{#1}}
\newcommand{\yastdon}[1]{\breve{\mathsf{ST}}_{#1}}
\newcommand{\delf}[3]{\Delta_\nyh{#1}{#2}\left({#3}\right)}
\newcommand{\Th}[1]{\ensuremath{{#1}^{\textrm{th}}}}
\newcommand{\order}{size \relax}
\newsavebox{\BEm}
\savebox{\BEm}{\small\texttt{donogo@st.rim.or.jp}}
\author{Makoto Nagata$^{1)}$}
\address{1) Faculty of Pharmacy, Osaka Medical and Pharmaceutical University}
\author{Yoshinori Takei$^{2)}$ }
\address{2) Independent researcher, \usebox{\BEm}.}
\title[A simple recursive characterization of a generalized Farey tree]{A simple recursive relation characterizes a tree associated to generalized Farey sequences}
\subjclass[2020]{Primary 05A05, Secondary 11B37, 11B50.}
\keywords{Farey sequence, S\'{o}s permutation,  Sur{\'{a}}nyi's bijection}
\begin{document}
\begin{abstract}
  This paper proves that two differently defined rooted binary trees
  are isomorphic. The first tree is one associated to a version of
  Farey sequences where the vertices correspond to the
  open intervals formed by two successive terms in the sequence.
  The other tree
  has the vertices consisting of pairs of positive integers
  whose adjacency is defined by a simple recursive relation.
  These trees appeared in a study
  of a generalization of a
  class of the permutations defined by {\mysos} and
  the bijection between it and the set of the Farey intervals
  due to {\mysur}.%
\end{abstract}
\maketitle
\section{Introduction}\label{sect:intro}
In this paper, $[a, b]$ and $(a, b)$ denote
the closed real interval
 $\{x : a \leq x \leq b \}$ and
the open real interval $\{x : a < x < b\}$ respectively.
  In order to avoid confusion, a pair of two numbers $(i, j)$
  will be denoted as
  the vertical vector $\myv{i}{j}$ often.
  For a positive integer $m$, $[m]$ denotes the set
  $\{1, 2, \ldots, m\}$. The symbols $\wedge$ and $\vee$
  mean the logical conjunction and the logical disjunction, respectively.

  The aim of this paper is %
  to show that the two differently defined binary rooted
    trees
  in a study \cite{ranking3} of a generalization
  of the class of permutations defined by {\mysos} \cite{sos}
  are isomorphic.
   In both trees, each of the vertices
  has double indices $\myv{m}{n}$
  (where $m$ and $n$ are referred to as \textit{horizontal}
  and \textit{vertical} indices respectively)
  of nonnegative integers
  and its level (i.e. the distance from the root)
  is associated to the sum  $m + n$ of the indices.

  The first tree is one related to a generalized version of
  Farey fractions shown below:
\begin{defi}[{\cite{ranking3}}, see also \cite{Glaisher}]\label{defi:garey}
  For nonnegative integers $m$ and $n$,
  let
\begin{equation*}
\Garey{m}{n} := \begin{cases} \{0,\infty\} & \text{\ if\ }mn = 0, \\
\{0,\infty\} \cup
  \left\{\frac{p}{q} : p \in [m], q \in [n] \right\} & \text{\ otherwise},
\end{cases}
\end{equation*}
and let $\Gseq{m}{n}$ be the increasing sequence formed by the elements
of $\Garey{m}{n}$ without repetition. 
Also, let $\Gint{m}{n}$
be the collection $\{ (a, b) : a, b \in \Garey{m}{n}, a < b,
a \text{\ and\ } b \text{\ are\ adjacent in \ } \Gseq{m}{n} \}$
of the open intervals formed by the adjacent terms in $\Gseq{m}{n}$.
Furthermore, let
$$
\Inter{m}{n} :=  \{ ((a, b), \myv{m}{n}) : (a, b) \in \Gint{m}{n} \}
$$
be the collection of the open intervals in $\Gint{m}{n}$ tagged
with the indices $\myv{m}{n}$.
\end{defi}

In the first tree to which we will refer as
$\mathbb{T}_N^{\nyh{\Interdon{}}{\trimapdon{}}}$ (where $N$ is the height of the tree),
the level $k$ vertices are the elements in
$\Interdon{k} := \bigsqcup_{\begin{smallmatrix}m, n\geq 0 \\ m + n = k\end{smallmatrix}} \Inter{m}{n}.$
A level $k$ vertex $((a, b), \myv{m}{n})$ has at most
two children in $\Interdon{k+1}$, 
one is of the form $((a, b'), \myv{m+1}{n})$ and
the other is of the form $((a', b), \myv{m}{n+1})$.
These children are defined as the inverse image
of $((a, b), \myv{m}{n})$ by 
a surjection $\trimapdon{k+1}:\Interdon{k+1} \rightarrow \Interdon{k}$
whose definition 
is based on the position of the interval $(a,b)$ in $(0,\infty)$
and satisfy $(a,b') \subset (a,b)$ and
$(a',b) \subset (a,b)$. See Fact \ref{fact:invtrimap}
and Definition \ref{defi:trimaptree}
in
Section \ref{sss:Inter} for the detail.

The second tree, where a vertex is of the form
$(s, t, \myv{m}{n})$ such that $s, t \in [mn]$, %
is defined by the following simple recursive relation.
The root is 
$(1, 1, \myv{1}{1})$ and a
 vertex $(s, t, \myv{m}{n})$ has level $k$ when
 $m + n = k + 2$. %
And the rule
 \begin{equation}
  \text{a vertex\ }
  (s, t, \myv{m}{n})
  \text{\ has\ }
\begin{cases}
  \text{a \textit{horizontal} child\ }
(s + n, t, \myv{m+1}{n}) & \text{\ if\ } s - t > -n \\
\text{a \textit{vertical} child\ }
(s, t + m, \myv{m}{n+1}) & \text{\ if\ } s - t < m
\end{cases}
\end{equation}
 determines the set of the level $k+1$ vertices and the
 edges between the levels $k$ and $k+1$
 (the first and the second lines are not exclusive,
 therefore, a level $k$ vertex has 
 at most two children in the set
 of the level $k+1$ vertices).
For example, the root $(1, 1, \myv{1}{1})$ at level $0$
has two children $(2,1,\myv{2}{1})$ and $(1,2,\myv{1}{2})$
which are all the  level $1$ vertices. By applying the relation 
recursively, we obtain the level $2, 3, \ldots, N$ vertices in order.
The resulting tree will be
referred to as $\mathbb{T}_N^{\nyh{\Lsetitimonozerodon{}}{\upmapzerodon{}}},$
whose formal definition is given in Definition \ref{defi:upmaptree},
Section \ref{sss:upmaptree}.

The main
 result of this paper is as follows.
\begin{thm}
For any nonnegative integer $N$,
  the trees $\mathbb{T}_N^{\Interdon{},\trimapdon{}}$
  and $\mathbb{T}_N^{\Lsetitimonozerodon{},\upmapzerodon{}}$
  are isomorphic.
\end{thm}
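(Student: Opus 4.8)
The plan is to exhibit an explicit level-preserving isomorphism and verify it edge by edge. Both objects are rooted trees in which every non-root vertex has a single parent (in $\mathbb{T}_N^{\Interdon{},\trimapdon{}}$ through the surjection $\trimapdon{}$, in $\mathbb{T}_N^{\Lsetitimonozerodon{},\upmapzerodon{}}$ by reversing the recursion of Definition \ref{defi:upmaptree}), so it suffices to produce, for each level $k$, a bijection $\Phi_k\colon\Interdon{k}\to\Lsetitimonozerodon{k}$ that sends root to root and commutes with the parent maps, carrying horizontal edges to horizontal edges and vertical edges to vertical edges. A level $k$ vertex of the first tree carries an index $\myv mn$ with $m+n=k$, whereas a level $k$ vertex of the second carries $\myv mn$ with $m+n=k+2$; this forces the index identification $\myv mn\mapsto\myv{m+1}{n+1}$, under which the root $((0,\infty),\myv00)$ must map to $(1,1,\myv11)$ and the two unit-increments of the indices line up.

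The heart of the construction is an explicit coordinate map. To a tagged interval $((a,b),\myv mn)\in\Inter mn$, with endpoints written as $a=p_a/q_a$ and $b=p_b/q_b$ (using $0=0/1$, $\infty=1/0$), I would assign
\[
\Phi\big((a,b),\myv mn\big):=\Big(\,(m+1)+\#\{(p,q)\in[m]\times[n]:p/q\ge b\},\ (n+1)+\#\{(p,q)\in[m]\times[n]:p/q\le a\},\ \myv{m+1}{n+1}\,\Big).
\]
Because $(a,b)$ is a gap of $\Gseq mn$, each generating fraction $p/q$ (with $p\in[m]$, $q\in[n]$) satisfies exactly one of $p/q\le a$ or $p/q\ge b$, so the two counts partition the $mn$ pairs and, writing $(s,t)$ for the first two coordinates,
\[
s+t=(m+1)+(n+1)+mn=(m+1)(n+1)+1 ,
\]
which is precisely the invariant $s+t=m'n'+1$ of the recursive tree at index $\myv{m'}{n'}=\myv{m+1}{n+1}$. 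Since $t$ is strictly increasing in $a$ at a fixed index, $\Phi$ is injective on each $\Inter mn$.

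Next I would check that $\Phi$ intertwines the two structures. For the root this is immediate. For a horizontal edge, the child in the first tree is the interval $((a,b'),\myv{m+1}{n})$ sharing the left endpoint $a$, produced by adjoining the new fractions $(m+1)/q$ ($q\in[n]$) to $\Gseq mn$. When this child exists no new fraction is $\le a$, so the count defining $t$ is unchanged, and all $n$ new fractions are $\ge b'$ while no old fraction lies in $[b',b)$, so the count defining $s$ grows by exactly $n$; hence $(s,t)\mapsto(s+(n+1),t)$, which is the horizontal rule $(s,t,\myv{m+1}{n+1})\mapsto(s+(n+1),t,\myv{m+2}{n+1})$ of the second tree. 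The vertical case is dual, giving $(s,t)\mapsto(s,t+(m+1))$.

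The main obstacle is matching the \emph{existence} of children. I would show, from the definition of $\trimapdon{}$ (Definition \ref{defi:trimaptree} and Fact \ref{fact:invtrimap}), that $((a,b),\myv mn)$ admits a horizontal child exactly when $a<(m+1)/n$ (equivalently, no new fraction $(m+1)/q$ falls at or below $a$) and a vertical child exactly when $b>m/(n+1)$. These geometric inequalities must then be matched with the arithmetic thresholds $s-t>-(n+1)$ and $s-t<m+1$ that govern the recursion; via the formula, and using $s-t=(m-n)+mn-2\,\#\{(p,q)\in[m]\times[n]:p/q\le a\}$, this reduces to the lattice-point identity
\[
a<\tfrac{m+1}{n}\iff \#\{(p,q)\in[m]\times[n]:p/q\le a\}<\tfrac{m(n+1)+1}{2},
\]
together with its vertical dual. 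Proving this identity, and reconciling it with the position-based prescription of $\trimapdon{}$, is the technical crux. Granting it, an induction on $k$ finishes: the base case is the singletons at level $0$, and the inductive step uses that each level $k+1$ vertex is a unique horizontal or vertical child and that $\Phi$ preserves both child-type and child-existence, so every $\Phi_k$ is a bijection commuting with the parent maps, proving the two trees isomorphic for all $N$.
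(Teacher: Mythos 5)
Your map $\Phi$ is in fact exactly the paper's bijection $\yastdon{}$: evaluating the Young ranking table $\tau^{\nyh{m+1}{n+1}}_{\nyh{1}{\xi}}$ of Definition \ref{defi:yr} at the corners $\myv{m+1}{1}$ and $\myv{1}{n+1}$ for $\xi\in(a,b)$ gives precisely $(m+1)+\#\{(p,q)\in[m]\times[n]:p/q\ge b\}$ and $(n+1)+\#\{(p,q)\in[m]\times[n]:p/q\le a\}$, so your explicit counting formula, the root correspondence, the index shift $\myv{m}{n}\mapsto\myv{m+1}{n+1}$, and the edge computation (horizontal child $\Rightarrow$ $(s,t)\mapsto(s+(n+1),t)$, using that no element of $\Garey{m}{n}$ lies in $[b',b)$ and that no new fraction $(m+1)/q$ is $\le a$ when the child exists) all reproduce the paper's Lemma \ref{lemma:phiupmapzero}. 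The concluding induction on levels is also structurally the same as the paper's proof of Theorem \ref{theo:main}.

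The genuine gap is the step you yourself flag and then grant: the equivalence between child-existence in the two trees, i.e.
$a<\tfrac{m+1}{n}\iff \#\{(p,q)\in[m]\times[n]:p/q\le a\}<\tfrac{m(n+1)+1}{2}$
and its vertical dual. This is not a routine lattice-point identity; it is the entire content of the paper's Lemma \ref{lem:branch}, and the delicate part is the boundary behaviour. In the paper's language the inequality $s-t>-(n+1)$ becomes a lower bound on $\delf{m+1}{n+1}{\xi}$ for $\xi$ just to the right of $a$, and one must (a) show $\delf{}{}{\cdot}$ is monotone and takes the exact value $1-n$ at the critical slope $\tfrac{m}{n-1}$ (Proposition \ref{prop:fund}(vii), proved by a reflection argument counting lattice points in two congruent triangles), and (b) exclude the case $a=\tfrac{m}{n-1}$, which can genuinely occur when $\gcd(m,n-1)>1$ and requires the one-sided limit formula $\delf{m}{n}{\tfrac{m}{n-1}+\omicron}=1-n-(\gcd(m,n-1)-1)$ to derive a contradiction. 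Your proposal contains no argument for either point, and in particular the $\gcd$ correction at non-reduced fractions is exactly the kind of subtlety that a naive "count the fractions below $a$" argument misses. Without this, the induction cannot close: you would not know that every child present in $\mathbb{T}_N^{\Lsetitimonozerodon{},\upmapzerodon{}}$ is hit by $\Phi$, which is what forces $\Phi_k$ to be surjective (equivalently, $\TYngdon{k}=\Lsetitimonozerodon{k}$). So the proposal is a correct and well-aimed outline of the paper's own argument, but the technical crux is asserted rather than proved.
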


The rest of the paper is organized as follows:
Section \ref{sect:defknown} introduces
the definitions and known facts 
and uses them to define
these %
trees. The third tree $\mathbb{T}_N^{\TYngdon{},\phidon{}}$
that is seen as
  an intermediate object between the two is also introduced.
In Section \ref{sect:trees}, 
it is proved
that the three trees are
isomorphic. 
In Appendix, as well as an example of the trees,
  the background and implications of this
study in the context of the study of
generalizations of the inverses of {\mysos} permutations
\cite{sos} and {\mysur}'s bijection \cite{sur} are described.

\section{Definitions and known facts on the three trees}
\label{sect:defknown}
In this section,
the three trees $\mathbb{T}_N^{\nyh{\Interdon{}}{\trimapdon{}}}$,
$\mathbb{T}_N^{\nyh{\Lsetitimonozerodon{}}{\upmapzerodon{}}}$
and
$\mathbb{T}_N^{\TYngdon{},\phidon{}}$ of height $N$
are defined. For each of them, the subsections
  that describe the vertices and the edges are put respectively.
As described
in the previous section, the set of vertices
in any of the three trees are indexed with $\myv{m}{n}$ and
the level of them is defined based on the sum $m + n$.

For each of the three trees, an inter-level surjection
from the set of the level $k+1$ vertices to
the set of the level $k$ vertices is defined and used to define
the adjacency.

In the sequel, the Iverson-Knuth brackets
  \begin{equation}
    \myChi{P}  = \begin{cases} 1  & \text{if } P \text{ is true}, \\
      0 &  \text{if } P \text{ is false}
      \end{cases}
  \end{equation}
  are used extensively to denote the %
  truth value of a predicate
    $P$.
Double parentheses in
  $\theta\left(\myv{i}{j}\right)$ will be simplified as
  $\theta\myv{i}{j}$ for readability.

\subsection{The tree $\mathbb{T}_N^{\Interdon{},\trimapdon{}}$ of generalized Farey intervals}

\subsubsection{Level $k$ vertices of the generalized Farey intervals}
As introduced in Section \ref{sect:intro},
the level $k$ vertices of the tree $\mathbb{T}_N^{\Interdon{},\trimapdon{}}$
are defined as the elements of the following set:
\begin{defi}[{\cite[Notation 4.6]{ranking3}}]
For $k \geq 0,$ let
\begin{equation*}
\Interdon{k} := \bigsqcup_{\begin{smallmatrix}m, n\geq 0 \\ m + n = k \end{smallmatrix}} \Inter{m}{n}.
\end{equation*}  
\end{defi}

Each of the vertices allows the following \textit{transposition},
which
works as an involution over $\Interdon{k}$ to which the vertex belongs.
Note that swapping $m$ and $n$ induces a natural order-reversing
  bijection
  $\Gseq{m}{n}\ni p/q \mapsto q/p \in \Gseq{n}{m}$
  which in turn induces a bijection
  $\Gint{m}{n} \ni (a,b) \mapsto (b^{-1}, a^{-1}) \in \Gint{n}{m}$%
, where $0^{-1} = \infty$ and $\infty^{-1} = 0$.
\begin{defi}\label{defi:tpfarey}
  For an element $((a,b), \myv{m}{n}) \in \Inter{m}{n},$
  its transposition
  $\tp{((a,b), \myv{m}{n})}$
  is defined to be \\
  $((b^{-1},a^{-1}), \myv{n}{m})$
  (with the convention $0^{-1} = \infty, \infty^{-1} = 0$),
  which is an element of $\Inter{n}{m}$.
 The operation $\tp{\cdot}$ is referred to as the transposition
  for generalized Farey intervals.
\end{defi}
\subsubsection{The edges of $\mathbb{T}_N^{\Interdon{},\trimapdon{}}$}
  \label{sss:Inter}
For the generalized Farey intervals, the following map
from the level $k$ vertices
to the level $k-1$ vertices
is defined and used to define the adjacency of the tree 
$\mathbb{T}_N^{\Interdon{},\trimapdon{}}$.
We note that the well-definedness of the map $\trimap{m}{n}$ given below
depends on the facts \cite[Fact 4.2]{ranking3} that
  $$
  \Garey{m}{n} = \left\{a \in \Garey{m-1}{n} : a < \frac{m}{n} \right\} \sqcup
  \left\{ \frac{m}{n} \right\} \sqcup
  \left\{ b \in \Garey{m}{n-1} : b > \frac{m}{n} \right\}
  $$
  and \cite[Fact 4.7]{ranking3}
  that for
  an open interval $(a, b) \in \Gint{m}{n}$, exactly
  one of the following mutually exclusive four cases occurs:
  $$
  b < \frac{m}{n}, \ b = \frac{m}{n}, \ a = \frac{m}{n}, \ a > \frac{m}{n}.
  $$
  \begin{defi}[{\cite[Definition 4.8]{ranking3}}]\label{defi:trimap}
  Let $m, n$ be nonnegative integers with $m + n \geq 1$.
  The map $\trimap{m}{n} : \Inter{m}{n}\to \Inter{m-1}{n} \sqcup \Inter{m}{n-1}$
  is defined as follows:

  When $m = 0 \text{~and~} n \geq 1,$
  $$
  \trimap{0}{n}(((0,\infty),\myv{0}{n})) := ((0,\infty),\myv{0}{n-1})
  $$
  (Note that $((0,\infty),\myv{0}{n})$ is the only element of $\Inter{0}{n}$).

  When $m \geq 1 \text{~and~} n = 0,$
  $$
  \trimap{m}{0}(((0,\infty),\myv{m}{0})) := ((0,\infty),\myv{m-1}{0})
  $$
  (Note that $((0,\infty),\myv{m}{0})$ is the only element of $\Inter{m}{0}$).

  When $mn \geq 1,$ for $((a,b), \myv{m}{n}) \in \Inter{m}{n},$
  \begin{equation}
\trimap{m}{n}( ((a,b), \myv{m}{n}) ) 
:=
\begin{cases}
  ((a,b),\myv{m-1}{n}) & (b < \frac{m}{n}), \\
  ((a,\min\{c \in \Garey{m-1}{n} : a < c\}), \myv{m-1}{n} ) & (b = \frac{m}{n}), \\
  ((\max\{c \in \Garey{m}{n-1} : c < b\}, b), \myv{m}{n-1}) & (a = \frac{m}{n}), \\
  ((a,b),\myv{m}{n-1} ) & (a > \frac{m}{n}),
\end{cases}
\label{eq:trimapdef}
\end{equation}
  in which $\min\{c \in \Garey{m-1}{n} : a < c\} \geq b$
  (resp. $\max\{c \in \Garey{m}{n-1} : c < b\} \leq a$)
  holds by $b = \min\{c \in \Garey{m}{n} : a < c\}$
  and $\Garey{m-1}{n}\subset\Garey{m}{n}$ (resp.
  $a = \max\{c \in \Garey{m}{n} : c < b\} \leq a$
   and $\Garey{m}{n-1}\subset\Garey{m}{n}$).
\end{defi}
The map $\trimap{m}{n}$ is extended to the following inter-level map
of the generalized Farey intervals in an obvious way.
\begin{defi}[{\cite[Definition 4.9]{ranking3}}]\label{defi:trimapdon}
  Let  $k$ be a positive integer. Define the map
  $\trimapdon{k}:\Interdon{k}\to\Interdon{k-1}$ as follows:
  Given $x \in \Interdon{k},$ there exists the unique $\myv{m}{n}$ such
  that $m + n = k$ and $x \in \Inter{m}{n}$.
  Let $\trimapdon{k}(x) := \trimap{m}{n}(x) \in \Inter{m-1}{n} \sqcup \Inter{m}{n-1} \subset \Interdon{k-1}.$
\end{defi}

Also, note that the definition of the inter-level map
is consistent with the transposition
for generalized Farey intervals:
\begin{prop}\label{prop:tptrimap}
  Let $m, n$ be nonnegative integers with $m + n \geq 1$. The
  transposition for generalized Farey intervals
  and $\trimap{*}{*}$ commute,
    i.e., it holds that
    $$
    \trimap{n}{m}(\tp{((a,b),\myv{m}{n})}) =
    \trimap{n}{m}(((b^{-1}, a^{-1}),\myv{n}{m})) = \tp{(\trimap{m}{n}(((a,b),\myv{m}{n})))}\quad (((a,b),\myv{m}{n}) \in \Inter{m}{n}),
    $$
    which immediately implies that
      $\trimapdon{k}(\tp{x}) = \tp{(\trimapdon{k}(x))}$ for $k \geq 1$ and
      $x \in \Interdon{k}$.
\end{prop}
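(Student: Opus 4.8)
The plan is to reduce the claim to the single observation that the inversion map $x \mapsto x^{-1}$ (with $0^{-1} = \infty$ and $\infty^{-1} = 0$) is an order-reversing bijection $\Garey{m}{n} \to \Garey{n}{m}$ carrying the pivot $m/n$ to the pivot $n/m$, and that it restricts to the analogous order-reversing bijections $\Garey{m-1}{n} \to \Garey{n}{m-1}$ and $\Garey{m}{n-1} \to \Garey{n-1}{m}$. Granting this, I would first dispose of the degenerate cases $mn = 0$ (but $m+n \geq 1$) by direct inspection: each such $\Inter{m}{n}$ is the singleton $\{((0,\infty),\myv{m}{n})\}$, its transposition is $((0,\infty),\myv{n}{m})$, and both sides of the asserted identity reduce at once to $((0,\infty),\cdot)$. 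The remaining work is the case $mn \geq 1$, handled through the four-way case split of Definition \ref{defi:trimap}.

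The key structural point is that applying the transposition first flips every inequality against the pivot. Writing $(a',b') = (b^{-1}, a^{-1})$, the four conditions $b < m/n$, $b = m/n$, $a = m/n$, $a > m/n$ that govern $\trimap{m}{n}$ become, respectively, $a' > n/m$, $a' = n/m$, $b' = n/m$, $b' < n/m$, which are exactly the conditions governing $\trimap{n}{m}$ in the \emph{reverse} order. Thus the four branches are matched pairwise: the first with the fourth and the second with the third. For the two interior branches $b < m/n$ and $a > m/n$, where $\trimap{m}{n}$ keeps the interval fixed and merely decrements one index, the verification is immediate once the index bookkeeping $\myv{m-1}{n} \leftrightarrow \myv{n}{m-1}$ (resp.\ $\myv{m}{n-1} \leftrightarrow \myv{n-1}{m}$) is tracked through the transposition.

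The genuine content lies in the two boundary branches $b = m/n$ and $a = m/n$, where $\trimap{m}{n}$ replaces one endpoint by a nearest neighbour defined as a $\min$ or $\max$ over a subsequence. In the branch $b = m/n$, for instance, the new endpoint is $c_0 := \min\{c \in \Garey{m-1}{n} : a < c\}$, and I must show its inverse $c_0^{-1}$ equals the endpoint $\max\{c \in \Garey{n}{m-1} : c < a^{-1}\}$ produced by the matching branch of $\trimap{n}{m}$. This is precisely where the order-reversing bijection does the work: inversion sends $\{c \in \Garey{m-1}{n} : a < c\}$ bijectively onto $\{d \in \Garey{n}{m-1} : d < a^{-1}\}$ while reversing order, hence carries the minimum of the former to the maximum of the latter. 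The symmetric argument settles $a = m/n$ with the roles of $\min$ and $\max$ interchanged. I expect this $\min$/$\max$-to-$\max$/$\min$ interchange to be the only delicate step; everything else is routine case-chasing.

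Finally, the assertion about $\trimapdon{k}$ follows formally. If $x \in \Inter{m}{n} \subset \Interdon{k}$ with $m + n = k$, then $\tp{x} \in \Inter{n}{m} \subset \Interdon{k}$, so by definition $\trimapdon{k}(\tp{x}) = \trimap{n}{m}(\tp{x})$ while $\trimapdon{k}(x) = \trimap{m}{n}(x)$. The componentwise identity just established then yields $\trimapdon{k}(\tp{x}) = \tp{(\trimap{m}{n}(x))} = \tp{(\trimapdon{k}(x))}$, as claimed.
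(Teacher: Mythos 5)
Your proposal is correct and follows essentially the same route as the paper's proof: match the four branches of Definition \ref{defi:trimap} in reverse order under transposition, with the only nontrivial step being the $\min$/$\max$ interchange under the order-reversing inversion $\Garey{m-1}{n}\to\Garey{n}{m-1}$ in the two boundary branches. Your explicit treatment of the degenerate $mn=0$ singleton cases is a small addition the paper leaves implicit, but otherwise the arguments coincide.
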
    
\begin{proof}
First, suppose that $((a,b),\myv{m}{n})$ falls into the fourth
 case of \eqref{eq:trimapdef}. Then, $a > m/n$ holds.
 Writing
$((a',b'), \myv{m'}{n'}) := \tp{((a,b),\myv{m}{n})},$
this implies that $b' = a^{-1} < n/m = m'/n'$ and
that $((a',b'), \myv{m'}{n'})$ falls into the first case of 
\eqref{eq:trimapdef}.
Thus, $\trimap{n}{m}(\tp{((a,b),\myv{m}{n})})
   = \trimap{m'}{n'}(((a',b'), \myv{m'}{n'}))
= ((a',b'),\myv{m'-1}{n'})$,
where $((a',b'),\myv{m'-1}{n'}) = ((b^{-1}, a^{-1}), \myv{n-1}{m})$
is the transposition of $((a,b),\myv{m}{n-1})$ which coincides
with $\trimap{m}{n}(((a,b),\myv{m}{n}))$ computed as the fourth
case $a > m/n$ of \eqref{eq:trimapdef}.
Thus, it follows that $\trimap{n}{m}(\tp{((a,b),\myv{m}{n})})
   = \tp{(\trimap{m}{n}(((a,b),\myv{m}{n})))}. $
   The commutativity
   for $((a,b),\myv{m}{n})$ falling
  into the first case of \eqref{eq:trimapdef} is verified in a very
  similar way.

Next, suppose that $((a,b),\myv{m}{n})$ falls into the third
 case of \eqref{eq:trimapdef}.
Again by writing $((a',b'), \myv{m'}{n'}) := \tp{((a,b),\myv{m}{n})},$
$a = m/n$ implies that $b' = a^{-1} = n/m = m'/n'.$ Then
$((a',b'), \myv{m'}{n'}) \in \Inter{m'}{n'}$ falls into the second case
of \eqref{eq:trimapdef}, implying that
$\trimap{m'}{n'}(((a',b'), \myv{m'}{n'}))=
((a',\min\{c' \in \Garey{m'-1}{n'} : a' < c'\}), \myv{m'-1}{n'} )$.
By undoing the variable substitution,
RHS is $((b^{-1},\min\{c' \in \Garey{n-1}{m} : b^{-1} < c'\}), \myv{n-1}{m} )$,
where $\min\{c' \in \Garey{n-1}{m} : b^{-1} < c'\} =
(\max\{(c')^{-1}: c' \in \Garey{n-1}{m}, b^{-1} < c' \} )^{-1}
= (\max\{c: c \in \Garey{m}{n-1}, b > c \} )^{-1}.$
Thus, when $((a,b),\myv{m}{n})$ falls into the third
case of \eqref{eq:trimapdef}, it holds that
$\trimap{n}{m}( \tp{ ((a,b),\myv{m}{n}) }) =
((b^{-1}, (\max\{c: c \in \Garey{m}{n-1}, b > c \} )^{-1} ),
\myv{n-1}{m})$.
The last expression
coincides with $\tp{(\trimap{m}{n}(((a,b),\myv{m}{n})))}$
  where $\trimap{m}{n}((a,b),\myv{m}{n})$ is
  computed using the
  third case of \eqref{eq:trimapdef}.
Thus, it follows that $\trimap{n}{m}(\tp{((a,b),\myv{m}{n})})
   = \tp{(\trimap{m}{n}(((a,b),\myv{m}{n})))}. $
  A very similar argument
  is applied to the
  second case in
  \eqref{eq:trimapdef}.
\end{proof}

The surjectivity of {the inter-level map is crucial in this study.
\begin{fact}[{\cite[Proposition 4.10]{ranking3}}]\label{fact:invtrimap}
  Let $k \geq 1$ be an integer.
  The map $\trimapdon{k}:\Interdon{k} \to \Interdon{k-1}$ is surjective
  and for nonnegative $m, n$ with $m + n = k-1$,
  the inverse image of a singleton $\{((a,b),\myv{m}{n})\} \subset
  \Interdon{k-1}$ by $\trimapdon{k}$ satisfies the following property.

  When $mn = 0,$ (where only $(a,b) = (0, \infty)$ is possible)
  \begin{equation}
    \trimapdon{k}^{-1}(\{((0,\infty),\myv{m}{n})\})
    =
    \begin{cases}
    \{((0,\infty),\myv{1}{0}), ((0,\infty),\myv{0}{1}) \}
    & (\myv{m}{n} = \myv{0}{0}), \\
    \{((0,\frac{1}{n}),\myv{1}{n}), ((0,\infty),\myv{0}{n+1}) \}      
    & (m = 0 \text{~and~} n \geq 1 ), \\
    \{((0,\infty),\myv{m+1}{0}), ((m,\infty),\myv{m}{1}) \}          
    & (m \geq 1 \text{~and~} n = 0 ). 
    \end{cases}
  \end{equation}    

  When $mn \geq 1,$ it holds that
  $$
  \trimapdon{k}^{-1}(\{((a,b),\myv{m}{n})\}) \subset
  \Inter{m+1}{n} \sqcup \Inter{m}{n+1}
  $$
  with
  \begin{align*}
    |\trimapdon{k}^{-1}(\{((a,b),\myv{m}{n})\}) \cap \Inter{m+1}{n}| & =
    \myChi{a < \frac{m+1}{n}}, \\
    |\trimapdon{k}^{-1}(\{((a,b),\myv{m}{n})\}) \cap \Inter{m}{n+1}| & =
    \myChi{b > \frac{m}{n+1}} 
  \end{align*}
  and that
  \begin{align}
    a < \frac{m+1}{n} & \Rightarrow
    \trimapdon{k}^{-1}(\{((a,b),\myv{m}{n})\}) \cap \Inter{m+1}{n}
    = 
    \begin{cases}
      \{((a,\frac{m+1}{n}),\myv{m+1}{n})\} & (b \geq \frac{m+1}{n}), \\
      \{((a,b),\myv{m+1}{n})\} & (b <    \frac{m+1}{n}), 
    \end{cases} \\
    b > \frac{m}{n+1} & \Rightarrow
        \trimapdon{k}^{-1}(\{((a,b),\myv{m}{n})\}) \cap \Inter{m}{n+1}
    = 
    \begin{cases}
      \{((\frac{m}{n+1},b),\myv{m}{n+1})\} & (a \leq \frac{m}{n+1}), \\
      \{((a,b),\myv{m}{n+1})\} & (a >    \frac{m}{n+1}).
    \end{cases}
  \end{align}
  In other words, (i)
  $\trimapdon{k}^{-1}(\{((a,b),\myv{m}{n})\}) \cap \Inter{m+1}{n}$
  is nonempty if and only if $a < \frac{m+1}{n}$, and when it is nonempty,
  the unique element of it is of the form $((a,*),\myv{m+1}{n})$
  where $*=b\text{\ or\ }\frac{m+1}{n}$. (ii)
  $\trimapdon{k}^{-1}(\{((a,b),\myv{m}{n})\}) \cap \Inter{m}{n+1}$
  is nonempty if and only if $b > \frac{m}{n+1}$, and when it is nonempty,
  the unique element of it is of the form $((*,b),\myv{m}{n+1})$
  where $*=a \text{\ or\ }\frac{m}{n+1}$.
\end{fact}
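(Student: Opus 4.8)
The plan is to invert the definition of $\trimapdon{k}$ directly, one fiber at a time. Since $\trimapdon{k}$ restricts on each $\Inter{m'}{n'}$ with $m'+n'=k$ to $\trimap{m'}{n'}$, and $\trimap{m'}{n'}$ lands in $\Inter{m'-1}{n'}\sqcup\Inter{m'}{n'-1}$, a vertex of $\Interdon{k}$ can be sent to $((a,b),\myv{m}{n})$ (with $m+n=k-1$) only from $\Inter{m+1}{n}$ or $\Inter{m}{n+1}$; this already gives the localization $\trimapdon{k}^{-1}(\{((a,b),\myv{m}{n})\})\subset\Inter{m+1}{n}\sqcup\Inter{m}{n+1}$. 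I would then compute the two summands separately.

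For the $\Inter{m+1}{n}$ part (assuming $mn\ge1$) I would inspect the four cases of \eqref{eq:trimapdef} for $\trimap{m+1}{n}$ and keep only those whose image has horizontal index $m$, namely $b'<\frac{m+1}{n}$ and $b'=\frac{m+1}{n}$, then solve $\trimap{m+1}{n}(((a',b'),\myv{m+1}{n}))=((a,b),\myv{m}{n})$. The first case forces $(a',b')=(a,b)$ and requires $b<\frac{m+1}{n}$, which is admissible because, by the recursive description \cite[Fact~4.2]{ranking3}, $\Gseq{m+1}{n}$ and $\Gseq{m}{n}$ coincide below $\frac{m+1}{n}$, so $(a,b)$ is still an interval there; the second case forces $a'=a$ together with $b=\min\{c\in\Garey{m}{n}:a<c\}$, which holds automatically since $a,b$ are adjacent in $\Gseq{m}{n}$, while the candidate $((a,\frac{m+1}{n}),\myv{m+1}{n})$ is a genuine interval exactly when $a$ is the last element of $\Gseq{m}{n}$ below the newly inserted $\frac{m+1}{n}$, i.e. when $a<\frac{m+1}{n}\le b$. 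The two cases are mutually exclusive, so merging them gives claim~(i): the part is nonempty iff $a<\frac{m+1}{n}$, with unique element $((a,*),\myv{m+1}{n})$, $*=b$ if $b<\frac{m+1}{n}$ and $*=\frac{m+1}{n}$ otherwise.

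For the $\Inter{m}{n+1}$ part I would avoid repeating the argument by transposing. By Definition~\ref{defi:tpfarey} and Proposition~\ref{prop:tptrimap}, $\tp{\cdot}$ carries the fiber over $((a,b),\myv{m}{n})$ to the fiber over $((b^{-1},a^{-1}),\myv{n}{m})$ and interchanges the two summands, so the $\Inter{m}{n+1}$-contribution for $((a,b),\myv{m}{n})$ is the transpose of the $\Inter{n+1}{m}$-contribution for $((b^{-1},a^{-1}),\myv{n}{m})$ already found; since $b^{-1}<\frac{n+1}{m}\Leftrightarrow b>\frac{m}{n+1}$, this yields claim~(ii). Surjectivity then follows from $\frac{m}{n+1}<\frac{m+1}{n}$, which prevents both $a\ge\frac{m+1}{n}$ and $b\le\frac{m}{n+1}$ from holding at once (they would give $a\ge\frac{m+1}{n}>\frac{m}{n+1}\ge b$, contradicting $a<b$); the boundary fibers $mn=0$ are read off directly from the definitions of $\trimap{m}{0}$ and $\trimap{0}{n}$ on the one-element sets $\Inter{m}{0}$ and $\Inter{0}{n}$. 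I expect the adjacency bookkeeping of the middle paragraph to be the main obstacle: everything hinges on locating the inserted fraction $\frac{m+1}{n}$ (resp. $\frac{m}{n+1}$) precisely and knowing it is new, so \cite[Fact~4.2]{ranking3} and the four-way position classification \cite[Fact~4.7]{ranking3} have to be applied carefully to certify that each candidate pair of endpoints is genuinely adjacent in the refined sequence.
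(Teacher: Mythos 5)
The paper does not prove this statement: it is imported verbatim as a Fact from \cite[Proposition 4.10]{ranking3}, and the only in-paper content is the two remarks following it (that $a \leq \frac{m}{n+1} < \frac{m+1}{n} \leq b$ is impossible, and that parts (i) and (ii) are interchanged by the transposition). Your direct inversion of Definition \ref{defi:trimap} is correct and is exactly the argument one would expect the cited reference to contain; moreover, you use precisely the paper's two remarks in the right places --- Proposition \ref{prop:tptrimap} to deduce the $\Inter{m}{n+1}$ half from the $\Inter{m+1}{n}$ half, and the incompatibility of $a \geq \frac{m+1}{n}$ with $b \leq \frac{m}{n+1}$ to get surjectivity. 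The case analysis on the $\Inter{m+1}{n}$ side is sound: only the cases $b' < \frac{m+1}{n}$ and $b' = \frac{m+1}{n}$ of \eqref{eq:trimapdef} can lower the horizontal index, they are mutually exclusive both as conditions on the preimage and as conditions on $(a,b)$, and the adjacency certifications via \cite[Fact 4.2]{ranking3} (that $\Garey{m+1}{n}$ agrees with $\Garey{m}{n}$ below $\frac{m+1}{n}$) are the correct justification that each candidate is genuinely an element of $\Gint{m+1}{n}$. One small imprecision: the boundary fibers with $mn=0$ are not obtained solely from the one-element sets $\Inter{m}{0}$ and $\Inter{0}{n}$; for instance $((m,\infty),\myv{m}{1}) \in \Inter{m}{1}$ and $((0,\frac{1}{n}),\myv{1}{n}) \in \Inter{1}{n}$ lie in non-singleton level sets and must be checked against the general four-case formula (they fall into the $a' = \frac{m'}{n'}$ and $b' = \frac{m'}{n'}$ cases respectively). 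This is routine to fill in and does not affect the validity of the argument.
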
  
Note that $a \leq m/(n+1) < (m+1)/n \leq b$ is impossible because
$a$ and $b$ are adjacent in $\Gseq{m}{n}$ and $m/n$ is in $\Gseq{m}{n}.$
Note also that the statements (i) and (ii) are mutually reducible by considering
the transposition $\tp{((a,b),\myv{m}{n})}=((b^{-1},a^{-1}),\myv{n}{m})$.

The leveled sequence $\Interdon{0}, \Interdon{1}, \Interdon{2}, \ldots$
of the generalized Farey intervals
has the inter-level surjection $\trimapdon{k+1}:\Interdon{k+1}\to\Interdon{k}$
for each level $k=0, 1, \ldots$ as described above
and the smallest level $\Interdon{0}$
is the singleton $\{((0,\infty),\myv{0}{0})\}.$

Thus, the sequence allows a rooted tree structure
where the vertices of level $k$ are the \Th{k} term in the sequence
and the edges are defined by the inter-level maps.
\begin{defi}[{\cite[Sect. 7]{ranking3}}]\label{defi:trimaptree}
  Let $N$ be a nonnegative integer. For the sequence $\Interdon{0}, \Interdon{1}, \Interdon{2}, \ldots, \Interdon{N}$ and their inter-level surjections
  $\trimapdon{k+1}:\Interdon{k+1}\to\Interdon{k} ~(0\leq k < N), $ 
  let  $\mathbb{T}_N^{\Interdon{},\trimapdon{}}$ be the rooted tree 
  such that
  (i) the root is the unique element
$((0,\infty),\myv{0}{0})$
  of $\Interdon{0}$
  (ii)
for $0 \leq k \leq N$,
  the set of the level $k$ vertices  is $\Interdon{k}$
  (iii) for $0 \leq k < k' \leq N,$ vertices $v \in \Interdon{k}$
  and $v' \in \Interdon{k'}$ are adjacent if and only if
  $k' = k + 1$ and $\trimapdon{k'}(v') = v.$ In such an adjacent pair $(v,v')$,
  $v$ is said to be the parent of $v'$ and $v'$ is said to be
  a child of $v$.
\end{defi}

For a given level $k-1$ vertex $v$ of 
$\mathbb{T}_N^{\Interdon{},\trimapdon{}}$ that is not a leaf,
there is the unique $\myv{m}{n}$ such that $m + n = k-1$ and
$v \in \Inter{m}{n}$, then the children of $v$ are
$\trimapdon{k}^{-1}(\{v\})$ as given in Fact \ref{fact:invtrimap}.
Observe that $1 \leq |\trimapdon{k}^{-1}(\{v\})| \leq 2$,
that $|\trimapdon{k}^{-1}(\{v\}) \cap \Inter{m+1}{n}| \leq 1,$
and that $|\trimapdon{k}^{-1}(\{v\}) \cap \Inter{m}{n+1}| \leq 1.$
In other words, $v$ has at least one child, has at most one child
whose horizontal index is larger by one than that of $v$, and
has at most one child whose vertical index is larger by one than
that of $v$. In particular, $\mathbb{T}_N^{\Interdon{},\trimapdon{}}$
is a binary tree that all of the leaves has level $N$.

\subsection{The tree $\mathbb{T}_N^{\Lsetitimonozerodon{},\upmapzerodon{}}$
of the terminal pairs of difference equation type}
\subsubsection{Injective L-shapes of difference equation type}
To describe the vertices of $\mathbb{T}_N^{\Lsetitimonozerodon{},\upmapzerodon{}}$,
we review the definition of an
\textit{injective L-shape of difference equation type} 
\cite{ranking3}.
\begin{defi}[{\cite[Definition 5.5]{ranking3}}]\label{defi:Ldiffeq}
  Let $m$ and $n$ be positive integers. For a map
  $f:([m]\times\{1\})\cup (\{1\}\times[n]) \to[mn]$, let the condition
  $\mathrm{E}_\nyh{m}{n}$ be
\begin{multline*}
  \mathrm{E}_\nyh{m}{n} : 
   \left( f\myv{i+1}{1} - f\myv{i}{1} = \sum_{t \in [n]} \myChi{f\myv{1}{t} \leq f\myv{i+1}{1}} \quad (i \in [m-1])\right) \\
  \wedge
  \left(f\myv{1}{j+1} - f\myv{1}{j} = \sum_{s \in [m]} \myChi{f\myv{s}{1} \leq f\myv{1}{j+1}} \quad (j \in [n-1])\right).
\end{multline*}
\end{defi}  
\begin{exam}\label{exam:Lshape}
  The map $f:([4]\times\{1\}) \cup (\{1\}\times [3]) \to [12]$ defined as
\begin{equation}
  f := \left|
  \begin{matrix}6& & & \\3& & & \\1&2&4&7\\\end{matrix}
\right|,
    \label{eq:Lex43}
\end{equation}
where $\myv{i}{j} = \myv{1}{1}, \myv{4}{1}, \myv{1}{3}$
  correspond to the lower-left, lower-right, upper-left corners respectively,
satisfies
$\mathrm{E}_{\nyh{4}{3}}.$ The first half of the condition
for $\myv{i}{j} = \myv{3}{1}$ is verified as
\begin{align*}
f\myv{3+1}{1} - f\myv{3}{1} & = 7 - 4 = 3, \\
\sum_{t \in [3]} \myChi{f\myv{1}{t} \leq f\myv{3+1}{1}}
 & = \myChi{1 \leq 7} + \myChi{ 3 \leq 7} + \myChi{ 6 \leq 7} = 3.
\end{align*}
The first half of the condition for $\myv{2}{1}, \myv{1}{1}$
can be verified similarly.
Also, the latter half of the condition for $\myv{i}{j}=\myv{1}{2}$ is
satisfied as
\begin{align*}
  f\myv{1}{2+1} - f\myv{1}{2} & = 6 - 3 = 3. \\
  \sum_{s \in [4]} \myChi{f\myv{s}{1} \leq f\myv{1}{2+1}} &=
    \myChi{1 \leq 6} + \myChi{2 \leq 6} + \myChi{4 \leq 6} + \myChi{7 \leq 6}
    = 3.
\end{align*}
The latter half of the condition $\myv{i}{j}=\myv{1}{1}$ also can be
verified.
\end{exam}

The property $E_\nyh{m}{n}$
is inherited from \order $\myv{m}{n}$ to \order
$\myv{m-1}{n}$ or $\myv{m}{n-1}$ as follows.
\begin{fact}[{\cite[Proposition 5.8]{ranking3}}]\label{fact:Litimonorecur}
  Let $m$ and $n$ be positive integers.
  Suppose that a map
  $f: ([m]\times\{1\}) \cup (\{1\}\times[n]) \to[mn]$ satisfies
  $\mathrm{E}_\nyh{m}{n}.$ Then,
  \begin{itemize}
  \item[(i)] $f\myv{m}{1} < f\myv{1}{n}$ implies
    $(n \geq 2) \wedge (f\myv{1}{n} = f\myv{1}{n-1} + m) \wedge
    (f|_{([m]\times\{1\})\cup(\{1\}\times[n-1])} \text{\ satisfies \ }E_\nyh{m}{n-1})$.
    \medskip
  \item[(ii)] $f\myv{m}{1} > f\myv{1}{n}$ implies
    $(m \geq 2) \wedge (f\myv{m}{1} = f\myv{m-1}{1} + n) \wedge
    (f|_{([m-1]\times\{1\})\cup(\{1\}\times[n])} \text{\ satisfies \ }E_\nyh{m-1}{n})$.
  \end{itemize}    
\end{fact}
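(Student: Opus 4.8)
The plan is to prove (i) in detail and to deduce (ii) from it by the transpose symmetry built into $\mathrm{E}_\nyh{m}{n}$. Define $\hat f$ on $([n]\times\{1\})\cup(\{1\}\times[m])$ by $\hat f\myv{i}{1} := f\myv{1}{i}$ and $\hat f\myv{1}{j} := f\myv{j}{1}$; this interchanges the two conjuncts of the condition, so $\hat f$ satisfies $\mathrm{E}_\nyh{n}{m}$, and it swaps the terminal values $f\myv{m}{1}$ and $f\myv{1}{n}$, so that, with the roles of $m$ and $n$ exchanged, statement (ii) for $f$ becomes statement (i) for $\hat f$. From here I treat (i), writing $r_i := f\myv{i}{1}$ for $i \in [m]$ and $c_j := f\myv{1}{j}$ for $j \in [n]$, with $r_1 = c_1 = f\myv{1}{1}$.

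The first and, in my view, load-bearing step is strict monotonicity of both arms, which I would isolate as a preliminary before invoking any hypothesis. Each row increment equals $r_{i+1} - r_i = \sum_{t\in[n]}\myChi{c_t \le r_{i+1}} \ge 0$, so the row is weakly increasing; but then the $t = 1$ summand $\myChi{c_1 \le r_{i+1}} = \myChi{r_1 \le r_{i+1}}$ is forced to be $1$, whence $r_{i+1} - r_i \ge 1$ and the row is strictly increasing. The same reasoning on the column conjunct gives $c_1 < \cdots < c_n$. Note that this uses only $\mathrm{E}_\nyh{m}{n}$, not injectivity.

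Now assume $r_m < c_n$. If $n = 1$ then $c_n = c_1 = r_1 \le r_m$, contradicting the hypothesis, so $n \ge 2$. Evaluating the column conjunct at $j = n-1$ gives $c_n - c_{n-1} = \sum_{s\in[m]}\myChi{r_s \le c_n}$; since $r_s \le r_m < c_n$ for every $s \in [m]$, all $m$ summands equal $1$, so $f\myv{1}{n} = c_{n-1} + m = f\myv{1}{n-1} + m$. Finally, for the restriction $g := f|_{([m]\times\{1\})\cup(\{1\}\times[n-1])}$: its column conjunct (ranging over $j \in [n-2]$) is a verbatim subfamily of that of $f$, and for each $i \in [m-1]$ its row equation differs from that of $f$ only in the dropped $t = n$ term $\myChi{c_n \le r_{i+1}}$, which is $0$ because $r_{i+1} \le r_m < c_n$; hence both conjuncts of $\mathrm{E}_\nyh{m}{n-1}$ hold for $g$. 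This establishes (i), and the transpose yields (ii).

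The only genuine subtlety I anticipate concerns the codomain: the restricted map $g$ need not land in $[m(n-1)]$ (an upward translate of a small valid L-shape already produces a value exceeding $m(n-1)$, since $\mathrm{E}_\nyh{m}{n}$ is translation invariant), so I would read the conclusion ``$g$ satisfies $\mathrm{E}_\nyh{m}{n-1}$'' as the assertion that the two difference equations hold, which is precisely what the inter-level recursion of the tree consumes, rather than as a claim about the literal target $[m(n-1)]$. Apart from this point of interpretation there is no hard step: the argument is a direct computation resting entirely on the monotonicity established at the outset.
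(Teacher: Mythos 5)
The paper does not prove this statement: it is imported verbatim as a Fact from \cite[Proposition 5.8]{ranking3} and only illustrated by the worked example that follows it, so there is no in-paper proof to compare against. Your argument is a correct and self-contained derivation. The two load-bearing observations are exactly right: first, that each increment $f\myv{i+1}{1}-f\myv{i}{1}$ is a sum of Iverson brackets and hence nonnegative, and that the $t=1$ summand $\myChi{f\myv{1}{1}\leq f\myv{i+1}{1}}$ is then forced to equal $1$, giving strict increase along both arms without using injectivity; second, that under $f\myv{m}{1}<f\myv{1}{n}$ every $f\myv{s}{1}$ lies strictly below $f\myv{1}{n}$, which simultaneously evaluates the column equation at $j=n-1$ to $m$ and shows that the dropped $t=n$ term in each row equation of the restriction contributes $0$. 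The reduction of (ii) to (i) via the transposition is legitimate and is consistent with how the paper itself exploits the same symmetry elsewhere (Definition \ref{defi:Ltp} and the transposition arguments in Propositions \ref{prop:tptrimap} and \ref{prop:tpupmap}).

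Your closing caveat about the codomain is a genuine observation rather than a flaw in your proof: since $\mathrm{E}_\nyh{m}{n}$ is translation invariant and the Fact as stated assumes neither injectivity nor $f\myv{1}{1}=1$, the restriction need not take values in $[m(n-1)]$ (e.g.\ $m=n=2$ with row values $2,3$ and column values $2,4$ satisfies $\mathrm{E}_\nyh{2}{2}$ but its row restriction takes the value $3\notin[2]$). Reading ``satisfies $E_\nyh{m}{n-1}$'' as the assertion that the two systems of difference equations hold is the intended interpretation, and it is all that the applications in Sections \ref{sss:diffeqtype} and \ref{sss:upmaptree} require, where in any case $f$ is drawn from $\Lsetitimono{m}{n}$.
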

\begin{exam}
The $f$ appeared as \eqref{eq:Lex43} in Example \ref{exam:Lshape}
  that satisfies
$\mathrm{E}_\nyh{4}{3}$ falls into the case (ii) by
$7 = f\myv{4}{1} > 6 = f\myv{1}{3}.$  Indeed,
$f\myv{4}{1} = 7$ equals $f\myv{4-1}{1} + 3 = 4 + 3$. The restriction
to $([4-1]\times\{1\})\cup(\{1\}\times[3])$ is
\begin{equation}
  f' = \left|%
    \begin{matrix}6& &  \\3& &  \\1&2&4\\
    \end{matrix}\right|.
\label{eq:Lex33}
\end{equation}
Noting that $f'$ is just a restriction $f|_{([3]\times\{1\})\cup(\{1\}\times[3])}$
of $f$,
the first half of the condition $\mathrm{E}_\nyh{3}{3}$
for $f'$ is the first half of the condition
of $\mathrm{E}_\nyh{4}{3}$ restricted to $i = 1, 2.$
The second half of $\mathrm{E}_\nyh{3}{3}$ is verified as follows.
For $j = 2,$
\begin{align*}
  f\myv{1}{2+1} - f\myv{1}{2} & = 6 - 3 = 3. \\
  \sum_{s \in [3]} \myChi{f\myv{s}{1} \leq f\myv{1}{2+1}} &=
    \myChi{1 \leq 6} + \myChi{2 \leq 6} + \myChi{4 \leq 6} 
    = 3.
\end{align*}
For $j = 1,$
\begin{align*}
  f\myv{1}{1+1} - f\myv{1}{1} & = 3 - 1 = 2. \\
  \sum_{s \in [3]} \myChi{f\myv{s}{1} \leq f\myv{1}{j+1}} &= 
    \myChi{1 \leq 3} + \myChi{2 \leq 3} + \myChi{4 \leq 3} 
     = 2.
\end{align*}
Thus, $f'$ satisfies $E_\nyh{3}{3}$.
\end{exam}
\begin{defi}[{\cite[Definition 5.11]{ranking3}}]\label{defi:Lde}
  Let $m$ and $n$ be positive integers. An element of the set
  $$
  \Lsetitimono{m}{n} := \{ f: \ f:([m]\times\{1\}) \cup (\{1\}\times[n]) \to[mn]
  \text{\ is an injection satisfying\ } \mathrm{E}_\nyh{m}{n}
  \text{\ and\ } f\myv{1}{1} = 1 \}
  $$
  is said to be an \textit{injective L-shape of difference equation type}
  of \order $\myv{m}{n}$. For nonnegative integers $\myv{m}{n}$ with $mn=0$,
  put formally $\Lsetitimono{m}{n} = \emptyset$.
\end{defi}
As was the case for the generalized Farey intervals $\Inter{m}{n}$,
injective L-shapes allow a naturally defined
transposition induced by the swap of the
horizontal and vertical indices.
\begin{defi}\label{defi:Ltp}
  Let $m$ and $n$ be positive integers and let $f$ be
  an injective L-shape of \order $\myv{m}{n}$.
  The transposition $\tp{f}$ for L-shape  $f$ 
  is the injective L-shape
  $\tp{f}:([n] \times \{1\}) \cup (\{1\} \times [m]) \ni \myv{j}{i} \mapsto f\myv{i}{j} \in [mn] $ of \order $\myv{n}{m}$.
  It is easily checked that
  $f \in \Lsetitimono{m}{n}$ if and only if $\tp{f} \in \Lsetitimono{n}{m}$.
\end{defi}
\begin{exam}
  For the injective L-shape $f \in \Lsetitimono{4}{3}$
    defined in \eqref{eq:Lex43} of Example \ref{exam:Lshape}, its transposition
    is 
\begin{equation}
  \tp{f} = \left|
    \begin{matrix}7 & &  \\4& &  \\2& & \\ 1 & 3 & 6\end{matrix}
\right|,
\label{eq:Lex43t}
\end{equation}
an injective L-shape of \order $\myv{3}{4}$.
\end{exam}

\subsubsection{Level $k$ vertices of the terminal pairs of difference equation type }\label{sss:diffeqtype}
Fact \ref{fact:Litimonorecur}
in particular asserts that,
given the terminal pair $f\myv{m}{1}$ and $f\myv{1}{n}$
of $f \in \Lsetitimono{m}{n}$,
we obtain either the the value $f\myv{m-1}{n}$
of $f|_{([m-1]\times\{1\}) \cup (\{1\}\times[n])} \in \Lsetitimono{m-1}{n}$ 
or
the the value $f\myv{m}{n-1}$
of $f|_{([m]\times\{1\}) \cup (\{1\}\times[n-1])} \in \Lsetitimono{m}{n-1}$.
Repeating this,
one can easily determine all the terms of $f$.
In other words, by Fact \ref{fact:Litimonorecur},
an injective L-shape $f$ of difference equation type
of \order $\myv{m}{n}$
can be
``compressed'' to its terminal pair
$(\nyh{f\myv{m}{1}}{f\myv{1}{n}},\myv{m}{n})$,
tagged with the \order $\myv{m}{n}$ of the original L-shape,
without loss of the information.} Thus, the following set of
the terminal pairs
is essentially identical to
the set $\Lsetitimono{m}{n}.$
\begin{defi}[{\cite[Notation 6.2]{ranking3}}]\label{defi:termdiff}
  Let $m$ and $n$ be positive integers. An element of the set
  $$
  \Lsetitimonozero{m}{n} := \{ (f\myv{m}{1}, f\myv{1}{n}, \myv{m}{n} )
   : f \in \Lsetitimono{m}{n} 
  \}
  $$
  is said to be a \textit{terminal pair of difference equation type} of \order
  $\myv{m}{n}$.
\end{defi}
The level $k$ vertices of the tree
$\mathbb{T}_N^{\Lsetitimonozerodon{},\upmapzerodon{}}$
are defined to be the union of $\Lsetitimonozero{m}{n}$
with $m + n = k + 2$.
\begin{defi}[{\cite[Notation 6.3]{ranking3}}]
For $k \geq 0$, let
\begin{align*}
\Lsetitimonozerodon{k + 2} & := \bigsqcup_{\begin{smallmatrix}m, n\geq 1 \\ m + n = k+2\end{smallmatrix}} \Lsetitimonozero{m}{n}. 
\end{align*}
\end{defi}
Also over $\Lsetitimonozero{m}{n}$,
the \textit{transposition for terminal pairs},
that is consistent
with the transposition
$\tp{(\cdot)} : \Lsetitimono{m}{n} \rightarrow \Lsetitimono{n}{m}$
for
injective L-shapes of difference equation type (Definition \ref{defi:Ltp}),
is defined as follows:
\begin{defi}\label{defi:tpterm}
  For a terminal pair $(f\myv{m}{1}, f\myv{1}{n}, \myv{m}{n})
  \in \Lsetitimonozero{m}{n},$ its transposition
  $\tp{(f\myv{m}{1}, f\myv{1}{n}, \myv{m}{n})}$
  is defined as
  $(f\myv{1}{n}, f\myv{m}{1}, \myv{n}{m}) \in \Lsetitimonozero{n}{m}.$ 
  This induces $\tp{(\cdot)}: \Lsetitimonozerodon{k} \to \Lsetitimonozerodon{k}$
  in an obvious way.
\end{defi}

 For example, the corresponding terminal pair (indexed with $\myv{m}{n}$)
 of $f$ of \eqref{eq:Lex43} is $(7,6,\myv{4}{3}) \in \Lsetitimonozero{4}{3}$ and its transposition
 is $(6,7,\myv{3}{4}) \in \Lsetitimonozero{3}{4}$ which corresponds to
 $\tp{f}$ of \eqref{eq:Lex43t}.

\subsubsection{The edges of $\mathbb{T}_N^{\Lsetitimonozerodon{},\upmapzerodon{}}$}
 \label{sss:upmaptree}
As stated in Fact \ref{fact:Litimonorecur}, given
the terminal pair
$(\nyh{f\myv{m}{1}}{f\myv{1}{n}},\myv{m}{n})$
of an injective L-shape $f \in \Lsetitimono{m}{n}$ of difference equation type,
either of $f\myv{1}{n-1}$ or $f\myv{m-1}{1}$ is readily computed
depending on $f\myv{m}{1} < f\myv{1}{n}$ or $f\myv{m}{1} > f\myv{1}{n}$.
In other words, depending on
$f\myv{m}{1} < f\myv{1}{n}$ or $f\myv{m}{1} > f\myv{1}{n}$,
the terminal pair of the restriction
$f|_{([m]\times\{1\})\cup(\{1\}\times[n-1])} \in \Lsetitimono{m}{n-1}$ or
$f|_{([m-1]\times\{1\})\cup(\{1\}\times[n])} \in \Lsetitimono{m-1}{n}$ is defined.
Thus, the following inter-level map for the terminal pairs of difference
equation type is defined.
\begin{defi}[{\cite[Definitions 6.6, 6.7]{ranking3}}]\label{defi:upmap}
  Let $m, n$ be positive integers with $m+n \geq 3$. Define the map
  $\upmapzero{m}{n}: \Lsetitimonozero{m}{n} \to \Lsetitimonozero{m-1}{n}
   \sqcup
   \Lsetitimonozero{m}{n-1}$ by
   \begin{equation}
   \upmapzero{m}{n}((s, t,\myv{m}{n})) :=
   \begin{cases}
     (s - n, t, \myv{m-1}{n}) & (s > t), \\
     (s, t - m, \myv{m}{n-1}) & (s < t).
   \end{cases}
   \label{eq:upmapzero}
   \end{equation}
   (Note that this is well-defined when $n = 1$ because
   in this case $m$ satisfies $m \geq 2$ and
   $(s, t,\myv{m}{1}) \in \Lsetitimonozero{m}{1}$ forces
   $t = 1$, thus $s > t$
   and $\upmapzero{m}{1}((s, t,\myv{m}{1})) = (s - 1, t, \myv{m-1}{n})
   \in  \Lsetitimonozero{m-1}{1}$. Also, it is well-defined
   when $m=1$ and
   $\upmapzero{1}{n}((s, t,\myv{1}{n})) = (s, t-1, \myv{1}{n-1})
   \in  \Lsetitimonozero{1}{n-1}$.)
   Also, for $k \geq 3,$ define the map
   $\upmapzerodon{k}: \Lsetitimonozerodon{k} \rightarrow
   \Lsetitimonozerodon{k-1}$ as follows: Given
   $z \in \Lsetitimonozerodon{k}, $ there exist the unique 
   $s, t$ and  $\myv{m}{n}$
    such that
   $z = (s, t, \myv{m}{n})$. For this $(s, t, \myv{m}{n}),$
   let $\upmapzerodon{k}(z) := \upmapzero{m}{n}((s,t,\myv{m}{n})).$
\end{defi}  
Note that %
  the map $\upmapzerodon{k}$ and the transposition
  for terminal pairs
  commute:
\begin{prop}\label{prop:tpupmap}
  For $k \geq 3$ and $z \in \Lsetitimonozerodon{k},$ it holds that
  $$
  \upmapzerodon{k}(\tp{z}) = \tp{(\upmapzerodon{k}(z))}.
  $$
\end{prop}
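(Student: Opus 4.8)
The plan is to unwind both sides down to the component map $\upmapzero{m}{n}$ and then verify the identity by a short two-case analysis, the two cases being precisely the ones interchanged by the transposition. Fix $z = (s, t, \myv{m}{n}) \in \Lsetitimonozero{m}{n}$ with $m + n = k \geq 3$, so that $\tp{z} = (t, s, \myv{n}{m}) \in \Lsetitimonozero{n}{m}$. First I would record that $s \neq t$: since $m + n \geq 3$ forces $(m,n) \neq \myv{1}{1}$, the two positions $\myv{m}{1}$ and $\myv{1}{n}$ are distinct, so injectivity of the underlying L-shape $f$ gives $s = f\myv{m}{1} \neq f\myv{1}{n} = t$. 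Hence exactly one of the two lines of \eqref{eq:upmapzero} applies to $z$, and likewise to $\tp{z}$.

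Next I would treat the case $s > t$. Here $\upmapzerodon{k}(z) = \upmapzero{m}{n}(z) = (s - n, t, \myv{m-1}{n})$, whose transposition is $(t, s - n, \myv{n}{m-1})$. On the other hand, in $\tp{z} = (t, s, \myv{n}{m})$ the first coordinate $t$ is strictly smaller than the second coordinate $s$, so the second line of \eqref{eq:upmapzero} is selected for $\upmapzero{n}{m}$; with the role assignment $(s',t',\myv{m'}{n'}) = (t, s, \myv{n}{m})$ this yields $(s', t' - m', \myv{m'}{n'-1}) = (t, s - n, \myv{n}{m-1})$. The two results coincide, giving $\upmapzerodon{k}(\tp{z}) = \tp{(\upmapzerodon{k}(z))}$. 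The case $s < t$ is handled symmetrically (equivalently, by applying the previous computation to $\tp{z}$ in place of $z$): now $\upmapzerodon{k}(z) = (s, t - m, \myv{m}{n-1})$ with transposition $(t - m, s, \myv{n-1}{m})$, while $\tp{z}$ has first coordinate $t > s$, so the first line of \eqref{eq:upmapzero} gives $\upmapzero{n}{m}(\tp{z}) = (t - m, s, \myv{n-1}{m})$, again matching.

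The argument is essentially bookkeeping, and I expect no genuine obstacle beyond keeping the simultaneous swap $(s, t, m, n) \mapsto (t, s, n, m)$ straight: the content of the statement is exactly that the transposition exchanges the two branches of \eqref{eq:upmapzero} while interchanging the subtracted amount ($n \leftrightarrow m$) and the decremented index, so that both sides land on the same triple. The only point requiring a word of care is the degenerate boundary $m = 1$ (forcing $s = 1 < t$) or $n = 1$ (forcing $t = 1 < s$); these fall into the $s < t$ and $s > t$ cases respectively, and the well-definedness already noted in Definition \ref{defi:upmap} guarantees that the images of both $z$ and $\tp{z}$ remain in the appropriate $\Lsetitimonozero{m-1}{n}$ or $\Lsetitimonozero{m}{n-1}$, so the identity persists there as well. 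Passing from $\upmapzero{m}{n}$ to $\upmapzerodon{k}$ and from the L-shape transposition to the terminal-pair transposition is immediate from their definitions, which completes the verification.
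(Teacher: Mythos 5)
Your proposal is correct and follows essentially the same route as the paper's own proof: a direct two-case computation on the branches of \eqref{eq:upmapzero}, observing that the transposition swaps the two cases and that the resulting triples agree. The extra remarks on $s \neq t$ and the boundary cases $m=1$, $n=1$ are harmless additions the paper leaves implicit.
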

\begin{proof}
  It is enough to show that
  $\tp{(\upmapzero{m}{n}((s,t,\myv{m}{n})))} = \upmapzero{n}{m}(\tp{(s,t,\myv{m}{n})})$
  for each $\myv{m}{n}$.
  Suppose that
  $(s, t, \myv{m}{n})$ falls into the first case of \eqref{eq:upmapzero}
  due to
  $s > t$. Then it holds that $\upmapzero{m}{n}(s,t,\myv{m}{n}) =
  (s - n, t, \myv{m-1}{n})$ and that
  $\tp{(\upmapzero{m}{n}((s,t,\myv{m}{n})))} = (t, s-n, \myv{n}{m-1})$.
  On the other hand, $(s',t',\myv{m'}{n'}) := \tp{(s,t,\myv{m}{n})}$
  satisfies $s' - t' = t - s < 0$ and falls into the second case
  of \eqref{eq:upmapzero}, implying that
  $\upmapzero{n}{m}(\tp{(s,t,\myv{m}{n})}) =
  \upmapzero{n}{m}((s',t',\myv{m'}{n'})) = (s', t'-m', \myv{m'}{n'-1})
  = (t, s-n, \myv{n}{m-1}).$ Thus, 
  $\tp{(\upmapzero{m}{n}((s,t,\myv{m}{n})))} = \upmapzero{n}{m}(\tp{(s,t,\myv{m}{n})})$
   holds. The argument for the case $s < t$ is similar.
\end{proof}

Inverting $\upmapzerodon{k}$ yields the rule of the
child-generation described in Section \ref{sect:intro}, as follows.
  \begin{fact}[{\cite[Proposition 6.9]{ranking3}}]\label{fact:invupmap}
  It holds that $\Lsetitimonozerodon{2} = \{((1,1,\myv{1}{1})\}$.
  For an integer $k \geq 3,$ the map
  $\upmapzerodon{k}: \Lsetitimonozerodon{k} \rightarrow
  \Lsetitimonozerodon{k-1}$ is surjective and the
  inverse image of a singleton
  $\{(s,t,\myv{m}{n})\} \in \Lsetitimonozero{m}{n}
  \ (\text{where\ } m + n = k - 1)$
  by it satisfies
  the following property. It holds that
  $$
  \upmapzerodon{k}^{-1}(\{(s,t,\myv{m}{n})\}) \subset
  \Lsetitimonozero{m+1}{n} \sqcup \Lsetitimonozero{m}{n+1},
  $$
  that
  \begin{align*}
    |\upmapzerodon{k}^{-1}(\{(s,t,\myv{m}{n})\}) \cap
    \Lsetitimonozero{m+1}{n}| & = 
    \myChi{s - t > -n}, \\
    |\upmapzerodon{k}^{-1}(\{(s,t,\myv{m}{n})\}) \cap
    \Lsetitimonozero{m}{n+1}| & = 
    \myChi{s -t < m }, 
  \end{align*}
and that  
\begin{align*}
  s - t > -n & \Rightarrow
  \upmapzerodon{k}^{-1}(\{(s,t,\myv{m}{n})\}) \cap
  \Lsetitimonozero{m+1}{n} = \{ (s+n, t, \myv{m+1}{n})\}, \\
  s - t < m & \Rightarrow
  \upmapzerodon{k}^{-1}(\{(s,t,\myv{m}{n})\}) \cap
    \Lsetitimonozero{m}{n+1} = \{ (s, t + m, \myv{m}{n+1})\}. \\
\end{align*}
Note that the cases $s-t > -n$ and $s-t < m$ are mutually reducible
by the transposition $\tp{(s,t,\myv{m}{n})}=(t,s,\myv{n}{m})$
for terminal pairs.
\end{fact}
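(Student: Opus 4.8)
The plan is to reduce the whole statement to a single explicit operation on injective L-shapes---appending one new corner to the bottom row---and to exploit two things already available: the compression correspondence $\Lsetitimono{m}{n}\cong\Lsetitimonozero{m}{n}$ underlying Definition~\ref{defi:termdiff}, under which $\upmapzero{m}{n}$ is nothing but the restriction map of Fact~\ref{fact:Litimonorecur} read on terminal pairs, and the commutation with transposition from Proposition~\ref{prop:tpupmap}. Concretely I would proceed in five steps: (a) settle the base level $\Lsetitimonozerodon{2}$ by hand; (b) read off from the defining formula \eqref{eq:upmapzero} the only possible shapes of a preimage; (c) prove the horizontal candidate is a genuine terminal pair by extending the associated L-shape; (d) obtain the vertical candidate for free by transposition; and (e) deduce surjectivity from one elementary inequality.

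For (a), the only $\myv{m}{n}$ with $m,n\geq1$ and $m+n=2$ is $\myv{1}{1}$, and $\Lsetitimono{1}{1}$ consists of the single map with $f\myv{1}{1}=1$ (the condition $\mathrm{E}_\nyh{1}{1}$ being vacuous), whose terminal pair is $(1,1,\myv{1}{1})$; hence $\Lsetitimonozerodon{2}=\{(1,1,\myv{1}{1})\}$. For (b), fix $(s,t,\myv{m}{n})\in\Lsetitimonozero{m}{n}$ with $m+n=k-1$ and let $(s',t',\myv{m'}{n'})$ be any preimage under $\upmapzerodon{k}$. Since \eqref{eq:upmapzero} lowers exactly one index by one, $\myv{m'}{n'}$ is $\myv{m+1}{n}$ or $\myv{m}{n+1}$, giving at once $\upmapzerodon{k}^{-1}(\{(s,t,\myv{m}{n})\})\subset\Lsetitimonozero{m+1}{n}\sqcup\Lsetitimonozero{m}{n+1}$. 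In the first case the preimage must come from the branch $s'>t'$, forcing $(s',t')=(s+n,t)$ with branch condition $s-t>-n$; in the second it must come from $s'<t'$, forcing $(s',t')=(s,t+m)$ with condition $s-t<m$. This pins the candidates down uniquely and shows each target has at most one preimage in each of the two sets.

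The heart of the matter is (c): showing that $(s+n,t,\myv{m+1}{n})$ really lies in $\Lsetitimonozero{m+1}{n}$ whenever $s-t>-n$. Let $f\in\Lsetitimono{m}{n}$ realize $(s,t)$, so $f\myv{m}{1}=s$ and $f\myv{1}{n}=t$, and define $g$ on $([m+1]\times\{1\})\cup(\{1\}\times[n])$ to agree with $f$ on the old domain and to have $g\myv{m+1}{1}:=s+n$. Because $\mathrm{E}_\nyh{m}{n}$ forces both the bottom row and the top column to increase strictly, $f\myv{1}{n}=t$ is the largest top-column value and $s=f\myv{m}{1}$ the largest bottom-row value; and the hypothesis $s-t>-n$ is exactly $s+n>t$, so $s+n$ strictly exceeds every value of $f$. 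This one observation does all the work: it makes $g$ injective, keeps the range inside $[(m+1)n]$ because $s+n\leq mn+n=(m+1)n$, makes the new bottom-row gap $g\myv{m+1}{1}-g\myv{m}{1}=n$ match $\sum_{t'\in[n]}\myChi{g\myv{1}{t'}\leq g\myv{m+1}{1}}=n$ demanded by the first half of $\mathrm{E}_\nyh{m+1}{n}$, and leaves each top-column equation of the second half untouched since the extra term $\myChi{g\myv{m+1}{1}\leq g\myv{1}{j+1}}$ vanishes. Hence $g\in\Lsetitimono{m+1}{n}$ with terminal pair $(s+n,t,\myv{m+1}{n})$. Conversely, if $s-t\leq-n$ the branch condition $s'>t'$ from (b) fails, so there is no preimage in $\Lsetitimonozero{m+1}{n}$. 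I expect (c) to be the only genuine obstacle: everything else is bookkeeping, and the delicate point is precisely the simultaneous control of injectivity, range, and both halves of $\mathrm{E}_\nyh{m+1}{n}$ by the single inequality $s+n>t$.

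For (d), the vertical case is the mirror image. By Proposition~\ref{prop:tpupmap} the transposition commutes with $\upmapzerodon{k}$, and since $\tp{(\cdot)}$ is an involution sending $(s,t,\myv{m}{n})$ to $(t,s,\myv{n}{m})$ and exchanging $\Lsetitimonozero{m}{n+1}$ with $\Lsetitimonozero{n+1}{m}$, applying the horizontal result to $(t,s,\myv{n}{m})$ and transposing the outcome back gives the claim about $\Lsetitimonozero{m}{n+1}$: the preimage is $\{(s,t+m,\myv{m}{n+1})\}$ and the condition $t-s>-m$ becomes $s-t<m$. Finally, for (e), whenever $m+n=k-1\geq2$ the two conditions $s-t>-n$ and $s-t<m$ cannot both fail, for that would force $m\leq s-t\leq-n$ and hence $m+n\leq0$; thus every element of $\Lsetitimonozerodon{k-1}$ has at least one preimage, so $\upmapzerodon{k}$ is surjective. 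Collecting (a)--(e) yields all the assertions of the statement.
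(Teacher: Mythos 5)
The paper does not prove this statement at all: it is imported verbatim as Fact~\ref{fact:invupmap}, citing \cite[Proposition 6.9]{ranking3}, so there is no in-paper argument to compare yours against. Judged on its own, your proof is correct and complete. Step (b) correctly pins down the only possible preimages directly from \eqref{eq:upmapzero}, and the crucial existence step (c) is sound: the strict monotonicity of both legs of the L-shape (each difference in $\mathrm{E}_\nyh{m}{n}$ is at least $1$ because $f\myv{1}{1}=1$ is always counted) makes $s$ and $t$ the maxima of their legs, so the single inequality $s+n>t$ does indeed give injectivity of the extension $g$, the containment of $s+n$ in $[(m+1)n]$, the new row equation $g\myv{m+1}{1}-g\myv{m}{1}=n$, and the vanishing of the extra summand in every column equation. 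The reduction of the vertical case via Proposition~\ref{prop:tpupmap} and the surjectivity argument from $m\leq s-t\leq -n$ being impossible are both fine. Your argument is a legitimate self-contained replacement for the external citation; the only thing to flag is that it silently uses (and correctly, via injectivity for $m+n\geq 3$) that $s'\neq t'$ for any candidate preimage, so that \eqref{eq:upmapzero} is always applicable.
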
  

We have seen that the sequence $\Lsetitimonozerodon{2}, \Lsetitimonozerodon{3},
 \ldots$ of the injective terminal pairs of
difference equation type has the inter-level surjections
$\upmapzerodon{k+3}: \Lsetitimonozerodon{k+3} \to \Lsetitimonozerodon{k+2}$
for $k=0, 1, 2, \ldots$ and has the singleton $\{ (1,1,\myv{1}{1}) \}$
as the smallest level
$\Lsetitimonozerodon{2}$. This defines the adjacency of the tree
$\mathbb{T}_N^{\Lsetitimonozerodon{},\upmapzerodon{}}$.
\begin{defi}[{\cite[Sect. 7]{ranking3}}]\label{defi:upmaptree}
  Let $N$ be a nonnegative integer. For the sequence
  $\Lsetitimonozerodon{2}, \Lsetitimonozerodon{3}, \ldots,
  \Lsetitimonozerodon{N+2}$
  and their inter-level surjections
  $\upmapzerodon{k+3}:\Lsetitimonozerodon{k+3}\to\Lsetitimonozerodon{k+2} ~(0\leq k < N), $ 
  let  $\mathbb{T}_N^{\Lsetitimonozerodon{},\upmapzerodon{}}$ be the rooted tree 
  such that
  (i) the root is the unique element
$(1,1,\myv{1}{1})$
  of $\Lsetitimonozerodon{2}$
  (ii)
  for $0 \leq k \leq N$,
  the set of the level $k$ vertices is $\Lsetitimonozerodon{k+2}$
  (iii) for $0 \leq k < k' \leq N,$ vertices $v \in \Lsetitimonozerodon{k+2}$
  and $v' \in \Lsetitimonozerodon{k'+2}$ are adjacent if and only if
  $k' = k + 1$ and $\upmapzerodon{k'+2}(v') = v.$
In such an adjacent pair $(v,v')$,
$v$ is said to be the parent of $v'$ and $v'$ is said to be
  a child of $v$.
\end{defi}

For a given level $k-1$ vertex $v$ of 
$\mathbb{T}_N^{\Lsetitimonozerodon{},\upmapzerodon{}}$ that is not a leaf,
there is the unique $\myv{m}{n}$ such that $m + n = k+1$ and
$v \in \Lsetitimonozero{m}{n}$.
Then the children of $v$ are 
$\upmapzerodon{k+2}^{-1}(\{v\})$ as given in Fact \ref{fact:invupmap}.
Observe that $1 \leq |\upmapzerodon{k+2}^{-1}(\{v\})| \leq 2$,
that $|\upmapzerodon{k+2}^{-1}(\{v\}) \cap \Lsetitimonozero{m+1}{n}| \leq 1,$
and that $|\upmapzerodon{k+2}^{-1}(\{v\}) \cap \Lsetitimonozero{m}{n+1}|
\leq 1.$
In other words, $v$ has at least one child, has at most one child
whose horizontal index is larger by one than that of $v$, and
has at most one child whose vertical index is larger by one than
that of $v$. In particular, $\mathbb{T}_N^{\Lsetitimonozerodon{},\upmapzerodon{}}$
is a binary tree that all of the leaves has level $N$. 

\subsection{The tree $\mathbb{T}_N^{\TYngdon{},\phidon{}}$ of the Young terminal pairs}
The third tree $\mathbb{T}_N^{\TYngdon{},\phidon{}}$
is seen as an intermediate object between
$\mathbb{T}_N^{\Interdon{},\trimapdon{}}$
and $\mathbb{T}_N^{\Lsetitimonozerodon{},\upmapzerodon{}}$ in a sense.

\subsubsection{Young ranking tables and L-shapes}
To define the set of vertices of the tree, we start with
recalling the definition of \textit{Young ranking tables}
which are a kind of well-known Young tableaux having a rectangular
Ferrers diagram and a parametrization by a positive real number.
\begin{defi}[{\cite[Definition 4.13]{ranking3}}]\label{defi:yr}
  Let $m, n$ be positive integers and $\xi>0$ be a real number.
  A map $\tau_\nyh{1}{\xi}^\nyh{m}{n}: [m] \times [n] \to [mn]$
  defined by
\begin{equation}
  \tau_\nyh{1}{\xi}^{ \nyh{m}{n}} \myv{i}{j}
  := \sum_{s \in [m]} \sum_{t \in [n]}
  \myChi{s + t \xi \leq i + j \xi},
  \quad \myv{i}{j} \in [m] \times [n] \label{eq:taudefxi}
\end{equation}
is said to be a \textit{Young ranking table} of \order $\myv{m}{n}$
if it is injective. The collection of the Young ranking tables
of \order $\myv{m}{n}$ is denoted by $\Yng{m}{n}.$
\end{defi}
We note that RHS of \eqref{eq:taudefxi}
  is interpreted as the \textit{ranking} of the element
  $i + j\xi$ among the set $\{s + t\xi : \myv{s}{t} \in [m] \times [n]\}$
  by the standard order of the real numbers.

\begin{defi}[{\cite[Notation 7.2]{ranking3}}]    
For positive integers $m$ and $n$,
let
$$
\LYng{m}{n} = \{ \tau|_{([m]\times\{1\})\cup(\{1\}\times[n])} : \tau \in \Yng{m}{n}\}
$$
be the collection of the resulting injective L-shapes
  by restricting all the Young ranking tables to the set
  $([m]\times\{1\})\cup(\{1\}\times[n])$.
\end{defi}
Important facts are
that $\LYng{m}{n}$ can be identified with $\Yng{m}{n}$ and
that it is included in $\Lsetitimono{m}{n}$.
\begin{fact}[{\cite[Corollary 7.5]{ranking3}}]\label{fact:YandLY}
  The restriction
  $\Yng{m}{n} \ni \tau \mapsto \tau|_{([m]\times\{1\})\cup(\{1\}\times[n])}
  \in \LYng{m}{n}
  $ is bijective.
\end{fact}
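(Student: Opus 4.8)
The plan is to reduce the claim to \emph{injectivity} of the restriction map, since surjectivity holds by the very definition of $\LYng{m}{n}$ as the image of $\Yng{m}{n}$ under restriction. Thus the entire content is the following statement: if two Young ranking tables agree on the L-shape $([m]\times\{1\})\cup(\{1\}\times[n])$, then they agree on all of $[m]\times[n]$. To prove this I would show that the full table is recoverable from the relative order of the L-shape values alone.

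The key observation I would exploit is that a table $\tau_{1,\xi}^{m,n}$ depends on $\xi$ only through the total order that the affine functions $\myv{i}{j}\mapsto i+j\xi$ induce on $[m]\times[n]$: because $\myChi{s+t\xi\le i+j\xi}$ merely records a pairwise comparison, two parameters $\xi,\xi'$ yield the same table if and only if they order every pair $(\myv{i}{j},\myv{i'}{j'})$ identically. A single comparison $i+j\xi \lessgtr i'+j'\xi$ is forced when $j=j'$ (it reduces to comparing $i$ and $i'$), and when $j\ne j'$ it is governed by whether $\xi$ lies above or below the rational $(i-i')/(j'-j)$; if this rational is $\le 0$ the comparison is again forced, since $\xi>0$. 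Hence the only genuine \emph{walls} are the rationals $p/q$ with $1\le p\le m-1$ and $1\le q\le n-1$, forming a finite set $S$, and $\tau_{1,\xi}^{m,n}$ is constant on each open chamber cut out of $(0,\infty)$ by $S$. Injectivity of $\tau$ forces $\xi\notin S$.

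The crux is to check that the same wall set $S$ controls the L-shape restriction. The comparisons that the restriction records are exactly $i+\xi \lessgtr 1+j\xi$ for $i\in[m]$, $j\in[n]$, i.e.\ whether $\xi$ is above or below $(i-1)/(j-1)$; setting $p=i-1$, $q=j-1$, these thresholds are precisely the elements of $S$. Conversely, each wall $p/q\in S$ is witnessed by the L-shape pair $\myv{p+1}{1}$ and $\myv{1}{q+1}$ (admissible since $p+1\le m$ and $q+1\le n$), whose order reverses exactly at $\xi=p/q$. Therefore the L-shape restriction is also constant on each chamber of $(0,\infty)\setminus S$ and separates distinct chambers. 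Combining the two halves, equal L-shape restrictions place $\xi$ and $\xi'$ in the same chamber, which forces the full tables to coincide, giving injectivity and hence bijectivity.

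The step I expect to be the main obstacle is the bookkeeping that makes ``the walls for the full table equal the walls for the L-shape'' airtight: one must track the sign of $j'-j$, discard the vacuously determined comparisons (those with $j=j'$, or with nonpositive threshold), and confirm that every $p/q$ with $1\le p\le m-1,\ 1\le q\le n-1$ is realized by an admissible L-shape pair. Once this matching of wall sets is pinned down, the chamber argument is immediate; as a byproduct it also re-establishes that each $\tau_{1,\xi}^{m,n}$ is well defined, being constant on chambers.
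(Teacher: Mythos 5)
Your argument is correct, but note that the paper itself does not prove this statement: it is imported as a Fact from \cite[Corollary~7.5]{ranking3}, so there is no in-text proof to match. The route the paper attributes to that reference (see its appendix discussion around Fact~\ref{fact:detproc}) is purely combinatorial: a ranking table is simultaneously of horizontal difference equation type and of point-symmetric sum congruent type, and for such tables there is an explicit reconstruction procedure that recovers the whole table from its L-shape values. Your proof is genuinely different and works directly with the real parameter: you observe that $\tau_\nyh{1}{\xi}^{\nyh{m}{n}}$ depends only on the chamber of $(0,\infty)\setminus S$ containing $\xi$, where $S=\{p/q: p\in[m-1],\ q\in[n-1]\}=\Garey{m-1}{n-1}\setminus\{0,\infty\}$ (so the chambers are exactly $\Gint{m-1}{n-1}$, recovering the well-definedness half of Fact~\ref{fact:yas}), and that each wall $p/q$ is already detected by the L-shape because the ranks of $\myv{p+1}{1}$ and $\myv{1}{q+1}$ swap across it; hence the L-shape determines the chamber, which determines the table. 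The wall-matching bookkeeping you flag as the main obstacle does close correctly: comparisons with $j=j'$ or with nonpositive threshold are forced, the remaining thresholds are exactly $S$, and every element of $S$ is witnessed by an admissible L-shape pair. One phrasing to tighten: the restriction does not record ``exactly'' the comparisons $i+\xi\lessgtr 1+j\xi$ (each restricted value is a rank against all of $[m]\times[n]$), but this is harmless --- constancy on chambers follows from constancy of the full table, and separation needs only that the relative order of two L-shape values is visible in the restriction. The trade-off: your argument is shorter and self-contained but nonconstructive, whereas the cited combinatorial route yields the decompression algorithm that the rest of the paper (e.g.\ Corollary~\ref{coro:Lset} and Appendix~\ref{sect:implranking}) actually uses.
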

\begin{fact}[{\cite[Proposition 7.7]{ranking3}}]\label{fact:Lincl}
For positive integers $m, n$, 
the inclusion
$$
\LYng{m}{n} \subset \Lsetitimono{m}{n}
$$
holds. 
\end{fact}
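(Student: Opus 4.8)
The plan is to take an arbitrary element $f \in \LYng{m}{n}$, write it as the restriction of a Young ranking table $\tau = \tau_\nyh{1}{\xi}^\nyh{m}{n}$ to $([m]\times\{1\}) \cup (\{1\}\times[n])$, and verify the three defining requirements of membership in $\Lsetitimono{m}{n}$ (Definition \ref{defi:Lde}): that $f$ is an injection, that $f\myv{1}{1} = 1$, and that $f$ satisfies the condition $\mathrm{E}_\nyh{m}{n}$. The first requirement is immediate, since $\tau$ is injective by the definition of a Young ranking table and a restriction of an injection is an injection. For the second, $f\myv{1}{1} = \tau\myv{1}{1}$ counts the pairs $\myv{s}{t} \in [m]\times[n]$ with $s + t\xi \le 1 + \xi$; as $s, t \ge 1$ and $\xi > 0$, the only such pair is $\myv{1}{1}$, so $f\myv{1}{1} = 1$.

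The core of the argument is verifying $\mathrm{E}_\nyh{m}{n}$, and here the main idea is to translate both sides into counts of lattice points. Since $\tau$ ranks the distinct reals $\{s + t\xi\}$, the inequality $\tau\myv{a}{b} \le \tau\myv{c}{d}$ is equivalent to $a + b\xi \le c + d\xi$. Fixing $i \in [m-1]$ for the first half of $\mathrm{E}_\nyh{m}{n}$, the left-hand side $f\myv{i+1}{1} - f\myv{i}{1}$ equals the number of pairs $\myv{s}{t} \in [m]\times[n]$ lying in the diagonal strip $i + \xi < s + t\xi \le i + 1 + \xi$, while the right-hand side $\sum_{t\in[n]} \myChi{f\myv{1}{t} \le f\myv{i+1}{1}}$ equals the number of $t \in [n]$ with $1 + t\xi \le i + 1 + \xi$.

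The key observation that equates these two counts is that the strip has real width exactly $1$: for each fixed $t$, the admissible values of $s$ form the half-open interval $(\,i + (1-t)\xi,\ i + 1 + (1-t)\xi\,]$, which has length $1$ and hence contains exactly one integer $s_t$. One then checks that $s_t \in [m]$ precisely when the upper endpoint $i + 1 + (1-t)\xi$ is at least $1$: the bound $s_t \ge 1$ follows from $s_t > i + (1-t)\xi = (\text{upper}) - 1 \ge 0$, while $s_t \le m$ follows from $s_t \le i + 1 + (1-t)\xi \le i + 1 \le m$ (using $\xi > 0$, $t \ge 1$, and $i \le m - 1$). Since $i + 1 + (1-t)\xi \ge 1$ is exactly the condition $1 + t\xi \le i + 1 + \xi$, summing the per-$t$ contributions shows the two counts agree, which is the first half of $\mathrm{E}_\nyh{m}{n}$.

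Finally, I would obtain the second half of $\mathrm{E}_\nyh{m}{n}$ (the column condition) by transposition rather than by repeating the computation: multiplying each value $s + t\xi$ by the positive constant $1/\xi$ preserves the ordering, so the transpose $\tp{\tau}$ is the Young ranking table of \order $\myv{n}{m}$ with parameter $1/\xi$, and its injectivity is equivalent to that of $\tau$. The column half of $\mathrm{E}_\nyh{m}{n}$ for $\tau$ is then literally the row half of $\mathrm{E}_\nyh{n}{m}$ for $\tp{\tau}$, already established above. I expect the main obstacle to be the lattice-point bookkeeping of the third paragraph — in particular, pinning down exactly when the unique integer $s_t$ in the unit-length strip falls inside $[m]$ and matching that condition to the column count; once that identity is in hand, the remaining verifications are routine.
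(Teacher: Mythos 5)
The paper does not actually prove this statement: it is imported verbatim as a ``Fact'' from \cite[Proposition 7.7]{ranking3}, so there is no in-paper proof to compare against. Your argument is a correct and self-contained proof. The reduction of $\tau\myv{a}{b}\le\tau\myv{c}{d}$ to $a+b\xi\le c+d\xi$ is legitimate because injectivity of $\tau$ forces the reals $s+t\xi$ to be pairwise distinct, and your lattice-point bookkeeping is sound: the half-open strip $(\,i+(1-t)\xi,\ i+1+(1-t)\xi\,]$ of length $1$ contains exactly one integer, which lies in $[m]$ precisely when the upper endpoint is $\ge 1$, and that threshold condition is exactly $1+t\xi\le i+1+\xi$, i.e.\ $f\myv{1}{t}\le f\myv{i+1}{1}$. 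The closing step, deducing the column half of $\mathrm{E}_\nyh{m}{n}$ from the row half applied to $\tp{\tau}=\tau_\nyh{1}{\xi^{-1}}^{\nyh{n}{m}}$, matches the identity the paper itself records as \eqref{eq:tptau} in the proof of Proposition \ref{prop:tpyast}, so it is consistent with the machinery used elsewhere in the text. I see no gap.
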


\subsubsection{Level $k$ vertices of the Young terminal pairs}
Thus, Fact \ref{fact:Litimonorecur} is in particular applicable to
the Young L-shapes $\LYng{m}{n}$ and the following set
of terminal pairs
is
essentially same as $\LYng{m}{n}$.
\begin{defi}[{\cite[Notation 7.8]{ranking3}}]\label{defi:TYng}
    For positive integers $m, n$, let
    $$
    \TYng{m}{n} :=
    \{ (\theta\myv{m}{1}, \theta\myv{1}{n}, \myv{m}{n}):
     \theta \in \Yng{m}{n} \},
    $$
which is a subset of $\Lsetitimonozero{m}{n}$ by Fact \ref{fact:Lincl}.
An element of $\TYng{m}{n}$ is referred to as a Young terminal pair.
\end{defi}
Combining the above with Fact \ref{fact:YandLY}, the Young terminal pairs
$\TYng{m}{n}$ can be
identified with the Young ranking tables $\Yng{m}{n}$.

Now, level $k$ vertices of the tree
are defined to be the
union of $\TYng{m}{n}$ with $m+n=k+2$.
\begin{defi}[{\cite[Notation 7.8]{ranking3}}]
For $k \geq 0$, let
\begin{align*}
\TYngdon{k+2} & := \bigsqcup_{\begin{smallmatrix}m, n\geq 1 \\ m + n = k+2 \end{smallmatrix}} \TYng{m}{n}.
\end{align*}
\end{defi}
By Fact \ref{fact:Lincl}, the set of level $k$ vertices
in $\mathbb{T}_N^{\TYngdon{},\phidon{}}$ is a subset
of the level $k$ vertices of 
$\mathbb{T}_N^{\Lsetitimonozerodon{},\upmapzerodon{}}$.

\subsubsection{The edges of $\mathbb{T}_N^{\TYngdon{},\phidon{}}$}
\label{sss:phitree}
To develop the inter-level maps for the Young terminal pairs that are
parallel to the inter-level maps for the generalized Farey
intervals and/or the injective terminal pairs of difference equation
type, we recall \textit{yet another {\mysur}'s bijection.}
\begin{fact}[{\cite[Corollary 11]{ranking}} {\cite[Proposition 4.14]{ranking3}}]\label{fact:yas}
A
map
  \begin{equation}
    \yas{m-1}{n-1}:\Gint{m-1}{n-1} \rightarrow \Yng{m}{n}
  \end{equation}
is defined by the following way;
  Given $(a,b) \in \Gint{m-1}{n-1},$ take $\xi \in (a, b)$ arbitrarily,
  then let
  \begin{equation}
    \yas{m-1}{n-1}((a,b)) := \tau_\nyh{1}{\xi}^{ \nyh{m}{n}}.
  \end{equation}
  The map is well-defined and is bijective also.
  The bijection $\yas{m-1}{n-1}$ 
  is referred to as \textit{yet another {\mysur}'s bijection}.
\end{fact}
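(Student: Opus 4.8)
The plan is to exploit the ranking interpretation recorded right after Definition \ref{defi:yr}: the value $\tau_{\nyh{1}{\xi}}^{\nyh{m}{n}}\myv{i}{j}$ is the rank of the real number $i + j\xi$ inside the multiset $\{s + t\xi : \myv{s}{t} \in [m]\times[n]\}$ of $mn$ affine functions of $\xi$. Consequently the whole table $\tau_{\nyh{1}{\xi}}^{\nyh{m}{n}}$ is determined by nothing more than the relative order of these $mn$ lines, and as $\xi$ increases through $(0,\infty)$ this order is piecewise constant, changing only where two of the lines cross. First I would locate these crossing points: $s + t\xi = s' + t'\xi$ with $\myv{s}{t} \neq \myv{s'}{t'}$ forces $t \neq t'$ and $\xi = (s-s')/(t'-t)$, and for $\xi > 0$ these values range over exactly the fractions $p/q$ with $p \in [m-1]$ and $q \in [n-1]$, that is, over exactly the finite elements of $\Garey{m-1}{n-1}$.

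From this, well-definedness and the claim that the image lies in $\Yng{m}{n}$ follow together. On an open interval $(a,b) \in \Gint{m-1}{n-1}$, i.e. between two consecutive terms of $\Gseq{m-1}{n-1}$, no crossing value $p/q$ is encountered, so the order of the $mn$ lines is constant and $\tau_{\nyh{1}{\xi}}^{\nyh{m}{n}}$ does not depend on the choice of $\xi \in (a,b)$; moreover, since no two lines coincide on the open interval, the values $s + t\xi$ are pairwise distinct and the rank function is injective, so $\tau_{\nyh{1}{\xi}}^{\nyh{m}{n}}$ is genuinely a Young ranking table. Thus $\yas{m-1}{n-1}$ is a well-defined map into $\Yng{m}{n}$.

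For bijectivity I would argue the two directions separately. Surjectivity is essentially immediate from Definition \ref{defi:yr}: every element of $\Yng{m}{n}$ is by definition $\tau_{\nyh{1}{\xi}}^{\nyh{m}{n}}$ for some $\xi > 0$ at which it is injective, and injectivity forces $\xi$ to avoid every crossing value; hence $\xi$ falls in the interior of some interval $(a,b) \in \Gint{m-1}{n-1}$ (the intervals tile $(0,\infty)$ away from the finitely many crossing points), and the given table equals $\yas{m-1}{n-1}((a,b))$. For injectivity of $\yas{m-1}{n-1}$ it suffices to check that crossing each endpoint $p/q$ actually alters the table: the index points $\myv{p+1}{1}$ and $\myv{1}{q+1}$, both legitimate since $p \le m-1$ and $q \le n-1$, satisfy $(p+1) + 1\cdot\xi = 1 + (q+1)\xi$ precisely at $\xi = p/q$, so they reverse order there; thus neighboring intervals produce different tables, and transitivity along the finite increasing chain $\Gseq{m-1}{n-1}$ separates all intervals.

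The main point to nail down is the exact matching between the crossing values of the $mn$ affine functions and the finite part of $\Garey{m-1}{n-1}$ — that every fraction $p/q$ with $p \in [m-1]$, $q \in [n-1]$ really occurs as a crossing, witnessed by a realizable pair of index points in $[m]\times[n]$, and that no other positive value does. Once this dictionary is in place the rest is bookkeeping; the only mild care needed is that crossing a value at which several pairs of lines coincide simultaneously still changes the table, which is guaranteed because at least the distinguished pair $\myv{p+1}{1}, \myv{1}{q+1}$ reverses.
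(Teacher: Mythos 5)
This statement is quoted as a \textit{Fact} from \cite[Corollary 11]{ranking} and \cite[Proposition 4.14]{ranking3}; the present paper gives no proof of it, so there is no in-paper argument to compare yours against. Taken on its own, your proof is essentially correct and is the natural direct argument: the table $\tau_{\nyh{1}{\xi}}^{\nyh{m}{n}}$ depends only on the order type of the $mn$ lines $\xi \mapsto s+t\xi$, the positive crossing abscissae are exactly the fractions $p/q$ with $p\in[m-1]$, $q\in[n-1]$ (i.e.\ the finite nonzero elements of $\Garey{m-1}{n-1}$), and ties occur exactly at crossings, which simultaneously gives well-definedness on each interval of $\Gint{m-1}{n-1}$, membership of the image in $\Yng{m}{n}$, and (via the definition of $\Yng{m}{n}$ as the injective instances) surjectivity. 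Your handling of the degenerate cases $m=1$ or $n=1$ is implicitly covered since then there are no positive crossings and $\Gint{m-1}{n-1}$ is the single interval $(0,\infty)$.

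One phrase deserves tightening: ``neighboring intervals produce different tables, and transitivity along the finite increasing chain separates all intervals'' is not literally a valid inference, since $a\neq b$ and $b\neq c$ do not give $a\neq c$. The correct statement, which your own construction already supplies, is that for the distinguished pair $\myv{p+1}{1}$, $\myv{1}{q+1}$ the sign of $(p+1)+\xi - \bigl(1+(q+1)\xi\bigr) = p-q\xi$ is positive on \emph{all} of $(0,p/q)$ and negative on \emph{all} of $(p/q,\infty)$; hence any two intervals of $\Gint{m-1}{n-1}$ lying on opposite sides of some $p/q \in \Garey{m-1}{n-1}$ already yield tables that disagree on the relative order of that pair. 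Since any two distinct intervals are separated by at least one such $p/q$, injectivity follows directly, with no appeal to adjacency. With that one sentence repaired, the argument is complete.
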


Combining the above and the fact noted below Definition \ref{defi:TYng}
that the Young terminal pairs
$\TYng{m}{n}$ can be
identified with the Young ranking tables $\Yng{m}{n}$,
the bijection $\yas{m-1}{n-1}:\Gint{m-1}{n-1} \rightarrow \Yng{m}{n}$
induces the following bijection.
\begin{defi}\label{defi:yast}
For positive  integers $m, n$, let $\yast{m-1}{n-1}$ denote the bijection
$$
\yast{m-1}{n-1}:
\Inter{m-1}{n-1} \ni ((a,b),\myv{m-1}{n-1})
\mapsto
\left(\tau\myv{m}{1}, \tau\myv{1}{n},
\myv{m}{n}  \right) \in \TYng{m}{n},
$$
where $\tau = \yas{m-1}{n-1}((a,b)) \in \Yng{m}{n}.$ 
\end{defi}
\begin{exam}\label{exam:rank}
In Fig.~\ref{fig:yast}, the elements of $\Inter{3}{2}$ and their
images by $\yast{3}{2}$ which are elements of $\TYng{4}{3}$ are depicted.
The 7 elements of $\Garey{3}{2}$ are represented by dashed lines
of the corresponding slopes. A generalized Farey interval
$(a,b)$ corresponds to a region between a pair of two dashed lines
(of slopes $a$ and $b$ respectively).
Such a region is represented by a small circle inside it,
followed by the label $((a,b),\myv{3}{2})$ then $\mapsto$
$\yast{3}{2}(((a,b),\myv{3}{2}))$ which is of the form
$(
\tau_{\nyh{1}{\xi}}^{ \nyh{4}{3}}\myv{4}{1},
\tau_{\nyh{1}{\xi}}^{ \nyh{4}{3}}\myv{1}{3}, \myv{4}{3})$.
For example, %
the label
\begin{equation}
\left(\left(\tfrac{1}{1},\tfrac{3}{2}\right), \tbinom{3}{2}\right)\mapsto \left(7,6,\tbinom{4}{3}\right)
\end{equation}
following the \Th{4} circle from the left
indicates that
$$
\yast{3}{2}\left(\left(\tfrac{1}{1}, \tfrac{3}{2} \right), \myv{3}{2}\right) = (\tau_{\nyh{1}{\xi}}^{ \nyh{4}{3}}\myv{4}{1},
\tau_{\nyh{1}{\xi}}^{ \nyh{4}{3}}\myv{1}{3}, \myv{4}{3}) = (7,6,\myv{4}{3}),
$$
where we can use, say, %
$\xi = \frac{5}{4}$ as an element of the open interval
$\left(\frac{1}{1}, \frac{3}{2} \right)$. %
RHS is computed using \eqref{eq:taudefxi} as follows:
For $\xi = \frac{5}{4} = 1.25,$ it holds that
\begin{equation}
  |i + \xi j|_{\myv{i}{j} \in [4] \times [3]}
  = \left|\begin{matrix}
4.75 & 5.75 & 6.75 & 7.75\\
3.5  & 4.5  & 5.5  & 6.5\\
2.25 & 3.25 & 4.25 & 5.25
  \end{matrix}\right|,
\end{equation}
where $\myv{i}{j} = \myv{1}{1}, \myv{4}{1}, \myv{1}{3}$
  correspond to the lower-left, lower-right, upper-left corners respectively,
which yields
\begin{equation}
\left|\sum_{s \in [4]} \sum_{t \in [3]}
\myChi{s + \xi t \leq i + \xi j}
\right|_{\myv{i}{j} \in [4] \times [3]}
  = \left|\ensuremath{\begin{matrix}
    6 & 9 & 11 & 12 \\
    3 & 5 & 8 & 10 \\
    1 & 2 & 4 & 7
  \end{matrix}}\right|
\end{equation}
from which we obtain
$\tau_{\nyh{1}{\xi}}^{ \nyh{4}{3}}\myv{4}{1} = 7$
and
$\tau_{\nyh{1}{\xi}}^{ \nyh{4}{3}}\myv{1}{3} = 6$.
\begin{figure}
\centering  
\rotatebox{0}{\scalebox{0.8}{
    \begin{tikzpicture}[scale=7]
      \clip (3,2) rectangle + (1.6,1.0);
\draw[color=gray,dashed] (3,2) -- (4,2);
\draw[color=gray,dashed] (3,2) -- (3,3);

\draw[fill=white] (0.5,0.5) circle (0.01);

\draw[fill=white] (1.5,0.5) circle (0.01);

\draw[fill=white] (2.5,0.5) circle (0.01);

\draw[fill=white] (3.5,0.5) circle (0.01);

\draw[fill=white] (4.5,0.5) circle (0.01);

\draw[fill=white] (5.5,0.5) circle (0.01);
\draw (5.5,0.5) node [right] {$\left(\left(\frac{0}{1},\infty\right), \binom{5}{0}\right)\mapsto \left(6,1,\binom{6}{1}\right)$};

\draw[color=gray, dashed] (4,1) -- (4.25,2);
\draw[fill=white] (4.1,1.9) circle (0.01);
\draw (4.1,1.9) node [right] {$\left(\left(\frac{4}{1},\infty\right), \binom{4}{1}\right)\mapsto \left(5,6,\binom{5}{2}\right)$};

\draw[color=gray, dashed] (3,1) -- (3.33333,2);
\draw[fill=white] (3.125,1.875) circle (0.01);

\draw[color=gray, dashed] (4,1) -- (4.33333,2);
\draw[fill=white] (4.225,1.775) circle (0.01);
\draw (4.225,1.775) node [right] {$\left(\left(\frac{3}{1},\frac{4}{1}\right), \binom{4}{1}\right)\mapsto \left(6,5,\binom{5}{2}\right)$};

\draw[color=gray, dashed] (3,2) -- (3.33333,3);
\draw[fill=white] (3.125,2.875) circle (0.01);
\draw (3.125,2.875) node [right] {$\left(\left(\frac{3}{1},\infty\right), \binom{3}{2}\right)\mapsto \left(4,9,\binom{4}{3}\right)$};

\draw[color=gray, dashed] (2,1) -- (2.5,2);
\draw[fill=white] (2.16667,1.83333) circle (0.01);

\draw[color=gray, dashed] (3,1) -- (3.5,2);
\draw[fill=white] (3.29167,1.70833) circle (0.01);

\draw[color=gray, dashed] (4,1) -- (4.5,2);
\draw[fill=white] (4.29167,1.70833) circle (0.01);
\draw (4.29167,1.70833) node [right] {$\left(\left(\frac{2}{1},\frac{3}{1}\right), \binom{4}{1}\right)\mapsto \left(7,4,\binom{5}{2}\right)$};

\draw[color=gray, dashed] (3,2) -- (3.5,3);
\draw[fill=white] (3.29167,2.70833) circle (0.01);
\draw (3.29167,2.70833) node [right] {$\left(\left(\frac{2}{1},\frac{3}{1}\right), \binom{3}{2}\right)\mapsto \left(5,8,\binom{4}{3}\right)$};

\draw[color=gray, dashed] (2,2) -- (2.5,3);
\draw[fill=white] (2.16667,2.83333) circle (0.01);

\draw[color=gray, dashed] (2,3) -- (2.5,4);
\draw[fill=white] (2.16667,3.83333) circle (0.01);
\draw (2.16667,3.83333) node [right] {$\left(\left(\frac{2}{1},\infty\right), \binom{2}{3}\right)\mapsto \left(3,10,\binom{3}{4}\right)$};

\draw[color=gray, dashed] (1,1) -- (2,2);
\draw[fill=white] (1.25,1.75) circle (0.01);

\draw[color=gray, dashed] (2,1) -- (3,2);
\draw[fill=white] (2.41667,1.58333) circle (0.01);

\draw[color=gray, dashed] (3,1) -- (4,2);
\draw[fill=white] (3.41667,1.58333) circle (0.01);

\draw[color=gray, dashed] (4,1) -- (5,2);
\draw[fill=white] (4.41667,1.58333) circle (0.01);
\draw (4.41667,1.58333) node [right] {$\left(\left(\frac{1}{1},\frac{2}{1}\right), \binom{4}{1}\right)\mapsto \left(8,3,\binom{5}{2}\right)$};

\draw[color=gray, dashed] (3,2) -- (3.66667,3);
\draw[fill=white] (3.36667,2.63333) circle (0.01);
\draw (3.36667,2.63333) node [right] {$\left(\left(\frac{3}{2},\frac{2}{1}\right), \binom{3}{2}\right)\mapsto \left(6,7,\binom{4}{3}\right)$};

\draw[color=gray, dashed] (2,2) -- (3,3);
\draw[fill=white] (2.41667,2.58333) circle (0.01);

\draw[color=gray, dashed] (3,2) -- (4,3);
\draw[fill=white] (3.45,2.55) circle (0.01);
\draw (3.45,2.55) node [right] {$\left(\left(\frac{1}{1},\frac{3}{2}\right), \binom{3}{2}\right)\mapsto \left(7,6,\binom{4}{3}\right)$};

\draw[color=gray, dashed] (2,3) -- (3,4);
\draw[fill=white] (2.41667,3.58333) circle (0.01);
\draw (2.41667,3.58333) node [right] {$\left(\left(\frac{1}{1},\frac{2}{1}\right), \binom{2}{3}\right)\mapsto \left(4,9,\binom{3}{4}\right)$};

\draw[color=gray, dashed] (1,2) -- (2,3);
\draw[fill=white] (1.25,2.75) circle (0.01);

\draw[color=gray, dashed] (1,3) -- (2,4);
\draw[fill=white] (1.25,3.75) circle (0.01);

\draw[color=gray, dashed] (1,4) -- (2,5);
\draw[fill=white] (1.25,4.75) circle (0.01);
\draw (1.25,4.75) node [right] {$\left(\left(\frac{1}{1},\infty\right), \binom{1}{4}\right)\mapsto \left(2,9,\binom{2}{5}\right)$};

\draw[fill=white] (0.5,1.5) circle (0.01);

\draw[fill=white] (0.5,2.5) circle (0.01);

\draw[fill=white] (0.5,3.5) circle (0.01);

\draw[fill=white] (0.5,4.5) circle (0.01);

\draw[fill=white] (0.5,5.5) circle (0.01);
\draw (0.5,5.5) node [right] {$\left(\left(\frac{0}{1},\infty\right), \binom{0}{5}\right)\mapsto \left(1,6,\binom{1}{6}\right)$};

\draw[fill=white] (1.9,4.1) circle (0.01);
\draw (1.9,4.1) node [right] {$\left(\left(\frac{0}{1},\frac{1}{4}\right), \binom{1}{4}\right)\mapsto \left(6,5,\binom{2}{5}\right)$};

\draw[fill=white] (1.875,3.125) circle (0.01);

\draw[fill=white] (2.875,3.125) circle (0.01);
\draw (2.875,3.125) node [right] {$\left(\left(\frac{0}{1},\frac{1}{3}\right), \binom{2}{3}\right)\mapsto \left(9,4,\binom{3}{4}\right)$};

\draw[color=gray, dashed] (1,4) -- (2,4.25);
\draw[fill=white] (1.775,4.225) circle (0.01);
\draw (1.775,4.225) node [right] {$\left(\left(\frac{1}{4},\frac{1}{3}\right), \binom{1}{4}\right)\mapsto \left(5,6,\binom{2}{5}\right)$};

\draw[fill=white] (1.83333,2.16667) circle (0.01);

\draw[fill=white] (2.83333,2.16667) circle (0.01);

\draw[fill=white] (3.83333,2.16667) circle (0.01);
\draw (3.83333,2.16667) node [right] {$\left(\left(\frac{0}{1},\frac{1}{2}\right), \binom{3}{2}\right)\mapsto \left(10,3,\binom{4}{3}\right)$};

\draw[color=gray, dashed] (1,3) -- (2,3.33333);
\draw[fill=white] (1.70833,3.29167) circle (0.01);

\draw[color=gray, dashed] (2,3) -- (3,3.33333);
\draw[fill=white] (2.70833,3.29167) circle (0.01);
\draw (2.70833,3.29167) node [right] {$\left(\left(\frac{1}{3},\frac{1}{2}\right), \binom{2}{3}\right)\mapsto \left(8,5,\binom{3}{4}\right)$};

\draw[color=gray, dashed] (1,4) -- (2,4.33333);
\draw[fill=white] (1.70833,4.29167) circle (0.01);
\draw (1.70833,4.29167) node [right] {$\left(\left(\frac{1}{3},\frac{1}{2}\right), \binom{1}{4}\right)\mapsto \left(4,7,\binom{2}{5}\right)$};

\draw[fill=white] (1.75,1.25) circle (0.01);

\draw[fill=white] (2.75,1.25) circle (0.01);

\draw[fill=white] (3.75,1.25) circle (0.01);

\draw[fill=white] (4.75,1.25) circle (0.01);
\draw (4.75,1.25) node [right] {$\left(\left(\frac{0}{1},\frac{1}{1}\right), \binom{4}{1}\right)\mapsto \left(9,2,\binom{5}{2}\right)$};

\draw[color=gray, dashed] (1,2) -- (2,2.5);
\draw[fill=white] (1.58333,2.41667) circle (0.01);

\draw[color=gray, dashed] (2,2) -- (3,2.5);
\draw[fill=white] (2.58333,2.41667) circle (0.01);

\draw[color=gray, dashed] (3,2) -- (4,2.5);
\draw[fill=white] (3.58333,2.41667) circle (0.01);
\draw (3.58333,2.41667) node [right] {$\left(\left(\frac{1}{2},\frac{1}{1}\right), \binom{3}{2}\right)\mapsto \left(9,4,\binom{4}{3}\right)$};

\draw[color=gray, dashed] (2,3) -- (3,3.66667);
\draw[fill=white] (2.55,3.45) circle (0.01);
\draw (2.55,3.45) node [right] {$\left(\left(\frac{2}{3},\frac{1}{1}\right), \binom{2}{3}\right)\mapsto \left(6,7,\binom{3}{4}\right)$};

\draw[color=gray, dashed] (1,3) -- (2,3.5);
\draw[fill=white] (1.58333,3.41667) circle (0.01);

\draw[color=gray, dashed] (2,3) -- (3,3.5);
\draw[fill=white] (2.63333,3.36667) circle (0.01);
\draw (2.63333,3.36667) node [right] {$\left(\left(\frac{1}{2},\frac{2}{3}\right), \binom{2}{3}\right)\mapsto \left(7,6,\binom{3}{4}\right)$};

\draw[color=gray, dashed] (1,4) -- (2,4.5);
\draw[fill=white] (1.58333,4.41667) circle (0.01);
\draw (1.58333,4.41667) node [right] {$\left(\left(\frac{1}{2},\frac{1}{1}\right), \binom{1}{4}\right)\mapsto \left(3,8,\binom{2}{5}\right)$};

\end{tikzpicture}
}}
\caption{$\yast{3}{2}:\Inter{3}{2} \to \TYng{4}{3}$}\label{fig:yast}
\end{figure}
\end{exam}

The yet another {\mysur}'s bijection of Definition \ref{defi:yast}
is extended to the bijection from $\Interdon{k-2}$ to
$\TYngdon{k}$ in an obvious way.
\begin{defi}\label{defi:yastdon}
  For an integer $k \geq 3,$ the bijection
  $\yastdon{k-1}:\Interdon{k-2}\to \TYngdon{k}$ is defined as follows;
  Given $z \in \Interdon{k-2},$ let $\myv{m}{n}$ be the unique pair
  of integers such that $(m-1)+(n-1)=k-2$ and $z \in \Inter{m-1}{n-1}.$
  Let $\yastdon{k-1}(z) := \yast{m-1}{n-1}(z) \in \TYng{m}{n} \subset \TYngdon{k}.$
\end{defi}
Again we note that it commutes with the transposition:
\begin{prop}\label{prop:tpyast}
For an integer $k \geq 3$, it follows that
$$
\tp{(\cdot)}\circ\yastdon{k-1} = \yastdon{k-1}\circ\tp{(\cdot)},
$$
where $\tp{(\cdot)}$ of LHS is for terminal pairs
$\TYngdon{k}$, whereas
that of RHS is for 
generalized Farey intervals
$\Interdon{k-2}$.
\end{prop}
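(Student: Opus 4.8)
The plan is to reduce the proposition to a fiber-wise statement and then verify it by a direct computation with the Iverson sum defining the Young ranking table. Since $\yastdon{k-1}$ acts on the block $\Inter{m-1}{n-1}$ of $\Interdon{k-2}$ as $\yast{m-1}{n-1}$, and since the transposition of Farey intervals (Definition \ref{defi:tpfarey}) carries this block to $\Inter{n-1}{m-1}$ while the transposition of terminal pairs (Definition \ref{defi:tpterm}) swaps the two index blocks of $\TYngdon{k}$, it suffices to prove, for every $\myv{m}{n}$ with $m+n=k$ and $m,n\ge 1$, that
\[
\yast{n-1}{m-1}\bigl(\tp{((a,b),\myv{m-1}{n-1})}\bigr) = \tp{\bigl(\yast{m-1}{n-1}(((a,b),\myv{m-1}{n-1}))\bigr)} \qquad (((a,b),\myv{m-1}{n-1}) \in \Inter{m-1}{n-1}).
\]

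Next I would expand both sides. Fix $\xi \in (a,b)$ and put $\tau := \tau_{\nyh{1}{\xi}}^{\nyh{m}{n}}$, so that $\yast{m-1}{n-1}(((a,b),\myv{m-1}{n-1})) = (\tau\myv{m}{1}, \tau\myv{1}{n}, \myv{m}{n})$ and its transposition equals $(\tau\myv{1}{n}, \tau\myv{m}{1}, \myv{n}{m})$. On the other side $\tp{((a,b),\myv{m-1}{n-1})} = ((b^{-1},a^{-1}),\myv{n-1}{m-1})$, so choosing any $\xi' \in (b^{-1},a^{-1})$ and putting $\sigma := \tau_{\nyh{1}{\xi'}}^{\nyh{n}{m}}$ gives $\yast{n-1}{m-1}(\tp{((a,b),\myv{m-1}{n-1})}) = (\sigma\myv{n}{1}, \sigma\myv{1}{m}, \myv{n}{m})$. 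Thus the identity reduces to the two scalar equalities $\sigma\myv{n}{1} = \tau\myv{1}{n}$ and $\sigma\myv{1}{m} = \tau\myv{m}{1}$.

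The key step is the transposition identity $\sigma\myv{i}{j} = \tau\myv{j}{i}$ for all $\myv{i}{j} \in [n]\times[m]$. By the well-definedness asserted in Fact \ref{fact:yas}, the value of $\yas{n-1}{m-1}$ does not depend on the chosen representative, so I am free to take $\xi' = \xi^{-1}$; this is legitimate because $\xi \mapsto \xi^{-1}$ is precisely the order-reversing involution of $(0,\infty)$ (with the conventions $0^{-1}=\infty$, $\infty^{-1}=0$) that underlies the transposition of Farey intervals, and it carries $(a,b)$ onto $(b^{-1},a^{-1})$. With this choice, multiplying the defining inequality by $\xi>0$ gives
\[
\sigma\myv{i}{j} = \sum_{s\in[n]}\sum_{t\in[m]} \myChi{s + t\xi^{-1} \leq i + j\xi^{-1}} = \sum_{s\in[n]}\sum_{t\in[m]} \myChi{t + s\xi \leq j + i\xi},
\]
and renaming the two summation dummies ($s\leftrightarrow t$) turns the last sum into $\sum_{s\in[m]}\sum_{t\in[n]} \myChi{s + t\xi \leq j + i\xi} = \tau\myv{j}{i}$ by Definition \ref{defi:yr}. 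Specializing to $\myv{i}{j} = \myv{n}{1}$ and $\myv{i}{j} = \myv{1}{m}$ yields the two required equalities, completing the argument.

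I expect the only genuinely delicate point to be the invocation of well-definedness: the substitution $\xi' = \xi^{-1}$ is what makes the two Iverson sums literally coincide after clearing denominators, and it is licensed precisely because $\yas{m-1}{n-1}$ is independent of the representative $\xi$ (Fact \ref{fact:yas}). Everything else is bookkeeping --- matching the two index blocks under the two transpositions and the elementary relabeling of the double sum --- so no further obstacle is anticipated.
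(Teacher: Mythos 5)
Your proof is correct and follows essentially the same route as the paper: both reduce to a single block $\Inter{m-1}{n-1}$, choose the representative $\xi^{-1}\in(b^{-1},a^{-1})$ for the transposed interval (licensed by the well-definedness in Fact \ref{fact:yas}), and verify the identity $\tau_{\nyh{1}{\xi}}^{\nyh{m}{n}}\myv{i}{j}=\tau_{\nyh{1}{\xi^{-1}}}^{\nyh{n}{m}}\myv{j}{i}$ by clearing the denominator in the Iverson condition and swapping the summation indices. No gaps.
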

\begin{proof}
Suppose that $((a,b), \myv{m-1}{n-1}) \in \Interdon{k-2}$ with $(m-1) + (n-1) = k-2.$
By taking $\xi \in (a, b)$, we have
$\yastdon{k-1}(((a,b), \myv{m-1}{n-1})) =
(\tau_{\nyh{1}{\xi}}^{\nyh{m}{n}}\myv{m}{1}, \tau_{\nyh{1}{\xi}}^{\nyh{m}{n}}\myv{1}{n},
\myv{m}{n})$, hence 
$\tp{(\yastdon{k-1}(((a,b), \myv{m-1}{n-1})))}
 = (\tau_{\nyh{1}{\xi}}^{\nyh{m}{n}}\myv{1}{n}, 
 \tau_{\nyh{1}{\xi}}^{\nyh{m}{n}}\myv{m}{1},\allowbreak
 \myv{n}{m})$ by Definition \ref{defi:tpterm}.
 On the other hand, it holds that
 $\yastdon{k-1}(\tp{((a,b), \myv{m-1}{n-1})})
 = \yastdon{k-1}(((b^{-1},a^{-1}), \myv{n-1}{m-1})).$
By $\xi^{-1} \in (b^{-1},a^{-1}),$ we have a presentation
$\yastdon{k-1}(\tp{((a,b), \myv{m-1}{n-1})}) = 
(\tau_{\nyh{1}{\xi^{-1}}}^{\nyh{n}{m}}\myv{n}{1}, \tau_{\nyh{1}{\xi^{-1}}}^{\nyh{n}{m}}\myv{1}{m},
\myv{n}{m}).$
From Definition \ref{defi:yr}, it follows that
\begin{equation}
\tau_\nyh{1}{\xi}^{ \nyh{m}{n}} \myv{i}{j}
  = \sum_{s \in [m]} \sum_{t \in [n]}
  \myChi{s + t \xi \leq i + j \xi}
  =
  \sum_{t \in [n]}
  \sum_{s \in [m]} 
  \myChi{t + s \xi^{-1}   \leq j + i \xi^{-1}  }
  =
  \tau_\nyh{1}{\xi^{-1}}^{ \nyh{n}{m}} \myv{j}{i} \quad (\myv{i}{j} \in [m]\times[n])
\end{equation}
and in particular that 
\begin{equation}\label{eq:tptau}
(\tau_{\nyh{1}{\xi}}^{\nyh{m}{n}}\myv{1}{n}, \tau_{\nyh{1}{\xi}}^{\nyh{m}{n}}\myv{m}{1},
\myv{n}{m})
=
(\tau_{\nyh{1}{\xi^{-1}}}^{\nyh{n}{m}}\myv{n}{1}, \tau_{\nyh{1}{\xi^{-1}}}^{\nyh{n}{m}}\myv{1}{m},\myv{n}{m}),
\end{equation}
i.e., $\tp{(\yastdon{k-1}(((a,b), \myv{m-1}{n-1})))} =
\yastdon{k-1}(\tp{((a,b), \myv{m-1}{n-1})})$.
\end{proof}
  In particular, the image of $\TYngdon{k}$
  under the
  transposition for terminal pairs
  is included in the image of $\yastdon{k-1}$. Thus,
$\tp{(\cdot)}:\Lsetitimonozerodon{k} \to \Lsetitimonozerodon{k}$
induces
$\tp{(\cdot)}:\TYngdon{k} \to \TYngdon{k}$.

The inter-level surjection for the third tree is defined as follows.
\begin{defi}\label{defi:compat}
  For each integer $k \geq 2$,
  define a map $\phidon{k+1}:\TYngdon{k+1} \to \TYngdon{k}$
  as $\yastdon{k-1}\circ\trimapdon{k-1}\circ\yastdon{k}^{-1}$.
  In other words, $\phidon{k+1}$ is the unique map from
  $\TYngdon{k+1} $ to $\TYngdon{k}$ that makes the following
  diagram commutative:
  $$
\begin{CD}
\Interdon{k-1} @>\yastdon{k}>\text{bijective}> \TYngdon{k+1} \\
@V\trimapdon{k-1}VV @VV{\phidon{k+1}}V \\
\Interdon{k-2} @>\text{bijective}>\yastdon{k-1}> \TYngdon{k} 
\end{CD}  
$$
Note that
  $\phidon{k+1}:\TYngdon{k+1} \to \TYngdon{k}$ is surjective
  because of the surjectivity of $\trimapdon{k-1}$
  and the bijectivity of $\yastdon{k}$ and $\yastdon{k-1}$.
\end{defi}
The third tree $\mathbb{T}_N^{\TYngdon{},\phidon{}}$
  is such a tree that has the Young terminal pairs as its vertices
  and that the adjacency is defined by the inter-level surjections
  $\phidon{}$ which reflect the adjacency of
  $\mathbb{T}_N^{\Interdon{}, \trimapdon{}}.$
\begin{defi}\label{defi:phitree}
  Let $N$ be a nonnegative integer. For the sequence
  $\TYngdon{2}, \TYngdon{3}, \ldots,
  \TYngdon{N+2}$
  and their inter-level surjections
  $\phidon{k+3}:\TYngdon{k+3}\to\TYngdon{k+2} ~(0\leq k < N), $ 
  let  $\mathbb{T}_N^{\TYngdon{},\phidon{}}$ be the rooted tree 
  such that
  (i) the root is the unique element
$(1,1,\myv{1}{1})$
  of $\TYngdon{2}$
  (ii)
  for $0 \leq k \leq N$,
  the set of the level $k$ vertices is $\TYngdon{k+2}$
  (iii) for $0 \leq k < k' \leq N,$ vertices $v \in \TYngdon{k+2}$
  and $v' \in \TYngdon{k'+2}$ are adjacent if and only if
  $k' = k + 1$ and $\phidon{k'+2}(v') = v.$
In such an adjacent pair $(v,v')$,
$v$ is said to be the parent of $v'$ and $v'$ is said to be
a child of $v$. 
\end{defi}

  Definitions \ref{defi:compat} and \ref{defi:phitree}
  immediately yield the following assertion:
  \begin{prop}\label{prop:isom13}
The trees $\mathbb{T}_N^{\Interdon{},\trimapdon{}}$
    and $\mathbb{T}_N^{\TYngdon{},\phidon{}}$ are isomorphic
    where the correspondence between the vertices are given
    by the bijections $\yastdon{k+1}:\Interdon{k} \to \TYngdon{k+2},$
    $k = 0, 1, \ldots, N$.
  \end{prop}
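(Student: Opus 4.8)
The plan is to realize the family of bijections $\yastdon{k+1}:\Interdon{k}\to\TYngdon{k+2}$ as a single level-preserving graph isomorphism and then to check that it respects the two adjacency relations; by construction of $\phidon{}$, the second part will collapse to a one-line diagram chase. A rooted-tree isomorphism is precisely a bijection of vertex sets that carries the root to the root and preserves adjacency (directionality then being automatic once levels are preserved), so these are the only things I would verify.

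First I would assemble the level-wise maps into one map $\Phi$. For $1\leq k\leq N$ the map $\yastdon{k+1}:\Interdon{k}\to\TYngdon{k+2}$ is a bijection by Definition \ref{defi:yastdon} (ultimately by Fact \ref{fact:yas}). At the root level $k=0$ the sets $\Interdon{0}=\{((0,\infty),\myv{0}{0})\}$ and $\TYngdon{2}=\{(1,1,\myv{1}{1})\}$ are both singletons, so I take $\yastdon{1}$ to be the unique bijection between them. Since the vertex set of each tree is the disjoint union of its levels, and since $\Interdon{k}$ is sent into $\TYngdon{k+2}$ for every $k$, these level-wise bijections assemble into a single bijection $\Phi$ of vertex sets that preserves the level of every vertex and, in particular, sends the root $((0,\infty),\myv{0}{0})$ to the root $(1,1,\myv{1}{1})$.

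Next I would show that $\Phi$ carries edges to edges in both directions. By Definition \ref{defi:trimaptree}, vertices $v\in\Interdon{k}$ and $v'\in\Interdon{k+1}$ are adjacent exactly when $\trimapdon{k+1}(v')=v$, while by Definition \ref{defi:phitree} their images $\Phi(v)=\yastdon{k+1}(v)\in\TYngdon{k+2}$ and $\Phi(v')=\yastdon{k+2}(v')\in\TYngdon{k+3}$ are adjacent exactly when $\phidon{k+3}(\Phi(v'))=\Phi(v)$. Specializing Definition \ref{defi:compat} (reading its index as $k+2$, so that its $\phidon{k+1}$ becomes $\phidon{k+3}$) yields
\[
\phidon{k+3}=\yastdon{k+1}\circ\trimapdon{k+1}\circ\yastdon{k+2}^{-1},
\]
whence $\phidon{k+3}(\yastdon{k+2}(v'))=\yastdon{k+1}(\trimapdon{k+1}(v'))$ for every $v'\in\Interdon{k+1}$. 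Because $\yastdon{k+1}$ is injective, this identity shows that $\phidon{k+3}(\Phi(v'))=\Phi(v)$ holds if and only if $\trimapdon{k+1}(v')=v$. Thus $\Phi$ translates adjacency of $\mathbb{T}_N^{\Interdon{},\trimapdon{}}$ into adjacency of $\mathbb{T}_N^{\TYngdon{},\phidon{}}$ and back, so $\Phi$ is the desired rooted-tree isomorphism.

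I do not expect any genuine obstacle: the entire content is packaged into the commutative square defining $\phidon{}$, and the proof is in essence that diagram chase made explicit. The only points I would treat with care are the bookkeeping of the three interlocking index shifts (the constant offset $+2$ between corresponding levels of the two trees, the index $k+1$ on $\trimapdon{}$, and the index $k+3$ on $\phidon{}$) and the observation that the root level $k=0$ lies just outside the range of Definition \ref{defi:yastdon} and must be supplied separately by the trivial bijection of the two singletons.
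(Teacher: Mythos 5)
Your proposal is correct and is essentially the paper's own argument made explicit: the paper gives no separate proof, asserting that the proposition follows immediately from Definitions \ref{defi:compat} and \ref{defi:phitree}, and your diagram chase (adjacency $\trimapdon{k+1}(v')=v$ iff $\phidon{k+3}(\yastdon{k+2}(v'))=\yastdon{k+1}(v)$, via the defining identity $\phidon{k+3}=\yastdon{k+1}\circ\trimapdon{k+1}\circ\yastdon{k+2}^{-1}$ and injectivity of $\yastdon{k+1}$) is exactly that immediate verification. Your care with the index shifts and with supplying the root-level bijection of singletons is appropriate but does not constitute a different route.
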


\section{Isomorphism of the 3 trees}\label{sect:trees}

In \cite{ranking3}, the tree $\mathbb{T}_N^{\Interdon{},\trimapdon{}}$
was referred to as $\mathbb{T}_N^1$ (with replacing
each vertex $v \in \Interdon{k-1}\ (k=1, \ldots, N+1)$ by
$\yastdon{k-1}(v) \in \Lsetitimonozerodon{k+1}$ bijectively) and 
 $\mathbb{T}_N^{\Lsetitimonozerodon{},\upmapzerodon{}}$ was
 referred to as $\mathbb{T}_N^2$. By using a computer program,
 it was verified that 
 $\mathbb{T}_N^1$ and $\mathbb{T}_N^2$ are isomorphic as
 rooted trees upto $N=1000$.

A major part of the
 result of this paper is the following:
\begin{theo}\label{theo:main}
  For each nonnegative integer $N$, it holds that
    $\mathbb{T}_N^{\TYngdon{},\phidon{}}
 = \mathbb{T}_N^{\Lsetitimonozerodon{},\upmapzerodon{}}$.
\end{theo}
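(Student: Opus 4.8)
The plan is to prove the stronger statement actually claimed—that the two trees are \emph{equal}, not merely isomorphic—by showing that at every level their vertex sets coincide and that the two defining inter-level surjections agree on the common vertices. Since the inclusion $\TYngdon{k}\subseteq\Lsetitimonozerodon{k}$ is already available (Fact \ref{fact:Lincl}) and both trees are rooted at $(1,1,\myv{1}{1})$, the whole assertion reduces to: (a) the maps $\phidon{k}$ and $\upmapzerodon{k}$ agree on $\TYngdon{k}$, and (b) no vertex is lost, i.e.\ $\TYngdon{k}=\Lsetitimonozerodon{k}$ for all $k$.

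The core step is the identity $\phidon{k}=\upmapzerodon{k}|_{\TYngdon{k}}$. I would prove it pointwise on a Young terminal pair $(s,t,\myv{m}{n})=\yast{m-1}{n-1}((a,b))$, where $\xi\in(a,b)\in\Gint{m-1}{n-1}$ and $(s,t)=(\tau\myv{m}{1},\tau\myv{1}{n})$ with $\tau=\tau_{\nyh{1}{\xi}}^{\nyh{m}{n}}$. First treat the case $s>t$, which is equivalent to $\xi<\tfrac{m-1}{n-1}$, hence to $b\le\tfrac{m-1}{n-1}$. Then $\trimapdon{}$ sends $((a,b),\myv{m-1}{n-1})$ to an interval $(a,b')$ of \order $\myv{m-2}{n-1}$ with $b'\ge b>\xi$ (by the first two branches of \eqref{eq:trimapdef} and the monotonicity note in Definition \ref{defi:trimap}), so $\xi\in(a,b')$ remains a legal evaluation point for $\yast{m-2}{n-1}$. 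Using the \emph{same} $\xi$ (well-definedness of $\yast{}$) I would compute directly from \eqref{eq:taudefxi} that $\tau_{\nyh{1}{\xi}}^{\nyh{m-1}{n}}\myv{1}{n}=t$ (the terms with first coordinate $m$ contribute nothing, as $\xi<\tfrac{m-1}{n-1}$ forces $(n-t')\xi<m-1$ for all $t'\in[n]$) and $\tau_{\nyh{1}{\xi}}^{\nyh{m-1}{n}}\myv{m-1}{1}=s-n$ (the $n$ terms with first coordinate $1$ all contribute, via the shift $s'\mapsto s'+1$). This yields $\phidon{}(s,t,\myv{m}{n})=(s-n,t,\myv{m-1}{n})$, matching the first branch of \eqref{eq:upmapzero}. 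The case $s<t$ then follows for free by transposition, since $\phidon{}$ and $\upmapzerodon{}$ both commute with ${}^{t}(\cdot)$ (Propositions \ref{prop:tptrimap}, \ref{prop:tpupmap}, \ref{prop:tpyast}); the case $s=t$ cannot occur for $m+n\ge 3$ by injectivity of $\tau$.

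With the identity in hand I would finish by induction on the level, the base $\TYngdon{2}=\{(1,1,\myv{1}{1})\}=\Lsetitimonozerodon{2}$ being trivial. By surjectivity of both inter-level maps it suffices, for the step, to show that each common vertex $v=(s,t,\myv{m}{n})\in\TYngdon{k}=\Lsetitimonozerodon{k}$ has the same children in both trees. The inclusion $\phidon{}^{-1}(\{v\})\subseteq\upmapzerodon{}^{-1}(\{v\})$ is immediate from the identity, and Fact \ref{fact:invupmap} then forces the horizontal (resp.\ vertical) $\phidon{}$-child, when present, to be $(s+n,t,\myv{m+1}{n})$ (resp.\ $(s,t+m,\myv{m}{n+1})$). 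The reverse inclusion—\textbf{the main obstacle}—amounts to matching the existence conditions exactly: via Proposition \ref{prop:isom13} and Fact \ref{fact:invtrimap} (with $(m,n)\mapsto(m-1,n-1)$) the horizontal child exists iff $a<\tfrac{m}{n-1}$ and the vertical child iff $b>\tfrac{m-1}{n}$, and these must be shown to coincide with $s-t>-n$ and $s-t<m$. The forward direction of each equivalence is already delivered by the identity; the converse requires an honest evaluation of $s-t=\tau\myv{m}{1}-\tau\myv{1}{n}$ as a step function of $\xi$, which is monotone decreasing (raising $\xi$ only lowers the rank of $\myv{m}{1}$ and raises that of $\myv{1}{n}$), together with a check that its crossing of the value $-n$ occurs precisely where the left endpoint $a$ crosses $\tfrac{m}{n-1}$. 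By transposition the vertical equivalence reduces to the horizontal one, so a single such computation suffices. Once the child sets agree, the induction gives $\TYngdon{k}=\Lsetitimonozerodon{k}$ for every $k$, and combined with the identity $\phidon{k}=\upmapzerodon{k}|_{\TYngdon{k}}$ this establishes equality of the two rooted trees.
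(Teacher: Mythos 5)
Your overall architecture coincides with the paper's: the same two ingredients (the identity $\phidon{k}=\upmapzerodon{k}|_{\TYngdon{k}}$, which is the paper's Lemma \ref{lemma:phiupmapzero}, and the matching of children needed to propagate $\TYngdon{k}=\Lsetitimonozerodon{k}$ up the levels, which is the paper's Lemma \ref{lem:branch}), assembled by the same level-by-level induction. Your route to the map identity is a legitimate variant: you compute both coordinates of the image terminal pair directly from \eqref{eq:taudefxi}, whereas the paper compares only the differences and invokes Proposition \ref{prop:diffitimonozero} (a terminal pair of difference equation type is determined by $s-t$); either works, though you should also treat the degenerate indices with $(m-1)(n-1)=0$, where $\tfrac{m-1}{n-1}$ is not defined and the first two branches of \eqref{eq:trimapdef} do not apply.

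The genuine gap is in the step you yourself flag as the main obstacle. You assert that it suffices to ``check that the crossing of the value $-n$ occurs precisely where the left endpoint $a$ crosses $\tfrac{m}{n-1}$.'' As stated this is false: by Proposition \ref{prop:fund}(vii), $\delf{m}{n}{\frac{m}{n-1}}=1-n>-n$, so the step function has not yet reached $-n$ at $\tfrac{m}{n-1}$; it only drops to $1-n-(\gcd(m,n-1)-1)$ immediately to the right, and whether that is $\leq -n$ depends on $\gcd(m,n-1)$. The implication you actually need --- that $s-t>-n$ forces $a<\tfrac{m}{n-1}$ --- requires two separate arguments: for $a>\tfrac{m}{n-1}$, that distinct Farey intervals of order $\myv{m-1}{n-1}$ carry distinct Young tables, hence strictly decreasing values of $\delf{m}{n}{\cdot}$, so that $\delf{m}{n}{\xi}<1-n$; and for $a=\tfrac{m}{n-1}$, the arithmetic observation that $\tfrac{m}{n-1}\in\Garey{m-1}{n-1}$ forces $\gcd(m,n-1)>1$, which makes the jump just past $\tfrac{m}{n-1}$ large enough that $\delf{m}{n}{\xi}\leq 1-n-(\gcd(m,n-1)-1)\leq -n$. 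Without this $\gcd$-sensitive analysis (the content of the second equalities in Proposition \ref{prop:fund}(vi),(vii), proved there by a lattice-point count) the two existence conditions do not visibly coincide and the induction cannot close. Everything else in your outline is sound.
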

Combining Theorem \ref{theo:main}
  with Proposition \ref{prop:isom13},
we have the theorem stated in Section \ref{sect:intro} as a corollary:
\begin{coro}\label{coro:isom}
The trees $\mathbb{T}_N^{\Interdon{},\trimapdon{}}$
    and $\mathbb{T}_N^{\Lsetitimonozerodon{},\upmapzerodon{}}$ are isomorphic
    where the correspondence between the vertices are given
    by the bijections $\yastdon{k+1}:\Interdon{k} \to \TYngdon{k+2} = \Lsetitimonozerodon{k+2},$
    $k = 0, 1, \ldots, N$.
\end{coro}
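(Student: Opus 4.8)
The plan is to prove by induction on the level $k$ the two statements (a) $\TYngdon{k+2}=\Lsetitimonozerodon{k+2}$ as sets and (b) $\phidon{k+2}=\upmapzerodon{k+2}$ on this common set; since by Definition \ref{defi:compat} and the two tree definitions a tree is completely determined by its leveled vertex sets together with its inter-level surjections, (a) and (b) for all $k\le N$ give $\mathbb{T}_N^{\TYngdon{},\phidon{}}=\mathbb{T}_N^{\Lsetitimonozerodon{},\upmapzerodon{}}$. The base case is immediate because both level-$0$ sets are $\{(1,1,\myv{1}{1})\}$. For the inductive step I would assume (a) at level $k$; as $\TYngdon{k+3}\subseteq\Lsetitimonozerodon{k+3}$ and both $\phidon{}$ and $\upmapzerodon{}$ are surjective with fibres partitioning their domains (Facts \ref{fact:invtrimap}, \ref{fact:invupmap}), it suffices to show that every common vertex $v=(s,t,\myv{m}{n})$ has the same set of children in the two trees. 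Because all the maps in play commute with transposition (Propositions \ref{prop:tptrimap}, \ref{prop:tpupmap}, \ref{prop:tpyast}) and a Young ranking table is injective so that $s\neq t$, I may assume $s>t$; then the horizontal child is present in both trees and only the vertical child is in doubt.

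Next I would establish the value-matching lemma $\phidon{}(v)=\upmapzero{m}{n}(v)=(s-n,t,\myv{m-1}{n})$. Writing $v=\yast{m-1}{n-1}((a,b))$ and fixing $\xi\in(a,b)$, the numbers $s$ and $t$ are the ranks of the corner values $m+\xi$ and $1+n\xi$ among $\{\,i+j\xi:\myv{i}{j}\in[m]\times[n]\,\}$, and $s>t$ says exactly $\xi<\tfrac{m-1}{n-1}$. Unwinding $\phidon{}=\yastdon{}\circ\trimapdon{}\circ\yastdon{}^{-1}$, this hypothesis puts $(a,b)$ into the first or second case of \eqref{eq:trimapdef}, in each of which the child interval still contains $(a,b)$, so the same $\xi$ computes the child table $\tau_\nyh{1}{\xi}^{\nyh{m-1}{n}}$. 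A one-line lattice count then closes it: deleting the column $i=m$ removes only values exceeding $1+n\xi$ (since $(n-j)\xi\le(n-1)\xi<m-1$), so $\tau_\nyh{1}{\xi}^{\nyh{m-1}{n}}\myv{1}{n}=t$; and the half-open unit strip between $m-1+\xi$ and $m+\xi$ meets $[m-1]\times[n]$ in exactly one point of each column $j=2,\dots,n$, which together with the deleted corner $\myv{m}{1}$ drops the rank of the lower-right corner by exactly $n$, giving $\tau_\nyh{1}{\xi}^{\nyh{m-1}{n}}\myv{m-1}{1}=s-n$. Transposition extends the lemma to $s<t$, so $\phidon{}=\upmapzerodon{}$ holds on all of $\TYngdon{}$. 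In particular every Young child of $v$ maps to $v$ by $\phidon{}=\upmapzero{}$ and hence carries exactly the value prescribed by \eqref{eq:upmapzero}, so the (always present) horizontal child is $(s+n,t,\myv{m+1}{n})$ in both trees.

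The one remaining point, and the crux, is to match the existence of the vertical child, i.e. to show $s-t<m\iff b>\tfrac{m-1}{n}$, so that the difference-equation threshold of Fact \ref{fact:invupmap} coincides with the {\mysur} threshold of Fact \ref{fact:invtrimap}. A convenient reduction is the identity $s+t=mn+1$, which falls out of the central symmetry $\myv{i}{j}\mapsto\myv{m+1-i}{n+1-j}$: this involution carries $\{\text{values}\le m+\xi\}$ bijectively onto $\{\text{values}\ge 1+n\xi\}$, whence $s=mn+1-t$ and $s-t<m$ becomes the one-sided bound $2t>m(n-1)+1$. Evaluating that bound is the genuinely arithmetic step: one computes $t=n+\sum_{d=1}^{n-1}\lfloor d\xi\rfloor$, a Beatty/Farey-type sum, and shows, using that $a,b$ are neighbours in $\Gseq{m-1}{n-1}$, that the resulting $s-t=mn+1-2t$ first drops below $m$ precisely as the right endpoint $b$ passes $\tfrac{m-1}{n}$. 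I expect this existence-matching to be the main obstacle: unlike the value-matching it is a sharp threshold rather than an algebraic identity, so it really needs the finer Stern--Brocot arithmetic of the interval $(a,b)$, whereas the value-matching above is mechanical once the correct $\xi$ is fixed.

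Granting this equivalence, the vertical child appears in tree-$\phidon{}$ and in tree-$\upmapzerodon{}$ under the same condition and, when present, with the same value (again by the value-matching lemma and transposition), so the two child-sets of every $v$ coincide. By surjectivity of $\upmapzerodon{k+3}$ this upgrades $\TYngdon{k+3}\subseteq\Lsetitimonozerodon{k+3}$ to the equality (a) at level $k+1$, while the agreement of the child-generation rules gives (b); the induction is then complete, which is Theorem \ref{theo:main}. Corollary \ref{coro:isom} follows at once by composing with the isomorphism of Proposition \ref{prop:isom13}.
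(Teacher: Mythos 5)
Your overall architecture coincides with the paper's: reduce the corollary to the identity $\mathbb{T}_N^{\TYngdon{},\phidon{}}=\mathbb{T}_N^{\Lsetitimonozerodon{},\upmapzerodon{}}$ (Theorem \ref{theo:main}) and compose with Proposition \ref{prop:isom13}, proving the identity by a level-by-level induction resting on (1) a value-matching lemma ($\phidon{}=\upmapzerodon{}$ on Young terminal pairs, the paper's Lemma \ref{lemma:phiupmapzero}) and (2) an existence-matching lemma for children (the paper's Lemma \ref{lem:branch}), with transposition halving the case analysis. Your value-matching argument --- computing both coordinates of the parent by a direct lattice count in the unit strip between $m-1+\xi$ and $m+\xi$ --- is a legitimate variant of the paper's route, which matches only the differences $s-t$ via the step function $\delf{m}{n}{\cdot}$ and then invokes Proposition \ref{prop:diffitimonozero}; your count is essentially Proposition \ref{prop:fund}(viii).

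The genuine gap is exactly where you flag it: the equivalence $s-t<m\iff b>\tfrac{m-1}{n}$ is asserted, not proved, and the sketch you give of how to prove it would need repair. First, the formula $t=n+\sum_{d=1}^{n-1}\lfloor d\xi\rfloor$ is valid only when $\xi<\tfrac{m}{n-1}$ (otherwise the column counts cap at $m$), so it cannot be used uniformly for the ``only if'' direction. Second, the phrase ``$s-t$ first drops below $m$ precisely as $b$ passes $\tfrac{m-1}{n}$'' hides the real difficulty: $\delf{m}{n}{\cdot}$ is a step function, constant on each interval of $\Gint{m-1}{n-1}$, and its one-sided behaviour at $\xi=\tfrac{m-1}{n}$ carries a correction of $\gcd(m-1,n)-1$ (Proposition \ref{prop:fund}(vi)); when $\gcd(m-1,n)=1$ the value does not cross the threshold at that point as a function of $\xi$ at all --- $\tfrac{m-1}{n}$ then lies in the \emph{interior} of a Farey interval of $\Gint{m-1}{n-1}$, and the statement must be made in terms of the endpoint $b$, not of $\xi$ crossing $\tfrac{m-1}{n}$. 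Moreover, to exclude $b\le\tfrac{m-1}{n}$ while $s-t<m$ one needs \emph{strict} decrease of $\delf{m}{n}{\cdot}$ across distinct Farey intervals, which the paper extracts from the bijectivity of $\yas{m-1}{n-1}$ together with Proposition \ref{prop:fund}(iv); nothing in your sketch supplies this strictness. These points are precisely the content of Proposition \ref{prop:fund}(vi)(vii) and of the proof of Lemma \ref{lem:branch}, i.e.\ the step you defer is the mathematical heart of the theorem rather than a routine Beatty-sum evaluation. (One simplification you could borrow from the paper: the induction needs only the implication that an $\upmapzerodon{}$-child forces a $\phidon{}$-child, since the reverse inclusion $\TYngdon{k+1}\subset\Lsetitimonozerodon{k+1}$ of vertex sets is automatic.)
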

Thus, all the three trees $\mathbb{T}_N^{\Interdon{},\trimapdon{}},
\mathbb{T}_N^{\Lsetitimonozerodon{},\upmapzerodon{}}$ and 
$\mathbb{T}_N^{\TYngdon{},\phidon{}}$ are isomorphic.
  As an example, a figure of the three isomorphic trees for $N=4$
  is given in Appendix \ref{sect:example}.

To prove Theorem \ref{theo:main}, let
us introduce a terminology that is convenient for the classification
of the children of a given vertex in a tree.
\begin{defi}\label{defi:hvchild}
  Let $v = ((a, b), \myv{m}{n})$ is a vertex of $\mathbb{T}_N^{\Interdon{}, \trimapdon{}}.$ The (potentially non-existent and unique if exists)
  child belonging to $\Inter{m+1}{n}$ (resp. $\Inter{m}{n+1}$)
  is referred to as the 
  \textit{horizontal child} (resp. \textit{vertical child}) of $v$.
  For a vertex $v = (s, t, \myv{m}{n})$ of 
  $\mathbb{T}_N^{\Lsetitimonozerodon{},\upmapzerodon{}}$
  (resp. $\mathbb{T}_N^{\TYngdon{},\phidon{}}$),
  its horizontal child in $\Lsetitimonozero{m+1}{n}$
  (resp. $\TYng{m+1}{n}$)
  and vertical child in $\Lsetitimonozero{m}{n+1}$ 
  (resp. $\TYng{m}{n+1}$) are
  defined similarly.
Note that, in any of the trees,
the horizontal (resp. vertical)
child of $\tp{v}$ is the transposition (for generalized Farey intervals/terminal pairs)
of the vertical (resp. horizontal) child
of $v$.
\end{defi}

\subsection{Lemmas and proofs} \label{sect:lemmaproof}
The proof of Theorem \ref{theo:main} depends on
  Lemmas \ref{lemma:phiupmapzero} and \ref{lem:branch} presented
  in this section.
We start with proving some propositions that are useful
for the proof of these lemmas.
The first one is the claim that 
a terminal pair of difference equation type is determined by
their difference.
\begin{prop}\label{prop:diffitimonozero}
  Let $k \geq 2$ and let $m, n$ be positive integers satisfying
  $m + n = k$. Let $v=(s, t, \myv{m}{n})$ and $v'=(s',t',\myv{m}{n})$
  be elements of $\Lsetitimonozero{m}{n}$. If $s - t = s' - t'$, then
  $v = v'.$
\end{prop}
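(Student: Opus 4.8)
The plan is to induct on the level $k = m + n$, using the inter-level map $\upmapzero{m}{n}$ of Definition \ref{defi:upmap} as the engine. The key structural observation is that $\upmapzero{m}{n}$ branches only on the sign of $s - t$, and that it transforms the difference in a way dictated solely by the common index $\myv{m}{n}$ and that sign. Hence I will show that two terminal pairs of \order $\myv{m}{n}$ sharing a common difference are carried by $\upmapzero{m}{n}$ to two terminal pairs that again share a common difference but live one level lower, and then invoke the inductive hypothesis.

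For the base case $k = 2$ the only admissible indices are $m = n = 1$, and $\Lsetitimonozero{1}{1} = \{(1,1,\myv{1}{1})\}$ is a singleton by Fact \ref{fact:invupmap}, so the claim is immediate. For the inductive step, assume the statement at level $k - 1$ and let $v = (s,t,\myv{m}{n})$ and $v' = (s',t',\myv{m}{n})$ lie in $\Lsetitimonozero{m}{n}$ with $m + n = k \geq 3$ and $s - t = s' - t'$. First I would note that $(m,n) \neq (1,1)$, so the corner positions $\myv{m}{1}$ and $\myv{1}{n}$ are distinct; since the underlying injective L-shapes realize $s,t$ (resp. $s',t'$) as the values at these two positions, injectivity forces $s \neq t$ and $s' \neq t'$. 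As $s - t = s' - t' \neq 0$, we get $s > t \Leftrightarrow s' > t'$, so \eqref{eq:upmapzero} applies the \emph{same} case to both. In the case $s > t$ (whence $m \geq 2$, since a terminal pair of \order $\myv{1}{n}$ has $s = 1 \leq t$), we obtain $\upmapzero{m}{n}(v) = (s - n, t, \myv{m-1}{n})$ and $\upmapzero{m}{n}(v') = (s' - n, t', \myv{m-1}{n})$ in $\Lsetitimonozero{m-1}{n}$, with equal differences $(s - n) - t = (s - t) - n = (s' - t') - n = (s' - n) - t'$. Since $(m-1) + n = k - 1$, the inductive hypothesis yields $\upmapzero{m}{n}(v) = \upmapzero{m}{n}(v')$, i.e. $s - n = s' - n$ and $t = t'$, hence $v = v'$. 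The case $s < t$ is handled symmetrically, either directly via $\upmapzero{m}{n}(v) = (s, t - m, \myv{m}{n-1})$ (whose difference is $(s - t) + m$, forcing $n \geq 2$), or by transposing and appealing to Proposition \ref{prop:tpupmap} to reduce it to the case already treated at the same level.

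The argument is essentially routine once set up this way, and I expect no substantive obstacle beyond bookkeeping. The only two points demanding care are the preliminary observations that keep the induction honest: that $s \neq t$ (so that the sign of the difference is well defined and \eqref{eq:upmapzero} branches identically for $v$ and $v'$), and that the relevant component of the codomain of $\upmapzero{m}{n}$ carries positive indices (so the inductive hypothesis genuinely applies at level $k - 1$). Both follow from the elementary remark that a terminal pair of \order $\myv{1}{n}$ or $\myv{m}{1}$ has its first coordinate equal to $1$, so the horizontal (resp. vertical) reduction can only be taken when $m \geq 2$ (resp. $n \geq 2$).
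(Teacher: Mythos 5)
Your proof is correct and follows essentially the same route as the paper: induction on $k$ with base case the singleton $\Lsetitimonozero{1}{1}$, pushing both terminal pairs down one level via $\upmapzero{m}{n}$, observing that the differences transform identically, and reducing the case $s<t$ to $s>t$ by transposition. The extra bookkeeping you supply (that $s\neq t$ for $k\geq 3$ by injectivity of the underlying L-shape, and that $m\geq 2$ when $s>t$ so the inductive hypothesis applies to $\Lsetitimonozero{m-1}{n}$) is left implicit in the paper's argument but is entirely consistent with it.
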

\begin{proof}
  Induction on $k$. For $k = 2$, the only possible case is $m = n = 1$
  and $\Lsetitimonozero{1}{1}$ is a singleton $\{(1,1,\myv{1}{1})\}$,
  hence $v = v'$ obviously holds. Suppose that $k \geq 3$ and
  the assertion is valid
  for $k-1$.
  Suppose that $\myv{m}{n}$ satisfies $m + n = k$ and that
  $v=(s, t, \myv{m}{n}), v'=(s',t',\myv{m}{n}) \in \Lsetitimonozero{m}{n}$
  satisfy $s - t = s' - t'$.
  Consider the case
  $s > t.$ Then, by \eqref{eq:upmapzero} of Definition \ref{defi:upmap}, 
  it holds that $\upmapzero{m}{n}(v) = (s - n, t, \myv{m-1}{n})$.
  Also by
  $s' - t' = s - t > 0$, we have
  $\upmapzero{m}{n}(v') = (s' - n, t', \myv{m-1}{n})$.
  Then the difference of the terminal pair
  $\upmapzero{m}{n}(v)$ is $(s-n) - t = s  - t - n$ which coincides with
  the difference $(s'-n) - t' = s'  - t' - n = s - t - n$ of
  $\upmapzero{m}{n}(v')$.
  By $(m-1)+n=k-1$
  and
  the inductive assumption, we have $\upmapzero{m}{n}(v) = \upmapzero{m}{n}(v')$
  and thus $s-n = s'-n, t = t'$. Therefore, $v = v'$ holds when $s > t$.
  For the remaining case $s< t$,
  taking the transposition $\tp{v}$ and $\tp{v'}$
  for terminal pairs ({}Definition \ref{defi:tpterm})
  reduces the problem to the case $s > t$ (keeping the level $k=m+n$)
  and we have $\tp{v} = \tp{v'},$
  thus $v = v'$. This completes the induction.
\end{proof}

Since the difference is essential to identify a terminal pair of
difference equation type, we represent it as a real function
that is convenient to analyze Young terminal pairs.
\begin{defi}\label{defi:delf}
  For positive integers $m, n$ and
  a positive real number $\xi$, let
  \begin{equation}\label{eq:delfdef}
    \delf{m}{n}{\xi} :=
    \tau_\nyh{1}{\xi}^{ \nyh{m}{n}} \myv{m}{1}
    -
    \tau_\nyh{1}{\xi}^{ \nyh{m}{n}} \myv{1}{n},
  \end{equation}
  where $\tau_\nyh{1}{\xi}^{ \nyh{m}{n}}$ is as in \eqref{eq:taudefxi}
  of Definition \ref{defi:yr}.
\end{defi}
\begin{prop}\label{prop:delfinv}
For positive integers $m, n$ and
a positive real number $\xi$, it holds that
\begin{equation}
\delf{m}{n}{\xi} = -\delf{n}{m}{\xi^{-1}}.
\end{equation}
\end{prop}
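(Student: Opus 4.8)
The plan is to reduce everything to the single symmetry identity for Young ranking tables that already appeared inside the proof of Proposition \ref{prop:tpyast}, namely
$$
\tau_\nyh{1}{\xi}^{\nyh{m}{n}} \myv{i}{j}
= \tau_\nyh{1}{\xi^{-1}}^{\nyh{n}{m}} \myv{j}{i}
\qquad (\myv{i}{j} \in [m]\times[n]),
$$
which follows directly from the definition \eqref{eq:taudefxi} by swapping the two summation indices and dividing each of the inequalities $s + t\xi \leq i + j\xi$ by $\xi > 0$. Since $\delf{m}{n}{\xi}$ is by Definition \ref{defi:delf} the difference of the two corner values $\tau_\nyh{1}{\xi}^{\nyh{m}{n}}\myv{m}{1}$ and $\tau_\nyh{1}{\xi}^{\nyh{m}{n}}\myv{1}{n}$, it suffices to track what the identity does to these two specific corners.

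First I would apply the identity with $\myv{i}{j} = \myv{m}{1}$, obtaining $\tau_\nyh{1}{\xi}^{\nyh{m}{n}}\myv{m}{1} = \tau_\nyh{1}{\xi^{-1}}^{\nyh{n}{m}}\myv{1}{m}$, and then with $\myv{i}{j} = \myv{1}{n}$, obtaining $\tau_\nyh{1}{\xi}^{\nyh{m}{n}}\myv{1}{n} = \tau_\nyh{1}{\xi^{-1}}^{\nyh{n}{m}}\myv{n}{1}$. The crucial observation is that the index swap sends the lower-right corner of the $\myv{m}{n}$ table to the upper-left corner of the $\myv{n}{m}$ table, and vice versa, so subtracting the two displayed equalities yields
$$
\delf{m}{n}{\xi}
= \tau_\nyh{1}{\xi^{-1}}^{\nyh{n}{m}}\myv{1}{m} - \tau_\nyh{1}{\xi^{-1}}^{\nyh{n}{m}}\myv{n}{1}.
$$
Comparing the right-hand side with $\delf{n}{m}{\xi^{-1}} = \tau_\nyh{1}{\xi^{-1}}^{\nyh{n}{m}}\myv{n}{1} - \tau_\nyh{1}{\xi^{-1}}^{\nyh{n}{m}}\myv{1}{m}$, again read off from Definition \ref{defi:delf}, shows that the two expressions differ exactly by an overall sign, which is the claimed identity $\delf{m}{n}{\xi} = -\delf{n}{m}{\xi^{-1}}$.

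There is essentially no real obstacle here: the statement is an immediate corollary of the transposition behaviour of the ranking tables that was already used to prove Proposition \ref{prop:tpyast}. The only point demanding mild care is the bookkeeping of which corner maps to which under the $\myv{i}{j}\mapsto\myv{j}{i}$ swap, since it is precisely the fact that the two corners $\myv{m}{1}$ and $\myv{1}{n}$ get interchanged (rather than fixed) that produces the minus sign. Because both $m$ and $n$ are positive and $\xi > 0$ forces $\xi^{-1} > 0$, the tables $\tau_\nyh{1}{\xi^{-1}}^{\nyh{n}{m}}$ are well defined throughout, so no boundary or degeneracy cases arise.
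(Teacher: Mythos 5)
Your proposal is correct and follows essentially the same route as the paper: the paper's one-line proof invokes the identity \eqref{eq:tptau} from the proof of Proposition \ref{prop:tpyast}, which is exactly the corner-swap relation $\tau_\nyh{1}{\xi}^{\nyh{m}{n}}\myv{i}{j} = \tau_\nyh{1}{\xi^{-1}}^{\nyh{n}{m}}\myv{j}{i}$ you derive and apply at the two corners $\myv{m}{1}$ and $\myv{1}{n}$. You have merely spelled out the bookkeeping that the paper leaves implicit.
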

\begin{proof}
  Rewriting RHS by using \eqref{eq:tptau} in the proof of
  Proposition \ref{prop:tpyast}, the assertion follows immediately.
\end{proof}
By %
rearranging the inequality in the Iverson-Knuth
brackets of \eqref{eq:taudefxi}, we have
\begin{equation}
  \tau_\nyh{1}{\xi}^{ \nyh{m}{n}} \myv{m}{1}
  =   \sum_{s=1}^{m} \sum_{t=1}^{n} \myChi{s \leq m + \xi (1-t)},  \quad
  \tau_\nyh{1}{\xi}^{ \nyh{m}{n}} \myv{1}{n}
  =   \sum_{s=1}^{m} \sum_{t=1}^{n} \myChi{s \leq 1 + \xi (n-t)},
\label{eq:xiterminal}
\end{equation}
respectively. This presentation gives the following
fundamental properties of
$\tau_\nyh{1}{\xi}^{ \nyh{m}{n}} \myv{m}{1},
\tau_\nyh{1}{\xi}^{ \nyh{m}{n}} \myv{1}{n}$ and
$\delf{m}{n}{\xi}$.
\begin{prop}\label{prop:fund}
  Let $m, n$ be positive integers with $m+n \geq 3$
  and let $\xi \in (0, \infty).$ Then,
\begin{itemize}
\item[(i)] $\tau_\nyh{1}{\xi}^{ \nyh{m}{n}} \myv{m}{1}$ is a non-increasing
  function of $\xi$.
\item[(ii)] $\tau_\nyh{1}{\xi}^{ \nyh{m}{n}} \myv{1}{n}$ is a non-decreasing
  function of $\xi$.
\item[(iii)] $\delf{m}{n}{\xi}$ is a non-increasing
  function of $\xi$.
\item[(iv)]
  $\delf{m}{n}{\xi} = \delf{m}{n}{\xi'}$ implies
  $(\tau_\nyh{1}{\xi}^{ \nyh{m}{n}} \myv{m}{1},
  \tau_\nyh{1}{\xi}^{ \nyh{m}{n}} \myv{1}{n},
  \myv{m}{n}) =
  (\tau_\nyh{1}{\xi'}^{ \nyh{m}{n}} \myv{m}{1},
  \tau_\nyh{1}{\xi'}^{ \nyh{m}{n}} \myv{1}{n},
  \myv{m}{n})$.
\item[(v)] If $n \geq 2$, then $\delf{m}{n}{\frac{m-1}{n-1}} = 0$.
\item[(vi)]
  If $m \geq 2$, then
  \begin{equation}
    \delf{m}{n}{\frac{m-1}{n}} = m - 1.
    \label{eq:vifirst}
  \end{equation}  
  Furthermore, it holds that
  \begin{equation}
    \delf{m}{n}{\frac{m-1}{n} - \omicron} = m-1+(\gcd(m-1,n)-1),
    \label{eq:visecond}
  \end{equation}
  where $x - \omicron$ means $x - h$ with sufficiently small $h > 0$.
  (More formally, LHS $\delf{m}{n}{\frac{m-1}{n\rule{0mm}{1.42ex}} - \omicron}$
   is written as $\lim_{h \downarrow 0} \delf{m}{n}{\frac{m-1}{n\rule{0mm}{1.42ex}} - h}$).
 \item[(vii)]  If $n \geq 2$, then
   \begin{equation}
   \delf{m}{n}{\frac{m}{n-1}} = 1-n. \label{eq:viifirst}
  \end{equation}
   Furthermore, it holds that
   \begin{equation}
     \delf{m}{n}{\frac{m}{n-1} + \omicron} = 1-n-(\gcd(m,n-1)-1),
\label{eq:viisecond}
   \end{equation}
  where $x + \omicron$ means $x + h$ with sufficiently small $h > 0$.
  (More formally, LHS $\delf{m}{n}{\frac{m}{n-1} + \omicron}$
   is written as $\lim_{h \downarrow 0} \delf{m}{n}{\frac{m}{n-1} + h}$).
\item[(viii)] If $n \geq 2$ and $\xi < \frac{m}{n-1}$, then
  $\delf{m+1}{n}{\xi} - \delf{m}{n}{\xi} = n $.
\end{itemize}
\end{prop}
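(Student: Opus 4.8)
The plan is to reduce every item to the per-cell rewriting \eqref{eq:xiterminal}, which expresses $\tau_\nyh{1}{\xi}^{\nyh{m}{n}}\myv{m}{1}$ and $\tau_\nyh{1}{\xi}^{\nyh{m}{n}}\myv{1}{n}$ as sums of the brackets $\myChi{s \leq m + \xi(1-t)}$ and $\myChi{s \leq 1 + \xi(n-t)}$. For (i) I would note that the summand $\myChi{s \leq m + \xi(1-t)}$ is constant in $\xi$ when $t = 1$ and non-increasing when $t \geq 2$, since its threshold $m+\xi(1-t)$ then decreases in $\xi$; summing preserves monotonicity. Part (ii) is the mirror statement, the threshold $1+\xi(n-t)$ being non-decreasing, and (iii) is the difference of a non-increasing and a non-decreasing function. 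For (iv), for $\xi \leq \xi'$ I would write
\begin{equation*}
\delf{m}{n}{\xi} - \delf{m}{n}{\xi'} = \bigl(\tau_\nyh{1}{\xi}^{\nyh{m}{n}}\myv{m}{1} - \tau_\nyh{1}{\xi'}^{\nyh{m}{n}}\myv{m}{1}\bigr) + \bigl(\tau_\nyh{1}{\xi'}^{\nyh{m}{n}}\myv{1}{n} - \tau_\nyh{1}{\xi}^{\nyh{m}{n}}\myv{1}{n}\bigr),
\end{equation*}
a sum of two nonnegative terms by (i) and (ii); equality of the two $\Delta$-values then forces each term to vanish, so both $\tau$-coordinates agree and the terminal pairs coincide.

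For (v)--(vii) the key observation is that the two thresholds differ by the cell-independent amount $(m+\xi(1-t)) - (1+\xi(n-t)) = (m-1) - \xi(n-1)$, so that
\begin{equation*}
\delf{m}{n}{\xi} = \sum_{s \in [m]}\sum_{t \in [n]}\left(\myChi{s \leq m + \xi(1-t)} - \myChi{s \leq 1 + \xi(n-t)}\right).
\end{equation*}
At $\xi = \frac{m-1}{n-1}$ this common difference is $0$, so every summand vanishes and (v) follows. For \eqref{eq:vifirst} I would take $\xi = \frac{m-1}{n}$, where the threshold difference equals $\xi>0$ and each summand becomes $\myChi{1+\xi(n-t) < s \leq m+\xi(1-t)}$; the substitution $s = m-r$ rewrites its support as $r \in [\xi(t-1),\xi t)$, and as $t$ runs over $[n]$ these half-open intervals tile $[0,m-1)$, so exactly the $m-1$ integers $r \in \{0,\dots,m-2\}$ are counted once each, giving $\delf{m}{n}{\frac{m-1}{n}} = m-1$.

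The delicate point, and the one I expect to be the main obstacle, is \eqref{eq:visecond}. Taking $\xi = \frac{m-1}{n} - \omicron$, the same substitution gives for each $t$ the interval $[\xi(t-1),\, m-1-\xi(n-t))$; these no longer tile but overlap, consecutive intervals $t$ and $t+1$ sharing a window of length $m-1-\xi n = \omicron\cdot n$ that shrinks to the point $\frac{(m-1)t}{n}$. The care needed is to check that for all sufficiently small perturbation (uniformly, since there are finitely many cells and each noncritical integer sits a fixed positive distance inside its tiling interval) a noncritical integer lies in exactly one interval, whereas an integer equal to some $\frac{(m-1)t}{n}$ with $1 \leq t \leq n-1$ lies in exactly the $t$-th and $(t+1)$-th intervals and is thus counted twice. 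The number of such critical boundary integers is the number of $t \in \{1,\dots,n-1\}$ with $n \mid (m-1)t$; writing $d=\gcd(m-1,n)$ this means $\frac{n}{d}\mid t$, giving exactly $d-1$ values. Adding this excess to the base count $m-1$ yields \eqref{eq:visecond}.

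Finally, (vii) I would deduce from (vi) through the reciprocal symmetry $\delf{m}{n}{\xi} = -\delf{n}{m}{\xi^{-1}}$ of Proposition \ref{prop:delfinv}: \eqref{eq:viifirst} is $-\delf{n}{m}{\frac{n-1}{m}} = -(n-1)$ by \eqref{eq:vifirst} with $m$ and $n$ interchanged, and since $\bigl(\frac{m}{n-1}+\omicron\bigr)^{-1} = \frac{n-1}{m}-\omicron$, \eqref{eq:viisecond} is $-\delf{n}{m}{\frac{n-1}{m}-\omicron} = -\bigl((n-1)+\gcd(n-1,m)-1\bigr)$ by \eqref{eq:visecond}. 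Part (viii) is a direct computation from \eqref{eq:xiterminal}: reindexing $s \mapsto s-1$ shows $\tau_\nyh{1}{\xi}^{\nyh{m+1}{n}}\myv{m+1}{1} - \tau_\nyh{1}{\xi}^{\nyh{m}{n}}\myv{m}{1} = \sum_{t\in[n]}\myChi{\xi(t-1) \leq m}$, which equals $n$ once $\xi(n-1) \leq m$, i.e. under the hypothesis $\xi < \frac{m}{n-1}$; the same hypothesis forces $\sum_{t\in[n]}\myChi{m \leq \xi(n-t)} = 0$, so $\tau_\nyh{1}{\xi}^{\nyh{m+1}{n}}\myv{1}{n} = \tau_\nyh{1}{\xi}^{\nyh{m}{n}}\myv{1}{n}$, and subtracting gives the stated difference $n$.
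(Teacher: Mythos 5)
Your proposal is correct, and for items (i)--(v), (vii) and (viii) it follows essentially the same route as the paper: monotonicity of the thresholds in \eqref{eq:xiterminal} for (i)--(iii), the two-nonnegative-terms decomposition for (iv), the coincidence of the thresholds at $\xi=\frac{m-1}{n-1}$ for (v), the reduction of (vii) to (vi) via Proposition \ref{prop:delfinv}, and a direct reindexing computation for (viii). The genuine divergence is in (vi). The paper first changes variables to write $\delf{m}{n}{\xi}$ as a difference of two lattice-point counts over $(\{0\}\cup[m-1])\times(\{0\}\cup[n])$, shows the symmetric part vanishes at $\xi=\frac{m-1}{n}$ because the two right triangles $\triangle\myv{0}{0}\myv{m-1}{n}\myv{m-1}{0}$ and $\triangle\myv{0}{0}\myv{m-1}{n}\myv{0}{n}$ are congruent, and then tracks how the $\gcd(m-1,n)$ lattice points on the diagonal segment leave one triangle under the perturbation $\xi-\omicron$. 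You instead never form the symmetric double sum: you read each summand as membership of $r=m-s$ in the half-open interval $[\xi(t-1),\,m-1-\xi(n-t))$, observe that at $\xi=\frac{m-1}{n}$ these intervals tile $[0,m-1)$ (giving the base count $m-1$), and that under the perturbation consecutive intervals acquire an overlap window shrinking to $\frac{(m-1)t}{n}$, so exactly the integers among these fractions get double-counted; the count of $t\in[n-1]$ with $n\mid(m-1)t$ is $\gcd(m-1,n)-1$. The two arguments are dual counts of the same quantity (lattice points on a segment of slope $\frac{n}{m-1}$ versus integer values of $\frac{(m-1)t}{n}$), but your decomposition is one-dimensional and arguably more elementary, at the cost of having to argue uniformity of the perturbation and the non-contribution of the endpoints $t=0$ and $t=n$, which you do address. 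Both are sound.
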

\begin{proof}
  (i) In the presentation of $\tau_\nyh{1}{\xi}^{ \nyh{m}{n}} \myv{m}{1}$
  in
  \eqref{eq:xiterminal},
  the coefficient $1-t$ of $\xi$ of a summand is not positive.

  (ii) In the presentation of $\tau_\nyh{1}{\xi}^{ \nyh{m}{n}} \myv{1}{n}$
  in
  \eqref{eq:xiterminal},
  the coefficient $n-t$ of $\xi$ of a summand is not negative.

  (iii) It is a direct consequence from (i) and (ii).

  (iv) Without loss of generality, assume $\xi \leq \xi'$.
  Then
  $$
  \delf{m}{n}{\xi} - \delf{m}{n}{\xi'}
  = ( \tau_\nyh{1}{\xi}^{ \nyh{m}{n}} \myv{m}{1}
   - \tau_\nyh{1}{\xi'}^{ \nyh{m}{n}} \myv{m}{1}
   ) +
   ( \tau_\nyh{1}{\xi'}^{ \nyh{m}{n}} \myv{1}{n}
    - \tau_\nyh{1}{\xi}^{ \nyh{m}{n}} \myv{1}{n}
   ),
  $$
    where the value of each of the parentheses are nonnegative
    by (i) and (ii). The assumption forces these values to be $0$.

(v)
  The equation $m + \xi (1-t) = 1 + \xi (n-t)$
  has the solution $\xi =\frac{m-1}{n-1}$ (independent of $t$) when $n \geq 2$.
  Thus, $\xi = \frac{m-1}{n-1}$ implies that
  $\tau_\nyh{1}{\xi}^{ \nyh{m}{n}} \myv{m}{1} =
  \tau_\nyh{1}{\xi}^{ \nyh{m}{n}} \myv{1}{n}$.

  (vi) By the change of variables $s' = m - s$ and $t' = t - 1$, we obtain
$$  
  \tau_\nyh{1}{\xi}^{ \nyh{m}{n}} \myv{m}{1}
   = \sum_{s'=0}^{m-1} \sum_{t'=0}^{n-1} \myChi{m-s' \leq m + \xi (-t')}
 = \sum_{s'=0}^{m-1} \sum_{t'=0}^{n-1} \myChi{t' \xi  \leq s'},
$$
and by $s'' = s - 1$ and $t'' = n - t$, we have
$$  
\tau_\nyh{1}{\xi}^{ \nyh{m}{n}} \myv{1}{n}
  =   \sum_{s''=0}^{m-1} \sum_{t''=0}^{n-1} \myChi{s'' + 1  \leq 1 + \xi t''}
  =   \sum_{s''=0}^{m-1} \sum_{t''=0}^{n-1} \myChi{t'' \xi  \geq s'' }.
$$
  Hence we have
\begin{equation}
\delf{m}{n}{\xi} = \left(\sum_{s=0}^{m-1} \sum_{t=0}^{n-1} \myChi{t \xi \leq s}\right)
- \left(\sum_{s=0}^{m-1} \sum_{t=0}^{n-1} \myChi{t \xi \geq s}\right),
\label{eq:deltabalance0}
\end{equation}
which is rewritten as
\begin{equation}
\delf{m}{n}{\xi}  =
\left(\sum_{s=0}^{m-1} \sum_{t=0}^{n} \myChi{t \xi \leq s}
-
\sum_{s=0}^{m-1} \sum_{t=0}^{n} \myChi{t \xi \geq s}\right) 
-
\left(\sum_{s=0}^{m-1}  \myChi{n \xi \leq s}
-
\sum_{s=0}^{m-1}  \myChi{n \xi \geq s} \right). \label{eq:deltabalance}
\end{equation}
For the special case $\xi = \frac{m-1}{n}$, the value in the first
parentheses of \eqref{eq:deltabalance} is zero, since
\begin{equation}
\sum_{s=0}^{m-1} \sum_{t=0}^{n} \myChi{t \cdot \tfrac{m-1}{n} \leq s}
= \left|\left\{\myv{s}{t} \in (\{0\} \cup [m-1]) \times (\{0\} \cup [n]):
t \cdot \tfrac{m-1}{n} \leq s \right\}\right|,
\end{equation}
which is the number of the lattice points in the right triangle
$\triangle \myv{0}{0}\myv{m-1}{n}\myv{m-1}{0}$ (including the border),
whereas
\begin{equation}
\sum_{s=0}^{m-1} \sum_{t=0}^{n} \myChi{t \cdot \tfrac{m-1}{n} \geq s}
= \left|\left\{\myv{s}{t} \in (\{0\} \cup [m-1]) \times (\{0\} \cup [n]):  t \cdot \tfrac{m-1}{n} \geq s \right\}\right| 
\end{equation}
is the number of the lattice points in the right triangle
$\triangle \myv{0}{0}\myv{m-1}{n}\myv{0}{n}$ (including the border),
which is the image of the former triangle with the
congruent transformation $\myv{s}{t} \mapsto \myv{m-1-s}{n-t}$ 
by which the lattice points over the segment
$\myv{0}{0}\myv{m-1}{n}$ are stable as a set. Therefore,
it follows that
\begin{multline}
  \delf{m}{n}{\frac{m-1}{n}} =
  -
\left(\sum_{s=0}^{m-1}  \myChi{n  \cdot \tfrac{m-1}{n} \leq s}
-
\sum_{s=0}^{m-1}  \myChi{n  \cdot \tfrac{m-1}{n} \geq s} \right) \\
=   -
\left(\sum_{s=0}^{m-1}  \myChi{m-1 \leq s}
-
\sum_{s=0}^{m-1}  \myChi{m-1 \geq s} \right)
= - (1 - m) = m - 1. \label{eq:lastterm}
\end{multline}
The first equality \eqref{eq:vifirst}
has been shown. For the second 
equality \eqref{eq:visecond}, let us evaluate the effect of
the modification from $\xi = \frac{m-1}{n}$ to $\xi - \omicron$ on the first and the second
parentheses of \eqref{eq:deltabalance} respectively.
In the first parentheses, the triangle
$\triangle \myv{0}{0}\myv{m-1}{n}\myv{0}{n}$ loses lattice points
over the segment $\myv{0}{0}\myv{m-1}{n}$ other than the origin,
while $\triangle \myv{0}{0}\myv{m-1}{n}\myv{m-1}{0}$ keeps them.
The number of the lattice points over the segment
$\myv{0}{0}\myv{m-1}{n}$ other than the origin is $\gcd(m-1,n)$.
Thus the effect on the first parentheses of \eqref{eq:deltabalance}
is $+\gcd(m-1,n)$.
The second parentheses 
$\left(\sum_{s=0}^{m-1}  \myChi{m-1 \leq s} - \sum_{s=0}^{m-1}  \myChi{m-1 \geq s} \right)$ changes to
$$
\sum_{s=0}^{m-1}  \myChi{m-1-\omicron' \leq s} - \sum_{s=0}^{m-1}  \myChi{m-1-\omicron' \geq s}, \quad (\omicron' := n \omicron)
$$
where the first sum remain unchanged
however the second sum decreases by one due to the term for $s = m-1$.
Thus, the effect on the second parentheses is $+1$, thus
the effect on RHS of \eqref{eq:lastterm} is $-1$.
In total, the increment of the total value is $\gcd(m-1,n)-1.$

(vii) 
Substituting $\myv{m}{n}$ with $\myv{n}{m}$ in
\eqref{eq:vifirst} yields
$
\delf{n}{m}{\frac{n-1}{m}} = n - 1.
$
Let $\xi = \frac{m}{n-1}.$ By Proposition \ref{prop:delfinv},
it follows that
\begin{equation}
  \delf{m}{n}{\frac{m}{n-1}} = \delf{m}{n}{\xi} =
  - \delf{n}{m}{\xi^{-1}} = -\delf{n}{m}{\frac{n-1}{m}} = 1 - n,
\end{equation}
thus we have \eqref{eq:viifirst}.  To show \eqref{eq:viisecond},
given $o > 0$,
let $p > 0$ be the unique such positive number that
$(\xi + \omicron)^{-1} = \xi^{-1} - p,$ namely,
$p = \frac{\omicron}{\xi (\xi+\omicron)}$.
It holds that $p \rightarrow 0$ as $\omicron \rightarrow 0$.
   Then, by a similar argument
   as above using Proposition \ref{prop:delfinv} and
   \eqref{eq:visecond}, we have
\begin{equation}
  \delf{m}{n}{\frac{m}{n-1} + \omicron}
  = \delf{m}{n}{\xi + \omicron}
  = -\delf{n}{m}{(\xi + \omicron)^{-1}}
  = -\delf{n}{m}{\xi^{-1} - p} 
  = 1-n - (\gcd(n-1,m) - 1).
\end{equation}

(viii) From \eqref{eq:deltabalance0},
it follows that
\begin{equation}
  \delf{m+1}{n}{\xi} - \delf{m}{n}{\xi}
  = \left( \sum_{t=0}^{n-1} \myChi{t \xi \leq m} \right) 
-
 \left(\sum_{t=0}^{n-1} \myChi{t \xi \geq m} \right)
\end{equation}
which is $n - 0$ by the assumption $\xi < \frac{m}{n-1}.$
\end{proof}

The first key lemma is as follows:
\begin{lemma}%
  \label{lemma:phiupmapzero}
For an integer $k \geq 2,$ it
  holds that
$
\phidon{k+1} = \upmapzerodon{k+1}|_{\TYngdon{k+1}}.
$
\end{lemma}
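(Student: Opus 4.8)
The plan is to unwind the definition $\phidon{k+1} = \yastdon{k-1}\circ\trimapdon{k-1}\circ\yastdon{k}^{-1}$ from Definition \ref{defi:compat} and compare the result vertex-by-vertex with $\upmapzerodon{k+1}$, reducing everything to the scalar function $\delf{m}{n}{\cdot}$ studied in Proposition \ref{prop:fund}. Fix a vertex $v = (s,t,\myv{m}{n}) \in \TYng{m}{n}$ with $m + n = k+1 \geq 3$; since $\tau\myv{m}{1} \neq \tau\myv{1}{n}$ for an injective Young ranking table, either $s > t$ or $s < t$. Because each $\phidon{k+1}$ commutes with the transposition (by composing Propositions \ref{prop:tpyast} and \ref{prop:tptrimap}) and each $\upmapzerodon{k+1}$ does as well (Proposition \ref{prop:tpupmap}), while $\tp{v} = (t,s,\myv{n}{m})$ exchanges the two cases, it suffices to treat $s > t$; the case $s < t$ then follows by applying $\tp{\cdot}$ to both sides.

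So assume $s > t$, which forces $m \geq 2$. First I would write $\yastdon{k}^{-1}(v) = ((a,b),\myv{m-1}{n-1})$ and fix $\xi \in (a,b)$, so that by Fact \ref{fact:yas} and Definition \ref{defi:yast} we have $s = \tau_\nyh{1}{\xi}^{\nyh{m}{n}}\myv{m}{1}$, $t = \tau_\nyh{1}{\xi}^{\nyh{m}{n}}\myv{1}{n}$, hence $s - t = \delf{m}{n}{\xi} > 0$. For $n \geq 2$ I would then locate $(a,b)$ relative to the fraction $\frac{m-1}{n-1} \in \Garey{m-1}{n-1}$: Proposition \ref{prop:fund}(v) gives $\delf{m}{n}{\frac{m-1}{n-1}} = 0$, so the monotonicity in Proposition \ref{prop:fund}(iii) together with $\delf{m}{n}{\xi} > 0$ forces $\xi < \frac{m-1}{n-1}$, and since $(a,b)$ is an open Farey interval not containing the fraction $\frac{m-1}{n-1}$, this yields $b \leq \frac{m-1}{n-1}$. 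Thus $((a,b),\myv{m-1}{n-1})$ falls into the first or second case of \eqref{eq:trimapdef} (with $m, n$ replaced by $m-1, n-1$), so $\trimapdon{k-1}(((a,b),\myv{m-1}{n-1})) = ((a,b'),\myv{m-2}{n-1})$ where $b' = b$ or $b' = \min\{c \in \Garey{m-2}{n-1} : a < c\} \geq b$; in either case $\xi \in (a,b')$, so the \emph{same} $\xi$ may be reused. Applying $\yast{m-2}{n-1}$ then yields
\[
\phidon{k+1}(v) = \left(\tau_\nyh{1}{\xi}^{\nyh{m-1}{n}}\myv{m-1}{1}, \tau_\nyh{1}{\xi}^{\nyh{m-1}{n}}\myv{1}{n}, \myv{m-1}{n}\right) \in \TYng{m-1}{n} \subset \Lsetitimonozero{m-1}{n}.
\]

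Now comes the key comparison. Rather than matching both coordinates, I would exploit Proposition \ref{prop:diffitimonozero}: since both $\phidon{k+1}(v)$ and $\upmapzerodon{k+1}(v) = (s-n, t, \myv{m-1}{n})$ (the first line of \eqref{eq:upmapzero}) lie in $\Lsetitimonozero{m-1}{n}$ with $(m-1)+n = k \geq 2$, it is enough to show they have the same difference. The difference of $\phidon{k+1}(v)$ is $\delf{m-1}{n}{\xi}$ by construction, while that of $\upmapzerodon{k+1}(v)$ is $(s-n)-t = \delf{m}{n}{\xi} - n$. By Proposition \ref{prop:fund}(viii) applied with $m$ replaced by $m-1$ (valid since $n \geq 2$ and $\xi < \frac{m-1}{n-1}$), we have $\delf{m}{n}{\xi} - \delf{m-1}{n}{\xi} = n$, so the two differences coincide and Proposition \ref{prop:diffitimonozero} gives $\phidon{k+1}(v) = \upmapzerodon{k+1}(v)$. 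The remaining degenerate subcase $n = 1$ (where $\frac{m-1}{n-1}$ is undefined) I would dispatch by direct computation: $\TYng{m}{1} = \{(m,1,\myv{m}{1})\}$, the boundary rule $\trimap{m-1}{0}$ sends the unique interval to \order $\myv{m-2}{0}$, and $\yast{m-2}{0}$ returns $(m-1,1,\myv{m-1}{1}) = \upmapzero{m}{1}((m,1,\myv{m}{1}))$.

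The conceptual heart is already packaged in Proposition \ref{prop:fund}: once its parts (iii), (v) and (viii) are in hand, the lemma becomes bookkeeping. I expect the main obstacle to be the careful verification that the single parameter $\xi$ chosen in $(a,b)$ still lies in the image interval $(a,b')$ after $\trimapdon{k-1}$ — this is what lets me evaluate $\yast{m-2}{n-1}$ at the same $\xi$ and thereby identify the difference of $\phidon{k+1}(v)$ with $\delf{m-1}{n}{\xi}$ — and, relatedly, the clean translation of the sign condition $s > t$ into the geometric condition $b \leq \frac{m-1}{n-1}$ that selects the correct branch of $\trimap{m-1}{n-1}$.
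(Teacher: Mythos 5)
Your proof is correct and follows essentially the same route as the paper's: both unwind $\phidon{k+1}=\yastdon{k-1}\circ\trimapdon{k-1}\circ\yastdon{k}^{-1}$, reuse the same parameter $\xi$ across the shrinking Farey interval, reduce the comparison to equality of differences via Proposition \ref{prop:diffitimonozero} together with Proposition \ref{prop:fund} (iii), (v), (viii), halve the case analysis by the transposition, and dispatch the degenerate boundary order by direct computation. The only (immaterial) difference is that you take the sign condition $s>t$ as the case hypothesis and derive the geometric condition $b\leq\frac{m-1}{n-1}$, whereas the paper starts from the Farey-interval condition $b\leq\frac{m}{n}$ and derives the sign of the difference.
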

\begin{proof}
Let $k \geq 2$.
To show $\phidon{k+1}(y) = \upmapzerodon{k+1}(y)$
  for an arbitrary $y \in \TYngdon{k+1},$ let
  $z := \yastdon{k}^{-1}(y)$ and let us prove the statement
  $\yastdon{k-1}\circ\trimapdon{k-1}(z) = \upmapzerodon{k+1}\circ\yastdon{k}(z),$
  which implies
  (by the bijectivity of $\yastdon{k}$)
  that
  $
\phidon{k+1}(y) = \yastdon{k-1}\circ\trimapdon{k-1}\circ\yastdon{k}^{-1}(y)
  = \yastdon{k-1}\circ\trimapdon{k-1}(z) = \upmapzerodon{k+1}\circ\yastdon{k}(z)
  = \upmapzerodon{k+1}(y).$

There exist the unique
$m, n \geq 0$ such that $z \in \Inter{m}{n}$
and $m+n=k-1$. %
Then, there 
is the unique $(a,b) \in \Gint{m}{n}$ such that
$z = ((a,b),\myv{m}{n})$. By Definitions \ref{defi:trimap}, \ref{defi:yastdon}
and \ref{defi:yast}, we
compute $\yastdon{k-1}\circ\trimapdon{k-1}(z)$
and $\upmapzerodon{k+1}\circ\yastdon{k}(z)$ to compare them.

First consider the case $mn = 0$. When $m \geq 1$ and $n = 0,$
only $z = ((0,\infty),\myv{m}{0})$ is possible and
we have $\trimapdon{k-1}(z) = ((0,\infty),\myv{m-1}{0}) \in \Inter{m-1}{0}$ from Definition 
\ref{defi:trimap}. Then by taking $\xi = 1 \in (0, \infty)$, we have
$\yastdon{k-1}\circ\trimapdon{k-1}(z) = \yast{m-1}{0}(((0,\infty),\myv{m-1}{0})) = (\tau_{\nyh{1}{1}}^{\nyh{m}{1}}\myv{m}{1}, \tau_{\nyh{1}{1}}^{\nyh{m}{1}}\myv{1}{1}, \myv{m}{1})$.
By \eqref{eq:xiterminal} with $\xi = 1$ and $n=1$,
it follows that $\tau_\nyh{1}{1}^{ \nyh{m}{1}} \myv{m}{1} = m$
and $\tau_\nyh{1}{1}^{ \nyh{m}{1}} \myv{1}{1} = 1$. Thus,
$\yastdon{k-1}\circ\trimapdon{k-1}(z) = $
$(m, 1, \myv{m}{1}).$
On the other hand, by a similar computation,
$\yastdon{k}(z) =
(\tau_\nyh{1}{1}^{ \nyh{m+1}{1}} \myv{m+1}{1},
\tau_\nyh{1}{1}^{ \nyh{m+1}{1}} \myv{1}{1}, \myv{m+1}{1}
  ) =
(m+1, 1, \myv{m+1}{1})$. By $m+1 > 1$ and
Definition \ref{defi:upmap},
$\upmapzerodon{k+1}\circ\yastdon{k}(z) = 
\upmapzero{m+1}{1}((m+1, 1, \myv{m+1}{1})) = (m+1 - 1, 1, \myv{m}{1}) = (m,1,\myv{m}{1}).$ Therefore, $\yastdon{k-1}\circ\trimapdon{k-1}(z) =
\upmapzerodon{k+1}\circ\yastdon{k}(z)
$ for the case $n = 0.$  The case $m = 0 \text{~and~} n \geq 1$ reduces 
to  the case $m \geq 1 \wedge n = 0$  by taking the transposition
for generalized Farey intervals (Definition \ref{defi:tpfarey}).
  Indeed, given $v = ((0, \infty), \myv{0}{n})$,
  the application of the above argument to $\tp{v}$ shows that
  $\yastdon{k-1}\circ\trimapdon{k-1}(\tp{v}) = \upmapzerodon{k+1}\circ\yastdon{k}(\tp{v})$. By Propositions
  \ref{prop:tptrimap},
  \ref{prop:tpyast} and \ref{prop:tpupmap}, we have
  $\tp{(\yastdon{k-1}\circ\trimapdon{k-1}(v))} =
  \tp{(\upmapzerodon{k+1}\circ\yastdon{k}(v))}$.

Next let us consider the case $mn \geq 1$. Consider the subcase
$b \leq \frac{m}{n}$. Then 
by \eqref{eq:trimapdef} of Definition \ref{defi:trimap}, it holds that
$\trimapdon{k-1}(z) = ((a,c),\myv{m-1}{n}) \in \Inter{m-1}{n}$
where $c \geq b$ holds. Thus, if we take $\xi \in (a,b)$ then it also
satisfies $\xi \in (a,c)$ and we have
\begin{equation}
\yastdon{k-1}\circ\trimapdon{k-1}(z) = \yast{m-1}{n}(((a,c), \myv{m-1}{n}))
= (\tau_\nyh{1}{\xi}^{\nyh{m}{n+1}}\myv{m}{1},
\tau_\nyh{1}{\xi}^{\nyh{m}{n+1}}\myv{1}{n+1}, \myv{m}{n+1})
\label{eq:counterclockwise}
\end{equation}
by Definition \ref{defi:yast} and Fact \ref{fact:yas}.
On the other hand, by using the $\xi$ taken above,
$$
\yastdon{k}(z) = (\tau_\nyh{1}{\xi}^{\nyh{m+1}{n+1}}\myv{m+1}{1},
\tau_\nyh{1}{\xi}^{\nyh{m+1}{n+1}}\myv{1}{n+1}, \myv{m+1}{n+1}).
$$
The condition $b \leq \frac{m}{n}$ implies that $\xi < \frac{(m+1)-1}{(n+1)-1}$.
Then by Definition \ref{defi:delf} and
Proposition \ref{prop:fund} (iii),(v),
we have
$$
\tau_\nyh{1}{\xi}^{\nyh{m+1}{n+1}}\myv{m+1}{1} - \tau_\nyh{1}{\xi}^{\nyh{m+1}{n+1}}\myv{1}{n+1} 
= \delf{m+1}{n+1}{\xi}
\geq \delf{m+1}{n+1}{\frac{(m+1)-1}{(n+1)-1}}
= 0,
$$
where LHS is not zero since $\tau_\nyh{1}{\xi}^{\nyh{m+1}{n+1}}$ is
a Young ranking table which is an injective table.
By this inequality and 
Definition \ref{defi:upmap}, we have
\begin{multline}
\upmapzerodon{k+1}\circ\yastdon{k}(z) = 
\upmapzero{m+1}{n+1}((\tau_\nyh{1}{\xi}^{\nyh{m+1}{n+1}}\myv{m+1}{1},
\tau_\nyh{1}{\xi}^{\nyh{m+1}{n+1}}\myv{1}{n+1}, \myv{m+1}{n+1}))  \\ =
(\tau_\nyh{1}{\xi}^{\nyh{m+1}{n+1}}\myv{m+1}{1}-(n+1),
\tau_\nyh{1}{\xi}^{\nyh{m+1}{n+1}}\myv{1}{n+1}, \myv{m}{n+1}).
\label{eq:clockwise}
\end{multline}

The two terminal pairs  \eqref{eq:counterclockwise} and
\eqref{eq:clockwise} have
the common index $\myv{m}{n+1}.$
So, let us compare their diffrences.
For \eqref{eq:counterclockwise}, the difference is 
$
\tau_\nyh{1}{\xi}^{\nyh{m}{n+1}}\myv{m}{1} \allowbreak - 
\tau_\nyh{1}{\xi}^{\nyh{m}{n+1}}\myv{1}{n+1} = \delf{m}{n+1}{\xi}.$
For \eqref{eq:clockwise}, the difference is 
$\tau_\nyh{1}{\xi}^{\nyh{m+1}{n+1}}\myv{m+1}{1} \allowbreak -(n+1)$
$-\tau_\nyh{1}{\xi}^{\nyh{m+1}{n+1}}\myv{1}{n+1} =
\delf{m+1}{n+1}{\xi}-(n+1).
$
By Proposition \ref{prop:fund} (viii) and $\xi < \frac{m}{n} = \frac{m}{(n+1)-1}$,
$\delf{m+1}{n+1}{\xi} - \delf{m}{n+1}{\xi} = n+1.$
Therefore the terminal pairs
$\yastdon{k-1}\circ\trimapdon{k-1}(z)$ and
$\upmapzerodon{k+1}\circ\yastdon{k}(z)$ have the same difference.
Since they are in $\Lsetitimonozero{m}{n+1}$, by Proposition \ref{prop:diffitimonozero}, we conclude that $\yastdon{k-1}\circ\trimapdon{k-1}(z) = \upmapzerodon{k+1}\circ\yastdon{k}(z)$ if $z=((a,b),\myv{m}{n})\in \Inter{m}{n}$
satisfies $b \leq \frac{m}{n}.$ In \eqref{eq:trimapdef} of
Definition \ref{defi:trimap}, there is the other subcase
$a \geq \frac{m}{n}$
to be considered. 
However, for such $v = ((a,b), \myv{m}{n}),$
taking the transposition $((a',b'), \myv{m'}{n'}) := \tp{v} =
((b^{-1}, a^{-1}), \myv{n}{m})$, $b' = a^{-1}$ satisfies
$b' = a^{-1} \leq (\frac{m}{n})^{-1} = \frac{n}{m} = \frac{m'}{n'}$
and the above argument for the subcase
$b \leq \frac{m}{n}$ is applicable
to obtain $\yastdon{k-1}\circ\trimapdon{k-1}(\tp{v}) = \upmapzerodon{k+1}\circ\yastdon{k}(\tp{v})$. By Propositions
  \ref{prop:tptrimap},
  \ref{prop:tpyast} and \ref{prop:tpupmap}, we have
  $\tp{(\yastdon{k-1}\circ\trimapdon{k-1}(v))} =
  \tp{(\upmapzerodon{k+1}\circ\yastdon{k}(v))}$.

Thus, we have $\yastdon{k-1}\circ\trimapdon{k-1}(z)
= \upmapzerodon{k+1}\circ\yastdon{k}(z)$ for $z \in \Interdon{k-1}.$
\end{proof}
We also need another lemma:
\begin{lemma}\label{lem:branch}
      Let $k \geq 2$ be an integer.
      If a vertex $x \in \TYngdon{k} \subset \Lsetitimonozerodon{k}$
      has a horizontal (resp. vertical)
      child $y \in \Lsetitimonozerodon{k+1}$ in the
      tree $\mathbb{T}_N^{\Lsetitimonozerodon{},\upmapzerodon{}},$
      then $y$ satisfies $y \in \TYngdon{k+1}$ and
      is also the unique horizontal (resp. vertical) child
      of $x$ in the tree $\mathbb{T}_N^{\TYngdon{},\phidon{}}$.
\end{lemma}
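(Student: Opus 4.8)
The plan is to prove the horizontal case; the vertical case then follows immediately by applying it to the transposition $\tp{x}$, whose horizontal child is the transposition of the vertical child of $x$ (Definition \ref{defi:hvchild}), using that $\TYngdon{}$ and $\Lsetitimonozerodon{}$ are closed under the transposition and that $\upmapzerodon{}$ and $\phidon{}$ commute with it (Propositions \ref{prop:tpupmap}, \ref{prop:tpyast}). Moreover it suffices to prove only that $y \in \TYngdon{k+1}$: granting this, Lemma \ref{lemma:phiupmapzero} gives $\phidon{k+1}(y) = \upmapzerodon{k+1}(y) = x$, so $y$ is a child of $x$ in $\mathbb{T}_N^{\TYngdon{},\phidon{}}$, and since $y$ has horizontal index one larger than that of $x$ it is a horizontal child there; its uniqueness follows because $\mathbb{T}_N^{\TYngdon{},\phidon{}}$ is binary (Proposition \ref{prop:isom13}).

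So write $x = (s,t,\myv{m}{n}) \in \TYng{m}{n}$ with $m+n = k$. If $n=1$ then $\TYng{m}{1} = \{(m,1,\myv{m}{1})\}$ and $\TYng{m+1}{1} = \{(m+1,1,\myv{m+1}{1})\}$, and the horizontal child $(s+1,t,\myv{m+1}{1}) = (m+1,1,\myv{m+1}{1})$ lies in $\TYng{m+1}{1}$ by inspection; so I assume $n \geq 2$. By Fact \ref{fact:invupmap} the existence of a horizontal child in $\mathbb{T}_N^{\Lsetitimonozerodon{},\upmapzerodon{}}$ forces $s - t > -n$ and $y = (s+n,t,\myv{m+1}{n})$, whose difference is $(s+n)-t = (s-t)+n$. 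Since $x$ is a Young terminal pair, it is realized by a Young ranking table $\tau^{\nyh{m}{n}}_{\nyh{1}{\xi_0}}$, so $s - t = \delf{m}{n}{\xi_0}$; as $\xi_0$ is a non-degenerate parameter, $\delf{m}{n}{\cdot}$ is constant equal to $s-t$ on a whole open Farey interval (Fact \ref{fact:yas}).

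The heart of the argument is to produce a parameter $\eta$ with $0 < \eta < \tfrac{m}{n-1}$ at which $\delf{m}{n}{\eta} = s-t$ and $\tau^{\nyh{m+1}{n}}_{\nyh{1}{\eta}}$ is injective. For then Proposition \ref{prop:fund}(viii) applies and yields $\delf{m+1}{n}{\eta} = \delf{m}{n}{\eta} + n = (s-t)+n$, so that $\theta := \tau^{\nyh{m+1}{n}}_{\nyh{1}{\eta}}$ is a genuine element of $\Yng{m+1}{n}$ whose terminal pair lies in $\TYng{m+1}{n} \subseteq \Lsetitimonozero{m+1}{n}$ and has difference $(s-t)+n$, exactly that of $y$; Proposition \ref{prop:diffitimonozero} then forces $y$ to equal this terminal pair, whence $y \in \TYng{m+1}{n} \subseteq \TYngdon{k+1}$, as required. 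To find $\eta$, I would consider the level set $I = \{\,\xi > 0 : \delf{m}{n}{\xi} = s-t\,\}$, which is an interval by the monotonicity of Proposition \ref{prop:fund}(iii) and is non-degenerate because it contains an open Farey interval, and then select $\eta$ inside $I \cap (0,\tfrac{m}{n-1})$ away from the finitely many parameters at which $\tau^{\nyh{m+1}{n}}$ degenerates.

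The step I expect to be the main obstacle is precisely the verification that $I$ reaches strictly below $\tfrac{m}{n-1}$, i.e. that $I \cap (0,\tfrac{m}{n-1})$ is a non-empty open interval. Since $\delf{m}{n}{\tfrac{m}{n-1}} = 1-n$ by Proposition \ref{prop:fund}(vii) and $s - t \geq 1-n$, when $s-t > 1-n$ this is immediate: monotonicity forces all of $I$ to lie in $(0,\tfrac{m}{n-1})$. The delicate case is the boundary value $s - t = 1-n$, where $\tfrac{m}{n-1} \in I$ and one must exclude the possibility that $I$ begins exactly at $\tfrac{m}{n-1}$. This is settled with the refined jump formula of Proposition \ref{prop:fund}(vii): if $\gcd(m,n-1) \geq 2$ then $\delf{m}{n}{\tfrac{m}{n-1}+\omicron} < 1-n$, so $I$ cannot extend to the right of $\tfrac{m}{n-1}$ and, being non-degenerate, must extend to its left; whereas if $\gcd(m,n-1) = 1$ then $\tfrac{m}{n-1} \notin \Garey{m-1}{n-1}$, so $\tfrac{m}{n-1}$ is interior to a Farey interval on which $\delf{m}{n}{\cdot}$ is constant, and again $I$ extends to its left. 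Either way $\inf I < \tfrac{m}{n-1}$, which is exactly what the construction of $\eta$ requires.
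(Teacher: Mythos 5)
Your proof is correct, and its hard analytic core coincides with the paper's: both arguments reduce the vertical case to the horizontal one via the transposition, both isolate the boundary situation at $\xi = \tfrac{m}{n-1}$, and both resolve it with the same case analysis on $s-t > 1-n$ versus $s-t = 1-n$ and, within the latter, on $\gcd(m,n-1)$, using exactly the monotonicity of $\delf{m}{n}{\cdot}$ and the two equalities of Proposition \ref{prop:fund}(vii). Where you diverge is in the endgame. The paper works on the Farey side: it shows that the left endpoint $a$ of the interval $\yastdon{k-1}^{-1}(x)$ satisfies $a < \tfrac{m}{n-1}$, then invokes Fact \ref{fact:invtrimap} to produce the horizontal child of that interval in $\mathbb{T}_N^{\Interdon{},\trimapdon{}}$ and pushes it forward through $\yast{m}{n-1}$, so injectivity of the resulting table and the identity $\phidon{k+1}(z)=x$ come for free from the commuting square; the identification $z=y$ is then done by uniqueness of horizontal children. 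You instead stay entirely on the analytic side: you locate a parameter $\eta \in I \cap (0,\tfrac{m}{n-1})$ avoiding the finitely many degeneracies, apply Proposition \ref{prop:fund}(viii) to compute the difference of the terminal pair of $\tau^{\nyh{m+1}{n}}_{\nyh{1}{\eta}}$, and conclude $y$ equals that Young terminal pair by the difference-determines-the-pair property (Proposition \ref{prop:diffitimonozero}) — the same machinery the paper reserves for Lemma \ref{lemma:phiupmapzero}. Your route buys a proof that never re-enters the Farey-interval tree in the $n\geq 2$ case and identifies $y$ directly rather than through a uniqueness argument, at the modest cost of having to justify separately that an injective parameter $\eta$ can be chosen (which your non-degenerate-interval observation does handle). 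Both the $n=1$ base case and the reduction of uniqueness to the binary-tree property match the paper's treatment.
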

\begin{proof}
For given $k \geq 2,$ take $x \in \TYngdon{k}$ arbitrarily.
Let $\myv{m}{n}$
be the unique pair such that $m + n = k, m, n \geq 1$ and $x \in \TYng{m}{n}$.
Let $((a,b),\myv{m-1}{n-1}) = \yastdon{k-1}^{-1}(x) \in \Inter{m-1}{n-1}.$
Once the statement
\begin{equation}
  (\upmapzerodon{k+1}^{-1}(\{x\}) \cap \Lsetitimonozero{m+1}{n} \neq \emptyset)
  \quad\Rightarrow\quad
  (\phidon{k+1}^{-1}(\{x\}) \cap \TYng{m+1}{n} \neq \emptyset)
\label{eq:existential}  
\end{equation}
is proved, the assertion on the horizontal child
follows readily: Let $y$ be in
$\upmapzerodon{k+1}^{-1}(\{x\}) \cap \Lsetitimonozero{m+1}{n}$
(i.e. the unique horizontal child of $x$ in the tree
$\mathbb{T}_N^{\Lsetitimonozerodon{},\upmapzerodon{}}$ as in the note after Definition \ref{defi:upmaptree})
and 
let $z$ be in
$\phidon{k+1}^{-1}(\{x\}) \cap \TYng{m+1}{n}$
(i.e. the unique horizontal child of $x$ in the tree
$\mathbb{T}_N^{\TYngdon,\phidon{}}$ by Proposition \ref{prop:isom13} and the note after Definition \ref{defi:trimaptree}),
whose existence is implied by the existence of $y$ and
\eqref{eq:existential}. By Fact \ref{fact:Lincl} and
Lemma \ref{lemma:phiupmapzero},
It follows that $\upmapzerodon{k+1}(z)
= \phidon{k+1}(z).$ Since $z \in \phidon{k+1}^{-1}(\{x\}),$
we have $\phidon{k+1}(z) = x$ and thus $\upmapzerodon{k+1}(z)
= x$. This means,
with the assumption
$z \in \TYng{m+1}{n} \subset \Lsetitimonozero{m+1}{n}$, that
$z$ is a horizontal child of $x$ in the tree
$\mathbb{T}_N^{\Lsetitimonozerodon{},\upmapzerodon{}}$. However, 
$y$ is the unique horizontal
     child of $x$ in $\mathbb{T}_N^{\Lsetitimonozerodon{},\upmapzerodon{}}$.
Thus, we have the equality $y = z$ and 
$y \in \phidon{k+1}^{-1}(\{x\}) \cap \TYng{m+1}{n}$, the assertion
on the horizontal child.
In the sequel, we shall prove the statement \eqref{eq:existential}.

First, consider the case $n = 1.$ In this case, $x = (m,1,\myv{m}{1})$
is the unique vertex
and $\yastdon{k-1}^{-1}(x)$ is
  $((0,\infty),\myv{m-1}{0})$, as we have
  seen in the computation of $
  \yastdon{k-1}\circ\trimapdon{k-1}(((0, \infty), \myv{m}{0})) = $
  \\
  $\yast{m-1}{0}(((0,\infty),\myv{m-1}{0}))$
in 
the proof of Lemma \ref{lemma:phiupmapzero}.
By Fact \ref{fact:invtrimap},
the horizontal child of $\yastdon{k-1}^{-1}(x)$ is
$((0,\infty), \myv{1}{0})$ if $m=1$; $((0,\infty), \myv{m}{0})$ otherwise.
In both cases, $\yastdon{k}$ maps this horizontal child to
$z := (\tau_{\nyh{1}{\xi}}^{\nyh{m+1}{1}}\myv{m+1}{1},
\tau_{\nyh{1}{\xi}}^{\nyh{m+1}{1}}\myv{1}{1}, \myv{m+1}{1})$
where $\xi$ can be taken in the open interval $(0,\infty)$.
This $z$ is
  in
  $\yastdon{k}\circ\trimapdon{k-1}^{-1}\circ\yastdon{k-1}^{-1}(\{x\}) \cap \TYng{m+1}{1} = \phidon{k+1}^{-1}(\{x\}) \cap \TYng{m+1}{1}$
  and the desired existence has been established.

Next assume that $n \geq 2.$
With an arbitrary $\xi \in (a, b)$, $x$ is written as
$x = (\tau_{\nyh{1}{\xi}}^{\nyh{m}{n}}\myv{m}{1},
  \tau_{\nyh{1}{\xi}}^{\nyh{m}{n}}\myv{1}{n}, \myv{m}{n})$, however
    we take $\xi = a + \varepsilon$ where $\varepsilon > 0$
    is small enough that there is no element of $\Garey{m}{n}$
    (which includes any of $\Garey{m-1}{n}, \Garey{m}{n-1}, \Garey{m-1}{n-1})$
    in the half-open interval $(a, a+\varepsilon]$.
  Suppose that $x$ has the horizontal child in the tree
  $\mathbb{T}_N^{\Lsetitimonozerodon{},\upmapzerodon{} }$.
  By Definition \ref{defi:delf} and 
  Fact \ref{fact:invupmap}, this means that
  $\delf{m}{n}{\xi} = 
  \tau_{\nyh{1}{\xi}}^{\nyh{m}{n}}\myv{m}{1} - \tau_{\nyh{1}{\xi}}^{\nyh{m}{n}}\myv{1}{n}
  > -n.$ Since LHS is an integer, we have $\delf{m}{n}{\xi} \geq -n + 1$.
  Now we claim that $a > \frac{m}{n-1}$ is impossible.
  To obtain contradiction, suppose that $a > \frac{m}{n-1}$.
  Since $a \in \Garey{m-1}{n-1}$ is a boundary of a
  generalized Farey interval,
  for arbitrarily small $\eta > 0$ (which can be taken smaller than
  the minimum length of the elements of $\Gint{m-1}{n-1}$)
  $\tau_{\nyh{1}{a-\eta}}^{\nyh{m}{n}}$ and
    $\tau_{\nyh{1}{\xi}}^{\nyh{m}{n}}$ must be different because of
  the bijectivity of $\yas{m-1}{n-1}$.
  Then by Proposition \ref{prop:fund} (iv), we have
  $ \delf{m}{n}{a-\eta} \neq \delf{m}{n}{\xi}$.
  By Proposition \ref{prop:fund} (iii) and $\frac{m}{n-1} \leq a - \eta <
  a < \xi$,
  this means that $\delf{m}{n}{\frac{m}{n-1}} \geq
  \delf{m}{n}{a - \eta} > \delf{m}{n}{\xi}$,
  where $\delf{m}{n}{\frac{m}{n-1}} = -n + 1$ by the first equality
  \eqref{eq:viifirst}
  of Proposition \ref{prop:fund} (vii). However, this contradicts
  to $\delf{m}{n}{\xi} \geq -n + 1.$ Therefore
  $((a,b),\myv{m-1}{n-1}) \in \Inter{m-1}{n-1}$ should satisfy
  $a \leq \frac{m}{n-1}.$ Further, we claim that $a \neq \frac{m}{n-1}$.
  Suppose that $a = \frac{m}{n-1}$. Since
  $a \in \Garey{m-1}{n-1}$ and $m > m-1$, this happens only if
  $\gcd(m,n-1) > 1$. By  $a < \xi$,
  Proposition \ref{prop:fund} (iii) and the second equality
  \eqref{eq:viisecond}
  of (vii),
  where $\omicron$ is such positive quantity that is smaller than
  any predetermined positive number like $\varepsilon = \xi - a$,
  we have $\delf{m}{n}{a+\omicron} = \delf{m}{n}{\frac{m}{n-1}+\omicron}
  = -n+1-\gcd(m,n-1)+1   \geq
  \delf{m}{n}{\xi} \geq -n + 1$, which implies $1 - \gcd(m,n-1) \geq 0$,
  contradiction.

   Thus $a < \frac{m}{n-1}$ holds. By Fact \ref{fact:invtrimap},
   $((a,b),\myv{m-1}{n-1}) \in \Inter{m-1}{n-1}$ has a horizontal
   child of the form $((a,*),\myv{m}{n-1})$ where $*$ may be
   smaller than $b$. However, we took $\xi = a + \varepsilon$ with
   sufficiently small $\varepsilon > 0$, so $\xi$ remains in $(a,*)$.
   Therefore, the image of this horizontal child by the bijection
   $\yast{m}{n-1}$ can be written as
   $(\tau_{\nyh{1}{\xi}}^{\nyh{m+1}{n}}\myv{m+1}{1},
   \tau_{\nyh{1}{\xi}}^{\nyh{m+1}{n}}\myv{1}{n}, \myv{m+1}{n})$.
   Let $z := (\tau_{\nyh{1}{\xi}}^{\nyh{m+1}{n}}\myv{m+1}{1},
   \tau_{\nyh{1}{\xi}}^{\nyh{m+1}{n}}\myv{1}{n}, \myv{m+1}{n})$
   which belongs to $\TYng{m+1}{n} \subset\TYngdon{k+1}$
   by the definition of
   $\yast{m}{n-1}$.
   By %
   Definition \ref{defi:compat},
   $\phidon{k+1}(z)
      = \phidon{k+1}\circ\yastdon{k}(((a,*),\myv{m}{n-1}))
   = \yastdon{k-1}\circ\trimapdon{k-1}(((a,*),\myv{m}{n-1}))
   = \yastdon{k-1}(((a,b),\myv{m-1}{n-1})) = x$.
   Thus, $z$
   is a horizontal child of $x$ in $\mathbb{T}_N^{\TYngdon{},\phidon{}}$,
   the witness of the desired non-emptiness of $\phidon{k+1}^{-1}(\{x\}) \cap \TYng{m+1}{n}$.

   For the vertical child of $x$ by $\upmapzerodon{k+1},$
   one may repeat a similar argument by using Proposition \ref{prop:fund} (vi),
   however, taking the transposition $\tp{x}$ for terminal pairs
   reduces the problem to
   the horizontal child case.
   Namely, we have that
     if $\tp{x}$ has a horizontal child in
     $\mathbb{T}_N^{\Lsetitimonozerodon{},\upmapzerodon{}}$,
     then it is also a horizontal child of $\tp{x}$
     in $\mathbb{T}_N^{\TYngdon{},\phidon{}}$. Thus,
     if $x$ has a vertical child in
     $\mathbb{T}_N^{\Lsetitimonozerodon{},\upmapzerodon{}}$,
     then it is also a vertical child of $x$
     in $\mathbb{T}_N^{\TYngdon{},\phidon{}}$. 
     This completes the proof.
\end{proof}
\subsection{Proof of Theorem \ref{theo:main} }
The proof of Theorem \ref{theo:main} is presented below.
\begin{proof}
By Lemma \ref{lem:branch}, the equalities between the sets
    $\Lsetitimonozerodon{k}$ and $\TYngdon{k}$
    of the level $k-2$ vertices
    for $k = 2, 3, \ldots, N + 1$
    are proved inductively: For $k = 2,$
    it holds that $\TYngdon{2} = \Lsetitimonozerodon{2} = \{ (1,1,\myv{1}{1})\}$.
    Suppose that $k < N+1$ and that $\TYngdon{k} = \Lsetitimonozerodon{k}$.
    By the definitions of the inverse images
    of the maps
    $\phidon{k+1}:\TYngdon{k+1} \to \TYngdon{k}$ and
    $\upmapzerodon{k+1}: \Lsetitimonozerodon{k+1} \to \Lsetitimonozerodon{k}$,
    we have $\phidon{k+1}^{-1}(\TYngdon{k}) = \TYngdon{k+1}$ and
    $\upmapzerodon{k+1}^{-1}(\Lsetitimonozerodon{k}) = \Lsetitimonozerodon{k+1}.$
    Since all the (potential) children of a vertex are
    horizontal and vertical children of it, Lemma \ref{lem:branch}
    implies that $\upmapzerodon{k+1}^{-1}(\TYngdon{k}) \subset
    \phidon{k+1}^{-1}(\TYngdon{k}) = \TYngdon{k+1}.$
    By the inductive assumption $ \TYngdon{k} = \Lsetitimonozerodon{k},$
    we have $\Lsetitimonozerodon{k+1} =
    \upmapzerodon{k+1}^{-1}(\Lsetitimonozerodon{k})
    = \upmapzerodon{k+1}^{-1}(\TYngdon{k}) \subset \TYngdon{k+1}.$
    This inclusion, together with $\TYngdon{k+1}
    \subset \Lsetitimonozerodon{k+1}$ stated in
    Definition \ref{defi:TYng},
    shows that the equality
    $\Lsetitimonozerodon{k+1} = \TYngdon{k+1}$ holds.
    This completes the induction.

    Thus, the sets of vertices of
    $\mathbb{T}_N^{\Lsetitimonozerodon{},\upmapzerodon{}},$ and
    $\mathbb{T}_N^{\TYngdon{},\phidon{}}$ are the same.
    Lemma \ref{lemma:phiupmapzero} states
    that the inter-level map $\phidon{k+1}$ is just
    a restriction of $\upmapzerodon{k+1}$ to $\TYngdon{k+1}$
    which is in fact $\Lsetitimonozerodon{k+1},$
    hence we have $\phidon{k+1} = \upmapzerodon{k+1}$.
    Thus, the adjacency in $\mathbb{T}_N^{\Lsetitimonozerodon{},\upmapzerodon{}}$
    and the adjacency in $\mathbb{T}_N^{\TYngdon{},\phidon{}}$ are equivalent.
\end{proof}

\subsection{An implication}\label{sect:impl}

Recall the note on Fact \ref{fact:Litimonorecur} 
made in Section \ref{sss:diffeqtype}, just before
the Definition  \ref{defi:termdiff} of $\Lsetitimonozero{m}{n}$,
that 
given the terminal pair $f\myv{m}{1}$ and $f\myv{1}{n}$
of an injective L-shape 
$f \in \Lsetitimono{m}{n}$ of difference equation type,
one can easily determine all the terms of $f$.
Indeed, given 
$x = (f\myv{m}{1}, f\myv{1}{n}, \myv{m}{n} ) \in \Lsetitimonozero{m}{n},$
either of
$(f\myv{m-1}{1}, f\myv{1}{n}, \myv{m-1}{n} ) \in \Lsetitimonozero{m-1}{n}$
or
$(f\myv{m}{1}, f\myv{1}{n-1}, \myv{m}{n-1} ) \in \Lsetitimonozero{m}{n-1}$
is obtained as $\upmapzerodon{m+n}(x)$, depending on the truth value
$\myChi{f\myv{m}{1} \leq f\myv{1}{n}}$. Repeating the application of
$\upmapzerodon{}$ until we reach to $(1,1,\myv{1}{1})$, the sequence
$x, \upmapzerodon{m+n}(x), \upmapzerodon{m+n-1}\circ\upmapzerodon{m+n}(x),
\cdots,
\upmapzerodon{3}\circ \cdots \circ \upmapzerodon{m+n}(x)$
contains all the terms
$(f\myv{i}{j} : \myv{i}{j} \in ([m]\times \{1\}) \cup (\{1\}\times [n]) )$
of the L-shape $f$.
In terms of the tree $\mathbb{T}_N^{\Lsetitimonozerodon{},\upmapzerodon{}}$, the ascending path from the vertex 
$x = (f\myv{m}{1}, f\myv{1}{n}, \myv{m}{n} ) \in \Lsetitimonozero{m}{n}$
to the root $(1,1,\myv{1}{1})$ gives the ``decompression procedure''
from the terminal pair $x$ to the L-shape $f$.
Thus, considering a little redundant presentation of the tree
$\mathbb{T}_N^{\Lsetitimonozerodon{},\upmapzerodon{}}$
where each of vertices $(f\myv{m}{1}, f\myv{1}{n}, \myv{m}{n} )$
is replaced with its whole L-shape
$(f\myv{i}{j} : \myv{i}{j} \in ([m]\times \{1\}) \cup (\{1\}\times [n]) )$
for all $\myv{m}{n}$ such that $m+n \leq N+2$,
Theorem \ref{theo:main} implies the following assertion on
the injective L-shapes $\Lsetitimono{m}{n}$
of difference equation type.

\begin{coro}\label{coro:Lset}
  Let $m$ and $n$ be positive integers. It holds that
  \begin{equation}
    \LYng{m}{n} = \Lsetitimono{m}{n},
  \end{equation}    
  where the common cardinality is $|\Gint{m-1}{n-1}| = |\Garey{m-1}{n-1}| - 1.$ 
\end{coro}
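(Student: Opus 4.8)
The plan is to derive the corollary from Theorem \ref{theo:main} by exploiting the common ``compression'' map that underlies all three families of objects. First I would record that
$$
\psi:\Lsetitimono{m}{n}\to\Lsetitimonozero{m}{n},\qquad
f\mapsto (f\myv{m}{1}, f\myv{1}{n}, \myv{m}{n}),
$$
is a bijection. It is surjective by the very definition of $\Lsetitimonozero{m}{n}$, and it is injective because Fact \ref{fact:Litimonorecur} provides a canonical decompression: starting from the terminal pair and iterating $\upmapzerodon{}$ down to $(1,1,\myv{1}{1})$ recovers, by (i) and (ii) of that fact, every entry of $f$, so the terminal pair determines $f$ uniquely. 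Thus $\psi$ is injective, which is the only property I will actually need.

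Next I would extract the set-level content of Theorem \ref{theo:main}. Its proof establishes the equalities $\TYngdon{k}=\Lsetitimonozerodon{k}$ of sets of tagged triples for every $k$. Since both sides are disjoint unions indexed by the tags $\myv{m}{n}$ with $m+n=k$, and each triple explicitly carries its own tag, comparing the pieces yields $\TYng{m}{n}=\Lsetitimonozero{m}{n}$ for all positive $m,n$. Because $\LYng{m}{n}\subset\Lsetitimono{m}{n}$ by Fact \ref{fact:Lincl}, the restriction of $\psi$ to $\LYng{m}{n}$ agrees with $\psi$, and a direct reading of the definitions of $\LYng{m}{n}$ and $\TYng{m}{n}$ (with Fact \ref{fact:YandLY} identifying $\LYng{m}{n}$ with $\Yng{m}{n}$) shows its image is exactly $\TYng{m}{n}$. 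Hence $\psi(\LYng{m}{n})=\TYng{m}{n}=\Lsetitimonozero{m}{n}=\psi(\Lsetitimono{m}{n})$.

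The set equality then follows formally: for any $f\in\Lsetitimono{m}{n}$ we have $\psi(f)\in\psi(\LYng{m}{n})$, so $\psi(f)=\psi(g)$ for some $g\in\LYng{m}{n}$, and injectivity of $\psi$ forces $f=g\in\LYng{m}{n}$; combined with the reverse inclusion of Fact \ref{fact:Lincl} this gives $\LYng{m}{n}=\Lsetitimono{m}{n}$. For the cardinality I would chain the established bijections: $|\Lsetitimono{m}{n}|=|\LYng{m}{n}|=|\Yng{m}{n}|$ by Fact \ref{fact:YandLY}, and $|\Yng{m}{n}|=|\Gint{m-1}{n-1}|$ by the bijection $\yas{m-1}{n-1}$ of Fact \ref{fact:yas}. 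Finally $|\Gint{m-1}{n-1}|=|\Garey{m-1}{n-1}|-1$, since $\Gint{m-1}{n-1}$ is precisely the collection of open gaps between consecutive terms of the strictly increasing sequence $\Gseq{m-1}{n-1}$, and a list of $|\Garey{m-1}{n-1}|$ distinct reals has one fewer gap than it has terms.

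Since the heavy lifting is entirely inside Theorem \ref{theo:main}, I do not expect a deep obstacle; the only points requiring genuine care are the injectivity of $\psi$ (i.e.\ that the decompression of Fact \ref{fact:Litimonorecur} is single-valued on all of $\Lsetitimono{m}{n}$, not merely on Young L-shapes) and the bookkeeping that $\psi$ restricts to $\LYng{m}{n}$ with image exactly $\TYng{m}{n}$. I would also check that the argument is uniform in the degenerate corners $m=1$ or $n=1$, where $\Garey{m-1}{n-1}=\{0,\infty\}$, $\Gint{m-1}{n-1}$ has the single element $(0,\infty)$, and both sides of the claimed equality reduce to a single L-shape.
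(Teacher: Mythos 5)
Your proposal is correct and follows essentially the same route as the paper: the paper likewise extracts $\TYng{m}{n}=\Lsetitimonozero{m}{n}$ from Theorem \ref{theo:main}, inverts the compression to terminal pairs (your bijection $\psi$, which the paper calls the ``decompression procedure'' based on Fact \ref{fact:Litimonorecur}) to conclude $\LYng{m}{n}=\Lsetitimono{m}{n}$, and obtains the cardinality from the bijection with $\Gint{m-1}{n-1}$ via Proposition \ref{prop:isom13}. Your write-up merely makes explicit the injectivity of $\psi$ and the identification of $\psi(\LYng{m}{n})$ with $\TYng{m}{n}$, which the paper leaves implicit.
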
  
\begin{proof}
  Theorem \ref{theo:main} implies in particular that
  $\TYng{m}{n} = \Lsetitimonozero{m}{n}.$ Applying the decompression
  procedure (i.e., ascending $\mathbb{T}_N^{\Lsetitimonozerodon{},\upmapzerodon{}}$
  from a vertex to the root then replacing the vertex with the resulting
  L-shape obtained from the path)
  on both sides, we have $\LYng{m}{n} = \Lsetitimono{m}{n}$.
  By %
  Proposition \ref{prop:isom13}
  their cardinalities are
  $|\Inter{m-1}{n-1}| = |\Gint{m-1}{n-1}| = |\Garey{m-1}{n-1}|-1$.
\end{proof}  

\begin{rema}\label{rema:farey}
  The set $\Garey{m}{n}$ of Definition \ref{defi:garey} is closely related
  to the following 
  set $\mathcal{F}_{n}^{m}$ of the \textit{generalized Farey fractions}
  defined for $1 \leq m \leq n$
  in Glaisher {\cite[Sect. 6]{Glaisher}}:
  \begin{equation}
    \mathcal{F}_{n}^{m} = \{0\} \cup \left\{\frac{p}{q} \leq 1: p \in [m], q \in [n] \right\}.
  \end{equation}
  Note that it is a subset of the standard Farey fractions
  $\mathcal{F}_{n}$ and that
  $\mathcal{F}_{n}^{n} =  \mathcal{F}_{n}.$
  For the properties and the history
  of the study of $\mathcal{F}_{n}^{m}$, we refer to
  \cite[Section 1]{Matveev}.
For example, a presentation of $|\mathcal{F}_{n}^{m}|$
is given as  \cite[Proposition 1.29]{Matveev}, %
which yields a presentation for $|\Gint{m}{n}|$ readily.
\end{rema}

A further implication in terms of Young ranking tables is
given in Appendix \ref{sect:implranking}.

\newpage
\appendix
\section{The three isomorphic trees for $N=4$.}\label{sect:example}
As an example for Theorem \ref{theo:main} and Corollary \ref{coro:isom},
Fig.~\ref{fig:3trees} shows the isomorphic three trees
$\mathbb{T}_N^{\Interdon{}, \trimapdon{}}$,
$\mathbb{T}_N^{\Lsetitimonozerodon{},\upmapzerodon{}}$
and
$\mathbb{T}_N^{\TYngdon{},\phidon{}}$
of height $N=4$
in one picture.
Each of the squares corresponds to a $\Inter{m-1}{n-1}$, or
$\TYng{m}{n} = \Lsetitimonozero{m}{n}$ associated to it
by yet another {\mysur}'s bijection $\yastdon{}$ (Definition \ref{defi:yastdon}), which is
written as in Fig.~\ref{fig:yast}. Even levels of the tree
are shaded while odd levels are unshaded for visual distinction.
Small circles are the vertices
of the tree, and arrows represent the edges defined by the
inter-level surjections. 
The direction of an arrow is from a child
to its parent. For example, a vertex $((\frac{0}{1},\frac{1}{2}), \myv{1}{2})
\in \Inter{1}{2}$ (%
the circle surrounded by a triangle, 
  near the lower-right corner of the unshaded
  square corresponding to $\Inter{1}{2}$)
has a horizontal child
$((\frac{0}{1},\frac{1}{2}),\myv{2}{2})$ and
a vertical child $((\frac{1}{3},\frac{1}{2}),\myv{1}{3})$.
Corresponding to this,
$(4,3,\myv{2}{3}) \in \TYng{2}{3} = \Lsetitimonozero{2}{3}$
has a horizontal child $(7,3,\myv{3}{3})$ where $7 = 4 + 3,$
and a vertical child $(4,5,\myv{2}{4})$ where $5 = 3 + 2$.
On the other hand, the vertex
$((\frac{1}{1},\infty), \myv{1}{2})
\in \Inter{1}{2}$ (%
the circle surrounded by a square,
near the upper-left corner of the unshaded
  square corresponding to $\Inter{1}{2}$)
only has a vertical child $((\frac{1}{1},\infty),\myv{1}{3}) \in \Inter{1}{3}$.
Corresponding to this,
$(2,5,\myv{2}{3}) \in \TYng{2}{3} = \Lsetitimonozero{2}{3}$
has only a vertical child $(2,7,\myv{2}{4})$ where $7 = 5 + 2.$

Theorem \ref{theo:main} and Corollary \ref{coro:isom}
claim that a vertex depicted can be
interpreted as any of
a generalized Farey interval, a terminal pair
of difference equation type and a Young terminal pair and that
the edges are consistent with any of $\trimapdon{}, \upmapzerodon{}$
and $\phidon{}$. Also,
it is visually observed that the transposition
(for generalized Farey intervals/terminal pairs)
works
as an involution of the tree.
\begin{center}
\begin{figure}
\centering
  \rotatebox{0}{\resizebox{\textwidth}{!}{
   \begin{tikzpicture}[scale=4.8]
     \fill [color=yellow!10] (0,0) rectangle (0+1,0+1);
     \fill [color=yellow!10] (2,0) rectangle (2+1,0+1);
     \fill [color=yellow!10] (1,1) rectangle (1+1,1+1);
     \fill [color=yellow!10] (0,2) rectangle (0+1,2+1);
     \fill [color=yellow!10] (4,0) rectangle (4+1,0+1);
     \fill [color=yellow!10] (3,1) rectangle (3+1,1+1);
     \fill [color=yellow!10] (2,2) rectangle (2+1,2+1);
     \fill [color=yellow!10] (1,3) rectangle (1+1,3+1);
     \fill [color=yellow!10] (0,4) rectangle (0+1,4+1);
     \foreach \a in {0,1,2,3,4} {
       \draw (\a + 0.5, 0) node [below] {$m-1=\a$};
     }
     \foreach \a in {0,1,2,3,4} {
       \draw (0,\a + 0.5) node [left] {$n-1=\a$};
     }
     \foreach \bm/\bn/\cm/\cn in {0/0/1/1,0/1/1/2,1/0/2/1,0/2/1/3,1/1/2/2,2/0/3/1,0/3/1/4,1/2/2/3,2/1/3/2,3/0/4/1,0/4/1/5,1/3/2/4,2/2/3/3,3/1/4/2,4/0/5/1} {
      \draw (\bm + 0.05, \bn + 0.05) node [right] {$\Inter{\bm}{\bn}\to\TYng{\cm}{\cn}$};
     }
     \foreach \a in {0,1,2,3,4,5} {
       \draw[ultra thick,color=green!20] (\a,0) -- (0,\a) {};
       \draw  (\a,0) node [right,rotate=-45] {$k = (m-1) + (n-1) = \a, \Interdon{\a}, \TYngdon{\a + 2} $};
     }
\draw[fill=white] (0.5,0.5) circle (0.01);
\draw (0.5,0.5) node [right] {$\left(\left(\frac{0}{1},\infty\right), \binom{0}{0}\right)\mapsto \left(1,1,\binom{1}{1}\right)$};
\draw [-{Latex[length=5mm]},ultra thick,opacity=0.25,color=red] (1.5,0.5) -- (0.5,0.5);
\draw[fill=white] (1.5,0.5) circle (0.01);
\draw (1.5,0.5) node [right] {$\left(\left(\frac{0}{1},\infty\right), \binom{1}{0}\right)\mapsto \left(2,1,\binom{2}{1}\right)$};
\draw [-{Latex[length=5mm]},ultra thick,opacity=0.25,color=red] (2.5,0.5) -- (1.5,0.5);
\draw[fill=white] (2.5,0.5) circle (0.01);
\draw (2.5,0.5) node [right] {$\left(\left(\frac{0}{1},\infty\right), \binom{2}{0}\right)\mapsto \left(3,1,\binom{3}{1}\right)$};
\draw [-{Latex[length=5mm]},ultra thick,opacity=0.25,color=red] (3.5,0.5) -- (2.5,0.5);
\draw[fill=white] (3.5,0.5) circle (0.01);
\draw (3.5,0.5) node [right] {$\left(\left(\frac{0}{1},\infty\right), \binom{3}{0}\right)\mapsto \left(4,1,\binom{4}{1}\right)$};
\draw [-{Latex[length=5mm]},ultra thick,opacity=0.25,color=red] (4.5,0.5) -- (3.5,0.5);
\draw[fill=white] (4.5,0.5) circle (0.01);
\draw (4.5,0.5) node [right] {$\left(\left(\frac{0}{1},\infty\right), \binom{4}{0}\right)\mapsto \left(5,1,\binom{5}{1}\right)$};
\draw [-{Latex[length=5mm]},ultra thick,opacity=0.25,color=red] (3.125,1.875) -- (3.5,0.5);
\draw[color=gray, dashed] (3,1) -- (3.33333,2);
\draw[fill=white] (3.125,1.875) circle (0.01);
\draw (3.125,1.875) node [right] {$\left(\left(\frac{3}{1},\infty\right), \binom{3}{1}\right)\mapsto \left(4,5,\binom{4}{2}\right)$};
\draw [-{Latex[length=5mm]},ultra thick,opacity=0.25,color=red] (2.16667,1.83333) -- (2.5,0.5);
\draw[color=gray, dashed] (2,1) -- (2.5,2);
\draw[fill=white] (2.16667,1.83333) circle (0.01);
\draw (2.16667,1.83333) node [right] {$\left(\left(\frac{2}{1},\infty\right), \binom{2}{1}\right)\mapsto \left(3,4,\binom{3}{2}\right)$};
\draw [-{Latex[length=5mm]},ultra thick,opacity=0.25,color=red] (3.29167,1.70833) -- (2.16667,1.83333);
\draw[color=gray, dashed] (3,1) -- (3.5,2);
\draw[fill=white] (3.29167,1.70833) circle (0.01);
\draw (3.29167,1.70833) node [right] {$\left(\left(\frac{2}{1},\frac{3}{1}\right), \binom{3}{1}\right)\mapsto \left(5,4,\binom{4}{2}\right)$};
\draw [-{Latex[length=5mm]},ultra thick,opacity=0.25,color=red] (2.16667,2.83333) -- (2.16667,1.83333);
\draw[color=gray, dashed] (2,2) -- (2.5,3);
\draw[fill=white] (2.16667,2.83333) circle (0.01);
\draw (2.16667,2.83333) node [right] {$\left(\left(\frac{2}{1},\infty\right), \binom{2}{2}\right)\mapsto \left(3,7,\binom{3}{3}\right)$};
\draw [-{Latex[length=5mm]},ultra thick,opacity=0.25,color=red] (1.25,1.75) -- (1.5,0.5);
\draw[color=gray, dashed] (1,1) -- (2,2);
\draw[fill=white] (1.25,1.75) circle (0.01);
\draw (1.25,1.75) node [right] {$\left(\left(\frac{1}{1},\infty\right), \binom{1}{1}\right)\mapsto \left(2,3,\binom{2}{2}\right)$};
\draw [-{Latex[length=5mm]},ultra thick,opacity=0.25,color=red] (2.41667,1.58333) -- (1.25,1.75);
\draw[color=gray, dashed] (2,1) -- (3,2);
\draw[fill=white] (2.41667,1.58333) circle (0.01);
\draw (2.41667,1.58333) node [right] {$\left(\left(\frac{1}{1},\frac{2}{1}\right), \binom{2}{1}\right)\mapsto \left(4,3,\binom{3}{2}\right)$};
\draw [-{Latex[length=5mm]},ultra thick,opacity=0.25,color=red] (3.41667,1.58333) -- (2.41667,1.58333);
\draw[color=gray, dashed] (3,1) -- (4,2);
\draw[fill=white] (3.41667,1.58333) circle (0.01);
\draw (3.41667,1.58333) node [right] {$\left(\left(\frac{1}{1},\frac{2}{1}\right), \binom{3}{1}\right)\mapsto \left(6,3,\binom{4}{2}\right)$};
\draw [-{Latex[length=5mm]},ultra thick,opacity=0.25,color=red] (2.41667,2.58333) -- (2.41667,1.58333);
\draw[color=gray, dashed] (2,2) -- (3,3);
\draw[fill=white] (2.41667,2.58333) circle (0.01);
\draw (2.41667,2.58333) node [right] {$\left(\left(\frac{1}{1},\frac{2}{1}\right), \binom{2}{2}\right)\mapsto \left(4,6,\binom{3}{3}\right)$};
\draw [-{Latex[length=5mm]},ultra thick,opacity=0.25,color=red] (1.25,2.75) -- (1.25,1.75);
\draw[color=gray, dashed] (1,2) -- (2,3);
\draw[fill=white] (1.25,2.75) circle (0.01);
\node[mark size=4pt] at (1.25,2.75) {\pgfuseplotmark{square}};
\draw (1.25,2.75) node [right] {$\left(\left(\frac{1}{1},\infty\right), \binom{1}{2}\right)\mapsto \left(2,5,\binom{2}{3}\right)$};
\draw [-{Latex[length=5mm]},ultra thick,opacity=0.25,color=red] (1.25,3.75) -- (1.25,2.75);
\draw[color=gray, dashed] (1,3) -- (2,4);
\draw[fill=white] (1.25,3.75) circle (0.01);
\draw (1.25,3.75) node [right] {$\left(\left(\frac{1}{1},\infty\right), \binom{1}{3}\right)\mapsto \left(2,7,\binom{2}{4}\right)$};
\draw [-{Latex[length=5mm]},ultra thick,opacity=0.25,color=red] (0.5,1.5) -- (0.5,0.5);
\draw[fill=white] (0.5,1.5) circle (0.01);
\draw (0.5,1.5) node [right] {$\left(\left(\frac{0}{1},\infty\right), \binom{0}{1}\right)\mapsto \left(1,2,\binom{1}{2}\right)$};
\draw [-{Latex[length=5mm]},ultra thick,opacity=0.25,color=red] (0.5,2.5) -- (0.5,1.5);
\draw[fill=white] (0.5,2.5) circle (0.01);
\draw (0.5,2.5) node [right] {$\left(\left(\frac{0}{1},\infty\right), \binom{0}{2}\right)\mapsto \left(1,3,\binom{1}{3}\right)$};
\draw [-{Latex[length=5mm]},ultra thick,opacity=0.25,color=red] (0.5,3.5) -- (0.5,2.5);
\draw[fill=white] (0.5,3.5) circle (0.01);
\draw (0.5,3.5) node [right] {$\left(\left(\frac{0}{1},\infty\right), \binom{0}{3}\right)\mapsto \left(1,4,\binom{1}{4}\right)$};
\draw [-{Latex[length=5mm]},ultra thick,opacity=0.25,color=red] (0.5,4.5) -- (0.5,3.5);
\draw[fill=white] (0.5,4.5) circle (0.01);
\draw (0.5,4.5) node [right] {$\left(\left(\frac{0}{1},\infty\right), \binom{0}{4}\right)\mapsto \left(1,5,\binom{1}{5}\right)$};
\draw [-{Latex[length=5mm]},ultra thick,opacity=0.25,color=red] (1.875,3.125) -- (0.5,3.5);
\draw[fill=white] (1.875,3.125) circle (0.01);
\draw (1.875,3.125) node [right] {$\left(\left(\frac{0}{1},\frac{1}{3}\right), \binom{1}{3}\right)\mapsto \left(5,4,\binom{2}{4}\right)$};
\draw [-{Latex[length=5mm]},ultra thick,opacity=0.25,color=red] (1.83333,2.16667) -- (0.5,2.5);
\draw[fill=white] (1.83333,2.16667) circle (0.01);
\node[mark size=5pt] at (1.83333,2.16667) {\pgfuseplotmark{triangle}};
\draw (1.83333,2.16667) node [right] {$\left(\left(\frac{0}{1},\frac{1}{2}\right), \binom{1}{2}\right)\mapsto \left(4,3,\binom{2}{3}\right)$};
\draw [-{Latex[length=5mm]},ultra thick,opacity=0.25,color=red] (2.83333,2.16667) -- (1.83333,2.16667);
\draw[fill=white] (2.83333,2.16667) circle (0.01);
\draw (2.83333,2.16667) node [right] {$\left(\left(\frac{0}{1},\frac{1}{2}\right), \binom{2}{2}\right)\mapsto \left(7,3,\binom{3}{3}\right)$};
\draw [-{Latex[length=5mm]},ultra thick,opacity=0.25,color=red] (1.70833,3.29167) -- (1.83333,2.16667);
\draw[color=gray, dashed] (1,3) -- (2,3.33333);
\draw[fill=white] (1.70833,3.29167) circle (0.01);
\draw (1.70833,3.29167) node [right] {$\left(\left(\frac{1}{3},\frac{1}{2}\right), \binom{1}{3}\right)\mapsto \left(4,5,\binom{2}{4}\right)$};
\draw [-{Latex[length=5mm]},ultra thick,opacity=0.25,color=red] (1.75,1.25) -- (0.5,1.5);
\draw[fill=white] (1.75,1.25) circle (0.01);
\draw (1.75,1.25) node [right] {$\left(\left(\frac{0}{1},\frac{1}{1}\right), \binom{1}{1}\right)\mapsto \left(3,2,\binom{2}{2}\right)$};
\draw [-{Latex[length=5mm]},ultra thick,opacity=0.25,color=red] (2.75,1.25) -- (1.75,1.25);
\draw[fill=white] (2.75,1.25) circle (0.01);
\draw (2.75,1.25) node [right] {$\left(\left(\frac{0}{1},\frac{1}{1}\right), \binom{2}{1}\right)\mapsto \left(5,2,\binom{3}{2}\right)$};
\draw [-{Latex[length=5mm]},ultra thick,opacity=0.25,color=red] (3.75,1.25) -- (2.75,1.25);
\draw[fill=white] (3.75,1.25) circle (0.01);
\draw (3.75,1.25) node [right] {$\left(\left(\frac{0}{1},\frac{1}{1}\right), \binom{3}{1}\right)\mapsto \left(7,2,\binom{4}{2}\right)$};
\draw [-{Latex[length=5mm]},ultra thick,opacity=0.25,color=red] (1.58333,2.41667) -- (1.75,1.25);
\draw[color=gray, dashed] (1,2) -- (2,2.5);
\draw[fill=white] (1.58333,2.41667) circle (0.01);
\draw (1.58333,2.41667) node [right] {$\left(\left(\frac{1}{2},\frac{1}{1}\right), \binom{1}{2}\right)\mapsto \left(3,4,\binom{2}{3}\right)$};
\draw [-{Latex[length=5mm]},ultra thick,opacity=0.25,color=red] (2.58333,2.41667) -- (1.58333,2.41667);
\draw[color=gray, dashed] (2,2) -- (3,2.5);
\draw[fill=white] (2.58333,2.41667) circle (0.01);
\draw (2.58333,2.41667) node [right] {$\left(\left(\frac{1}{2},\frac{1}{1}\right), \binom{2}{2}\right)\mapsto \left(6,4,\binom{3}{3}\right)$};
\draw [-{Latex[length=5mm]},ultra thick,opacity=0.25,color=red] (1.58333,3.41667) -- (1.58333,2.41667);
\draw[color=gray, dashed] (1,3) -- (2,3.5);
\draw[fill=white] (1.58333,3.41667) circle (0.01);
\draw (1.58333,3.41667) node [right] {$\left(\left(\frac{1}{2},\frac{1}{1}\right), \binom{1}{3}\right)\mapsto \left(3,6,\binom{2}{4}\right)$};

   \end{tikzpicture}
}}   
\caption{Trees $\mathbb{T}_4^{\Interdon{},\trimapdon{}} \simeq
  \mathbb{T}_4^{\TYngdon{},\phidon{}} = \mathbb{T}_4^{\Lsetitimonozerodon{},\upmapzerodon{}}$} \label{fig:3trees}
\end{figure}
\end{center}
\clearpage

\section{Relation to the study of the inverses of
  S\'os permutations}
For a positive integer $m$ and a real number $\alpha$,
  the permutation that sorts the fractional parts
  of $m$ real numbers $\alpha, 2\alpha, \ldots, m\alpha$ in increasing order is referred to as a {\it {\mysos} permutation\/} of degree $m$. 
This class of permutations
  was introduced in \cite{sos} to prove
  so-called three gaps theorem.
  For a detailed description of {\mysos} permutations,
  we refer to \cite{BKPT}.
  {\mysur} showed that there is a
  bijection to the set of {\mysos} permutations
  from the set of the open intervals formed by consecutive two 
  terms of the $\Th{m}$ Farey sequence, which maps
  the pair formed by the denominators of two successive
  terms in the \Th{m} Farey sequence to the pair
  of the first and last terms
  of a {\mysos} permutation of degree $m$ \cite[Satz I]{sur}
  (see also \cite{Shu}), which in turn
  determine the entire permutation by the recurrence
  given in \cite[Theorem I]{sos}.

  The {\it inverses\/} of the {\mysos} permutations have
  been also studied \cite{obr, coo, tikan1,tikan2,ranking2,arXiv2024}.
  Among them, \cite{tikan1} looked at a recurrence
  (which is different from one given in \cite[Theorem I]{sos})
  satisfied by the terms of a permutation
  and
  showed that any of the inverse of a {\mysos} permutation satisfies
  the recurrence. Recently, the converse of this fact, %
  that a permutation satisfying the recurrence is the inverse
  of a {\mysos} permutation, was shown \cite{arXiv2024}.
  As an application, \cite{arXiv2024} presented a
  combinatorial
  procedure that lifts each of the permutations of degree
  $m-1$ satisfying the recurrence for $m-1$
  to the degree $m$ permutations that satisfy
  the recurrence for $m$. Starting with $\{\mathrm{Id} : [1] \rightarrow [1]\}$
  of degree $1$, repeating the procedure 
  forms a rooted binary tree, where the set of the vertices
  of level $m-1$ are the permutations of
  degree $m$
  satisfying the
  recurrence for $m$. Then it was shown that the tree is
  isomorphic to a tree in which the vertices of level
  $m-1$ are the open intervals formed by successive
  two terms of $\Th{m}$ Farey sequence (or equivalently,
  the inverse of the permutation corresponding
  to the interval via {\mysur}'s bijection)
  and level $m-1$ and $m$ vertices are adjacent
  if and only if the latter vertex, an open interval,
  is an open sub-interval
  of the former vertex. The isomorphic trees provide
  a purely combinatorial way to enumerate
  the inverses of the {\mysos} permutations whose
  definition depends on a real number $\alpha$.

  As another branch of study, a two-dimensional counterpart
  of the inverses of
  the {\mysos} permutations/{\mysur}'s bijection
  is seeked in a series of studies
  \cite{ranking,ranking2,ranking3}. In \cite{ranking},
  the bijections $\theta: [m] \times [n] \rightarrow [mn]$
  were considered as a 2d-extension of permutations.
  A special class of such $\theta$,
  those which have the presentation
  \begin{equation}
    \theta\myv{i}{j} =
    \sum_{s \in [m]}\sum_{t \in [n]}
    \myChi{ \{\alpha s + \beta t\} \leq \{\alpha i + \beta j\}  }
  \quad (\myv{i}{j} \in [m] \times [n])
  \end{equation}
  where $\{\cdot\}$ denotes the fractional part of a real number
  and $\myv{\alpha}{\beta} \in (0,1)^2$, was referred to as the
  \textit{ranking tables} and was considered as
  an extension of the inverses of the {\mysos} permutations.
  The 2d-counterpart of the Farey intervals in this setting
  are the \textit{Farey regions} which are the resulting
  open connected components when the square $[0,1]^2$ is
  divided by \textit{Farey lines} $DF(m,n)$, those lines
  whose slopes and intercepts are rational numbers
  whose numerators and denominators are bounded in terms
  of $m$ and $n$. Farey lines/regions and their relation
  to digital geometry
  were studied in
  \cite{FldFd,Acco,HAL,KFur,KErr}.
  In \cite{ranking}, it was shown
  that for each of a pair $\myv{m}{n}$ of positive integers,
  there is a surjection from the Farey regions
  (resulting from the division of $[0,1]^2$ by the lines
   $L(m,n)$ which is a slightly larger set than $DF(m-1,n-1)$)
  to the ranking tables of 
  \order $\myv{m}{n}$. This surjection may be seen
  as a 2d-counterpart of {\mysur}'s bijection. %
  Recently, the surjection turned out to be bijective
    for both of the two definitions of the Farey regions 
    (the division of $[0,1]^2$ by $L(m,n)$ and by $DF(m-1,n-1)$)
    \cite{ranking4}.
    
  In \cite{ranking},
  restricting the map to the \textit{fan regions},
  which is the collection of the Farey regions
  that touch the origin $\myv{0}{0}$,
  it was shown that the restriction is a bijection
  from the fan regions to \textit{Young ranking tables},
  which are those ranking tables in which
  entries are sorted in increasing order
  both row- and column-wise.

  In \cite{ranking2}, a number of combinatorial types of bijections
  $\theta: [m] \times [n] \rightarrow [mn]$ were defined.
  Though these types are defined so that the ranking tables
    fall into them, they are purely combinatorial
    in that their definitions do not refer to the real parameters
    $\alpha, \beta$ of the ranking tables.
  Among them, \textit{injective tables of difference equation type} 
  and \textit{injective tables of point-symmetric sum congruent type} are
  such types that if a $\theta$ satisfies both,
  then the L-shape restriction $\theta|_{([m]\times\{1\})\cup(\{1\}\times[n])}$
  determines the entire $\theta:[m]\times[n]\to[mn].$
  When $n = 1$, the former type reduces to the set of permutations
  satisfying a congruential recurrence
  \cite[Theorem 37]{ranking2}.
  Later, the set was
  proved to be the set of the inverses of {\mysos} permutations
  \cite{arXiv2024}.
  In \cite{ranking2}, it was shown
  that a ranking table is of both types. Thus, a ranking table
  is determined by its L-shape restriction. This result is quoted
  as Fact \ref{fact:YandLY} in this paper.
  Also in \cite{ranking2}, a numerical experiment
  was conducted for a limited number of combinations of $\myv{m}{n}$
  to show that an injective table of 
  difference equation type is always a ranking table
  for such $\myv{m}{n}$.

  As it turned out that the L-shape restriction is essential for
  the ranking tables, L-shapes and  their trees were studied 
  in \cite{ranking3}. A generalized set 
$\Garey{m}{n}$ of Farey fractions (Definition \ref{defi:garey})
  was considered and the fan regions
were identified with $\Gint{m}{n}$.
And it was shown that
the L-shape restriction
$\theta|_{([m] \times \{1\})  \cup (\{1\}\times[n])} $
  of a Young ranking table $\theta$
  is always an \textit{injective L-shape of difference equation type},
  as provided in Fact \ref{fact:Lincl}.
  Then, hoping the isomorphic result as obtained in \cite{arXiv2024},
  the two trees
  $\mathbb{T}_N^1 = \mathbb{T}_N^{\Interdon{},\trimapdon{}}$
  and
  $\mathbb{T}_N^2 = \mathbb{T}_N^{\Lsetitimonozerodon{},\upmapzerodon{}}$
  were compared upto $N=1000$.
  They are in fact isomorphic for arbitrary $N$,
  as shown in Theorem \ref{theo:main}
  of the current paper.
  An implication on L-shape
  is presented as Corollary \ref{coro:Lset}.
  A further implication on the Young ranking tables is given  below.

\section{An implication on the Young ranking tables}\label{sect:implranking}
In this section, definitions and known facts on
\textit{Farey regions}, \textit{injective tables} and their
types, and \textit{ranking tables} are quoted. Then an implication
of Theorem \ref{theo:main} on the Young ranking tables
is shown.

\subsection{Farey regions and generalized Farey sequences}
\begin{defi}[\cite{FldFd}]
Let $m$ and $n$ be nonnegative integers.
The set of points
\begin{equation}
  DF\myh{m}{n} := \left\{ \myv{\alpha}{\beta} \in [0,1]^2 :
  \exists \myv{i}{j} \in (\mathbb{Z} \cap [-m,m]) \times
  (\mathbb{Z} \cap [-n,n]) \setminus \{\myv{0}{0}\}
  \ \ \  i \alpha + j \beta \in \mathbb{Z} \right\} \label{eq:DFdef}
\end{equation}
is referred to as the Farey lines of order $\myv{m}{n}$.
\end{defi}
Note that for given $\myv{m}{n}$, $DF\myh{m}{n}$ (which is referred to as
$L_F(m+1,n+1)$ in \cite{ranking}) 
partition $[0,1]^2$ into a number of open connected components.
\begin{defi}[\cite{HAL}]
  The set $FF\myh{m}{n}$
  of all open connected components resulting from
  the partition of $[0,1]^2$ by $DF\myh{m}{n}$ is referred to as
  the Farey regions of order $\myv{m}{n}$.
\end{defi}
\newsavebox{\BLFtwothree}
\savebox{\BLFtwothree}{\scalebox{0.4}{\begin{tikzpicture}[scale=10]
\draw (0,0) -- (0.25,0.125) -- (0.333333,0) -- (0,0);
\draw (0,0) -- (0.111111,0.333333) -- (0,0.5) -- (0,0);
\draw (0,0) -- (0.142857,0.285714) -- (0.111111,0.333333) -- (0,0);
\draw (0,0) -- (0.142857,0.285714) -- (0.166667,0.25) -- (0,0);
\draw (0,0) -- (0.2,0.2) -- (0.166667,0.25) -- (0,0);
\draw (0,0) -- (0.25,0.125) -- (0.2,0.2) -- (0,0);
\draw (1,0) -- (0.75,0.125) -- (0.666667,0) -- (1,0);
\draw (0.888889,0.333333) -- (1,0.5) -- (1,0) -- (0.888889,0.333333);
\draw (1,0) -- (0.8,0.2) -- (0.833333,0.25) -- (1,0);
\draw (1,0) -- (0.857143,0.285714) -- (0.833333,0.25) -- (1,0);
\draw (1,0) -- (0.857143,0.285714) -- (0.888889,0.333333) -- (1,0);
\draw (1,0) -- (0.75,0.125) -- (0.8,0.2) -- (1,0);
\draw (0.25,0.875) -- (0,1) -- (0.333333,1) -- (0.25,0.875);
\draw (0,0.5) -- (0.111111,0.666667) -- (0,1) -- (0,0.5);
\draw (0.2,0.8) -- (0,1) -- (0.166667,0.75) -- (0.2,0.8);
\draw (0.142857,0.714286) -- (0,1) -- (0.166667,0.75) -- (0.142857,0.714286);
\draw (0.142857,0.714286) -- (0,1) -- (0.111111,0.666667) -- (0.142857,0.714286);
\draw (0.25,0.875) -- (0,1) -- (0.2,0.8) -- (0.25,0.875);
\draw (0.75,0.875) -- (1,1) -- (0.666667,1) -- (0.75,0.875);
\draw (0.888889,0.666667) -- (1,1) -- (1,0.5) -- (0.888889,0.666667);
\draw (0.857143,0.714286) -- (1,1) -- (0.888889,0.666667) -- (0.857143,0.714286);
\draw (0.857143,0.714286) -- (1,1) -- (0.833333,0.75) -- (0.857143,0.714286);
\draw (0.8,0.8) -- (1,1) -- (0.833333,0.75) -- (0.8,0.8);
\draw (0.75,0.875) -- (1,1) -- (0.8,0.8) -- (0.75,0.875);
\draw (0.5,0) -- (0.4,0.1) -- (0.333333,0) -- (0.5,0);
\draw (0.25,0.125) -- (0.285714,0.142857) -- (0.333333,0) -- (0.25,0.125);
\draw (0.285714,0.142857) -- (0.333333,0.166667) -- (0.333333,0) -- (0.285714,0.142857);
\draw (0.375,0.125) -- (0.333333,0.166667) -- (0.333333,0) -- (0.375,0.125);
\draw (0.4,0.1) -- (0.375,0.125) -- (0.333333,0) -- (0.4,0.1);
\draw (0.5,0) -- (0.6,0.1) -- (0.666667,0) -- (0.5,0);
\draw (0.6,0.1) -- (0.625,0.125) -- (0.666667,0) -- (0.6,0.1);
\draw (0.625,0.125) -- (0.666667,0.166667) -- (0.666667,0) -- (0.625,0.125);
\draw (0.714286,0.142857) -- (0.666667,0.166667) -- (0.666667,0) -- (0.714286,0.142857);
\draw (0.75,0.125) -- (0.714286,0.142857) -- (0.666667,0) -- (0.75,0.125);
\draw (0.5,0) -- (0.571429,0.142857) -- (0.5,0.25) -- (0.5,0);
\draw (0.5,0) -- (0.5,0.25) -- (0.428571,0.142857) -- (0.5,0);
\draw (0.5,0) -- (0.6,0.1) -- (0.571429,0.142857) -- (0.5,0);
\draw (0.5,0) -- (0.428571,0.142857) -- (0.4,0.1) -- (0.5,0);
\draw (0.4,0.9) -- (0.5,1) -- (0.333333,1) -- (0.4,0.9);
\draw (0.375,0.875) -- (0.4,0.9) -- (0.333333,1) -- (0.375,0.875);
\draw (0.333333,0.833333) -- (0.375,0.875) -- (0.333333,1) -- (0.333333,0.833333);
\draw (0.333333,0.833333) -- (0.285714,0.857143) -- (0.333333,1) -- (0.333333,0.833333);
\draw (0.285714,0.857143) -- (0.25,0.875) -- (0.333333,1) -- (0.285714,0.857143);
\draw (0.6,0.9) -- (0.5,1) -- (0.666667,1) -- (0.6,0.9);
\draw (0.714286,0.857143) -- (0.75,0.875) -- (0.666667,1) -- (0.714286,0.857143);
\draw (0.666667,0.833333) -- (0.714286,0.857143) -- (0.666667,1) -- (0.666667,0.833333);
\draw (0.666667,0.833333) -- (0.625,0.875) -- (0.666667,1) -- (0.666667,0.833333);
\draw (0.625,0.875) -- (0.6,0.9) -- (0.666667,1) -- (0.625,0.875);
\draw (0.428571,0.857143) -- (0.5,1) -- (0.5,0.75) -- (0.428571,0.857143);
\draw (0.5,0.75) -- (0.5,1) -- (0.571429,0.857143) -- (0.5,0.75);
\draw (0.4,0.9) -- (0.5,1) -- (0.428571,0.857143) -- (0.4,0.9);
\draw (0.571429,0.857143) -- (0.5,1) -- (0.6,0.9) -- (0.571429,0.857143);
\draw (0.125,0.375) -- (0,0.5) -- (0.111111,0.333333) -- (0.125,0.375);
\draw (0,0.5) -- (0.166667,0.5) -- (0.142857,0.571429) -- (0,0.5);
\draw (0,0.5) -- (0.166667,0.5) -- (0.142857,0.428571) -- (0,0.5);
\draw (0,0.5) -- (0.125,0.625) -- (0.111111,0.666667) -- (0,0.5);
\draw (0,0.5) -- (0.142857,0.571429) -- (0.125,0.625) -- (0,0.5);
\draw (0.142857,0.428571) -- (0,0.5) -- (0.125,0.375) -- (0.142857,0.428571);
\draw (1,0.5) -- (0.875,0.625) -- (0.888889,0.666667) -- (1,0.5);
\draw (0.833333,0.5) -- (1,0.5) -- (0.857143,0.428571) -- (0.833333,0.5);
\draw (0.833333,0.5) -- (1,0.5) -- (0.857143,0.571429) -- (0.833333,0.5);
\draw (0.875,0.375) -- (1,0.5) -- (0.888889,0.333333) -- (0.875,0.375);
\draw (0.857143,0.428571) -- (1,0.5) -- (0.875,0.375) -- (0.857143,0.428571);
\draw (1,0.5) -- (0.857143,0.571429) -- (0.875,0.625) -- (1,0.5);
\draw (0.285714,0.571429) -- (0.333333,0.666667) -- (0.333333,0.5) -- (0.285714,0.571429);
\draw (0.25,0.5) -- (0.333333,0.5) -- (0.285714,0.571429) -- (0.25,0.5);
\draw (0.333333,0.5) -- (0.5,0.5) -- (0.4,0.4) -- (0.333333,0.5);
\draw (0.333333,0.333333) -- (0.4,0.4) -- (0.333333,0.5) -- (0.333333,0.333333);
\draw (0.333333,0.5) -- (0.5,0.5) -- (0.4,0.6) -- (0.333333,0.5);
\draw (0.4,0.6) -- (0.333333,0.666667) -- (0.333333,0.5) -- (0.4,0.6);
\draw (0.333333,0.333333) -- (0.285714,0.428571) -- (0.333333,0.5) -- (0.333333,0.333333);
\draw (0.25,0.5) -- (0.333333,0.5) -- (0.285714,0.428571) -- (0.25,0.5);
\draw (0.2,0.7) -- (0.25,0.75) -- (0.222222,0.666667) -- (0.2,0.7);
\draw (0.2,0.6) -- (0.25,0.625) -- (0.222222,0.666667) -- (0.2,0.6);
\draw (0.5,0.25) -- (0.5,0.5) -- (0.444444,0.333333) -- (0.5,0.25);
\draw (0.5,0.25) -- (0.428571,0.285714) -- (0.444444,0.333333) -- (0.5,0.25);
\draw (0.4,0.4) -- (0.5,0.5) -- (0.444444,0.333333) -- (0.4,0.4);
\draw (0.6,0.2) -- (0.5,0.25) -- (0.571429,0.142857) -- (0.6,0.2);
\draw (0.5,0.25) -- (0.5,0.5) -- (0.555556,0.333333) -- (0.5,0.25);
\draw (0.5,0.25) -- (0.571429,0.285714) -- (0.555556,0.333333) -- (0.5,0.25);
\draw (0.5,0.25) -- (0.571429,0.285714) -- (0.6,0.2) -- (0.5,0.25);
\draw (0.4,0.2) -- (0.5,0.25) -- (0.428571,0.285714) -- (0.4,0.2);
\draw (0.4,0.2) -- (0.5,0.25) -- (0.428571,0.142857) -- (0.4,0.2);
\draw (0.25,0.625) -- (0.333333,0.666667) -- (0.285714,0.571429) -- (0.25,0.625);
\draw (0.666667,0.333333) -- (0.714286,0.428571) -- (0.666667,0.5) -- (0.666667,0.333333);
\draw (0.666667,0.5) -- (0.75,0.5) -- (0.714286,0.428571) -- (0.666667,0.5);
\draw (0.5,0.5) -- (0.666667,0.5) -- (0.6,0.6) -- (0.5,0.5);
\draw (0.6,0.6) -- (0.666667,0.666667) -- (0.666667,0.5) -- (0.6,0.6);
\draw (0.5,0.5) -- (0.666667,0.5) -- (0.6,0.4) -- (0.5,0.5);
\draw (0.666667,0.333333) -- (0.6,0.4) -- (0.666667,0.5) -- (0.666667,0.333333);
\draw (0.714286,0.571429) -- (0.666667,0.666667) -- (0.666667,0.5) -- (0.714286,0.571429);
\draw (0.666667,0.5) -- (0.75,0.5) -- (0.714286,0.571429) -- (0.666667,0.5);
\draw (0.5,0.5) -- (0.5,0.75) -- (0.555556,0.666667) -- (0.5,0.5);
\draw (0.571429,0.714286) -- (0.5,0.75) -- (0.555556,0.666667) -- (0.571429,0.714286);
\draw (0.5,0.5) -- (0.6,0.6) -- (0.555556,0.666667) -- (0.5,0.5);
\draw (0.75,0.25) -- (0.8,0.3) -- (0.777778,0.333333) -- (0.75,0.25);
\draw (0.75,0.375) -- (0.8,0.4) -- (0.777778,0.333333) -- (0.75,0.375);
\draw (0.5,0.75) -- (0.4,0.8) -- (0.428571,0.857143) -- (0.5,0.75);
\draw (0.5,0.5) -- (0.5,0.75) -- (0.444444,0.666667) -- (0.5,0.5);
\draw (0.428571,0.714286) -- (0.5,0.75) -- (0.444444,0.666667) -- (0.428571,0.714286);
\draw (0.428571,0.714286) -- (0.5,0.75) -- (0.4,0.8) -- (0.428571,0.714286);
\draw (0.5,0.75) -- (0.6,0.8) -- (0.571429,0.714286) -- (0.5,0.75);
\draw (0.5,0.75) -- (0.6,0.8) -- (0.571429,0.857143) -- (0.5,0.75);
\draw (0.666667,0.333333) -- (0.75,0.375) -- (0.714286,0.428571) -- (0.666667,0.333333);
\draw (0.2,0.4) -- (0.142857,0.428571) -- (0.166667,0.5) -- (0.2,0.4);
\draw (0.166667,0.5) -- (0.25,0.5) -- (0.2,0.4) -- (0.166667,0.5);
\draw (0.142857,0.571429) -- (0.2,0.6) -- (0.166667,0.5) -- (0.142857,0.571429);
\draw (0.166667,0.5) -- (0.25,0.5) -- (0.2,0.6) -- (0.166667,0.5);
\draw (0.25,0.25) -- (0.2,0.3) -- (0.222222,0.333333) -- (0.25,0.25);
\draw (0.25,0.375) -- (0.2,0.4) -- (0.222222,0.333333) -- (0.25,0.375);
\draw (0.285714,0.142857) -- (0.333333,0.166667) -- (0.25,0.25) -- (0.285714,0.142857);
\draw (0.25,0.25) -- (0.333333,0.333333) -- (0.333333,0.166667) -- (0.25,0.25);
\draw (0.5,0.5) -- (0.4,0.6) -- (0.444444,0.666667) -- (0.5,0.5);
\draw (0.6,0.4) -- (0.5,0.5) -- (0.555556,0.333333) -- (0.6,0.4);
\draw (0.4,0.8) -- (0.333333,0.833333) -- (0.375,0.875) -- (0.4,0.8);
\draw (0.333333,0.666667) -- (0.428571,0.714286) -- (0.4,0.8) -- (0.333333,0.666667);
\draw (0.4,0.8) -- (0.333333,0.833333) -- (0.333333,0.666667) -- (0.4,0.8);
\draw (0.666667,0.166667) -- (0.6,0.2) -- (0.625,0.125) -- (0.666667,0.166667);
\draw (0.571429,0.285714) -- (0.666667,0.333333) -- (0.6,0.2) -- (0.571429,0.285714);
\draw (0.666667,0.166667) -- (0.6,0.2) -- (0.666667,0.333333) -- (0.666667,0.166667);
\draw (0.857143,0.571429) -- (0.8,0.6) -- (0.833333,0.5) -- (0.857143,0.571429);
\draw (0.75,0.5) -- (0.833333,0.5) -- (0.8,0.6) -- (0.75,0.5);
\draw (0.8,0.4) -- (0.857143,0.428571) -- (0.833333,0.5) -- (0.8,0.4);
\draw (0.75,0.5) -- (0.833333,0.5) -- (0.8,0.4) -- (0.75,0.5);
\draw (0.8,0.7) -- (0.75,0.75) -- (0.777778,0.666667) -- (0.8,0.7);
\draw (0.8,0.6) -- (0.75,0.625) -- (0.777778,0.666667) -- (0.8,0.6);
\draw (0.666667,0.833333) -- (0.714286,0.857143) -- (0.75,0.75) -- (0.666667,0.833333);
\draw (0.666667,0.666667) -- (0.75,0.75) -- (0.666667,0.833333) -- (0.666667,0.666667);
\draw (0.333333,0.166667) -- (0.4,0.2) -- (0.333333,0.333333) -- (0.333333,0.166667);
\draw (0.333333,0.166667) -- (0.4,0.2) -- (0.375,0.125) -- (0.333333,0.166667);
\draw (0.25,0.75) -- (0.333333,0.833333) -- (0.333333,0.666667) -- (0.25,0.75);
\draw (0.428571,0.285714) -- (0.333333,0.333333) -- (0.4,0.2) -- (0.428571,0.285714);
\draw (0.333333,0.333333) -- (0.25,0.375) -- (0.285714,0.428571) -- (0.333333,0.333333);
\draw (0.333333,0.833333) -- (0.285714,0.857143) -- (0.25,0.75) -- (0.333333,0.833333);
\draw (0.666667,0.166667) -- (0.75,0.25) -- (0.666667,0.333333) -- (0.666667,0.166667);
\draw (0.6,0.8) -- (0.666667,0.833333) -- (0.666667,0.666667) -- (0.6,0.8);
\draw (0.6,0.8) -- (0.666667,0.833333) -- (0.625,0.875) -- (0.6,0.8);
\draw (0.666667,0.666667) -- (0.571429,0.714286) -- (0.6,0.8) -- (0.666667,0.666667);
\draw (0.75,0.625) -- (0.666667,0.666667) -- (0.714286,0.571429) -- (0.75,0.625);
\draw (0.714286,0.142857) -- (0.666667,0.166667) -- (0.75,0.25) -- (0.714286,0.142857);
\draw (0.111111,0.333333) -- (0.142857,0.285714) -- (0.166667,0.333333) -- (0.125,0.375) -- (0.111111,0.333333);
\draw (0.166667,0.25) -- (0.142857,0.285714) -- (0.166667,0.333333) -- (0.2,0.3) -- (0.166667,0.25);
\draw (0.166667,0.25) -- (0.2,0.2) -- (0.25,0.25) -- (0.2,0.3) -- (0.166667,0.25);
\draw (0.2,0.2) -- (0.25,0.125) -- (0.285714,0.142857) -- (0.25,0.25) -- (0.2,0.2);
\draw (0.2,0.6) -- (0.166667,0.666667) -- (0.2,0.7) -- (0.222222,0.666667) -- (0.2,0.6);
\draw (0.222222,0.666667) -- (0.25,0.625) -- (0.333333,0.666667) -- (0.25,0.75) -- (0.222222,0.666667);
\draw (0.428571,0.285714) -- (0.333333,0.333333) -- (0.4,0.4) -- (0.444444,0.333333) -- (0.428571,0.285714);
\draw (0.166667,0.75) -- (0.2,0.7) -- (0.25,0.75) -- (0.2,0.8) -- (0.166667,0.75);
\draw (0.142857,0.714286) -- (0.166667,0.666667) -- (0.2,0.7) -- (0.166667,0.75) -- (0.142857,0.714286);
\draw (0.25,0.5) -- (0.2,0.6) -- (0.25,0.625) -- (0.285714,0.571429) -- (0.25,0.5);
\draw (0.571429,0.142857) -- (0.6,0.1) -- (0.625,0.125) -- (0.6,0.2) -- (0.571429,0.142857);
\draw (0.555556,0.666667) -- (0.6,0.6) -- (0.666667,0.666667) -- (0.571429,0.714286) -- (0.555556,0.666667);
\draw (0.777778,0.333333) -- (0.8,0.3) -- (0.833333,0.333333) -- (0.8,0.4) -- (0.777778,0.333333);
\draw (0.75,0.25) -- (0.666667,0.333333) -- (0.75,0.375) -- (0.777778,0.333333) -- (0.75,0.25);
\draw (0.8,0.2) -- (0.75,0.25) -- (0.8,0.3) -- (0.833333,0.25) -- (0.8,0.2);
\draw (0.833333,0.25) -- (0.8,0.3) -- (0.833333,0.333333) -- (0.857143,0.285714) -- (0.833333,0.25);
\draw (0.4,0.8) -- (0.375,0.875) -- (0.4,0.9) -- (0.428571,0.857143) -- (0.4,0.8);
\draw (0.714286,0.428571) -- (0.75,0.375) -- (0.8,0.4) -- (0.75,0.5) -- (0.714286,0.428571);
\draw (0.875,0.625) -- (0.833333,0.666667) -- (0.857143,0.714286) -- (0.888889,0.666667) -- (0.875,0.625);
\draw (0.8,0.7) -- (0.833333,0.666667) -- (0.857143,0.714286) -- (0.833333,0.75) -- (0.8,0.7);
\draw (0.8,0.7) -- (0.75,0.75) -- (0.8,0.8) -- (0.833333,0.75) -- (0.8,0.7);
\draw (0.75,0.75) -- (0.714286,0.857143) -- (0.75,0.875) -- (0.8,0.8) -- (0.75,0.75);
\draw (0.111111,0.666667) -- (0.125,0.625) -- (0.166667,0.666667) -- (0.142857,0.714286) -- (0.111111,0.666667);
\draw (0.222222,0.333333) -- (0.25,0.25) -- (0.333333,0.333333) -- (0.25,0.375) -- (0.222222,0.333333);
\draw (0.2,0.3) -- (0.166667,0.333333) -- (0.2,0.4) -- (0.222222,0.333333) -- (0.2,0.3);
\draw (0.166667,0.333333) -- (0.2,0.4) -- (0.142857,0.428571) -- (0.125,0.375) -- (0.166667,0.333333);
\draw (0.285714,0.428571) -- (0.25,0.375) -- (0.2,0.4) -- (0.25,0.5) -- (0.285714,0.428571);
\draw (0.125,0.625) -- (0.142857,0.571429) -- (0.2,0.6) -- (0.166667,0.666667) -- (0.125,0.625);
\draw (0.4,0.6) -- (0.333333,0.666667) -- (0.428571,0.714286) -- (0.444444,0.666667) -- (0.4,0.6);
\draw (0.555556,0.333333) -- (0.571429,0.285714) -- (0.666667,0.333333) -- (0.6,0.4) -- (0.555556,0.333333);
\draw (0.75,0.625) -- (0.666667,0.666667) -- (0.75,0.75) -- (0.777778,0.666667) -- (0.75,0.625);
\draw (0.777778,0.666667) -- (0.8,0.6) -- (0.833333,0.666667) -- (0.8,0.7) -- (0.777778,0.666667);
\draw (0.857143,0.285714) -- (0.833333,0.333333) -- (0.875,0.375) -- (0.888889,0.333333) -- (0.857143,0.285714);
\draw (0.875,0.625) -- (0.857143,0.571429) -- (0.8,0.6) -- (0.833333,0.666667) -- (0.875,0.625);
\draw (0.75,0.5) -- (0.8,0.6) -- (0.75,0.625) -- (0.714286,0.571429) -- (0.75,0.5);
\draw (0.833333,0.333333) -- (0.8,0.4) -- (0.857143,0.428571) -- (0.875,0.375) -- (0.833333,0.333333);
\draw (0.25,0.75) -- (0.285714,0.857143) -- (0.25,0.875) -- (0.2,0.8) -- (0.25,0.75);
\draw (0.4,0.1) -- (0.428571,0.142857) -- (0.4,0.2) -- (0.375,0.125) -- (0.4,0.1);
\draw (0.625,0.875) -- (0.6,0.8) -- (0.571429,0.857143) -- (0.6,0.9) -- (0.625,0.875);
\draw (0.8,0.2) -- (0.75,0.125) -- (0.714286,0.142857) -- (0.75,0.25) -- (0.8,0.2);
\end{tikzpicture}}}
\begin{figure}
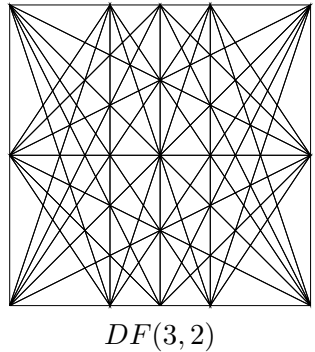

\centering
\begin{tabular}{c}
  \rotatebox{0}{\usebox{\BLFtwothree}}  \\
  $DF\myh{3}{2}$ 
\end{tabular}
\caption{An example of Farey lines $DF\myh{m}{n}$ and resulting
 Farey regions $FF\myh{m}{n}$}\label{fig:DF}
\end{figure}
For example, $DF\myh{3}{2}$ contains the intersection of $[0,1]^2$
and the lines of the form $i \alpha + j \beta = k$ where
$|i| = 0, 1, 2, 3$, $|j|=0, 1, 2$ with $|i|+|j|>0$ and $k \in \mathbb{Z}$
as  shown in Fig.~\ref{fig:DF}. These lines partition $[0,1]^2$
to yield the Farey regions of order $\myv{3}{2}$ consisting of
the 180 open connected components which are 40 open quadrilaterals
plus 140 open triangles. 
\begin{defi}\label{defi:fanregion}
  Among the set of the Farey regions of order $\myv{m}{n}$,
  the subset consisting of the regions that the boundary
  contains the origin $\myv{0}{0}$ are referred to as
  the set of the \textit{fan regions} of order $\myv{m}{n}$.
\end{defi}
For example, as shown in Fig.~\ref{fig:DF}, the fan regions
of order $\myv{3}{2}$ consist of 6 triangles around the origin
which are the shaded triangles
below the line $m \alpha + n \beta = 1$
in Fig.~\ref{fig:DFzero}.
They correspond to the partition of $[0,1]^2$ by such lines
in 
\begin{equation*}
 DF\myh{3}{2} = \{ \myv{\alpha}{\beta} : i \alpha - j \beta = 0, i \in \{0,1,2,3\}
  , j \in \{0,1,2\}, \myv{i}{j}\neq\myv{0}{0} \}
\end{equation*}
 that intersect with the origin
as shown in Fig.~\ref{fig:DFzero}.
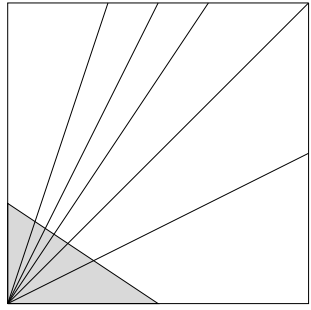
\begin{figure}
\centering
\scalebox{0.4}{\begin{tikzpicture}[scale=10]
\draw [fill=gray!30] (0,0) -- (0.5,0) -- (0,0.333333) -- (0,0);
\draw (0,0) -- (1.0,0.0) -- (1.0,1.0);
\draw (0,0) -- (1.0,0.5);
\draw (0,0) -- (0.333333,1.0);
\draw (0,0) -- (0.5,1.0);
\draw (0,0) -- (0.667, 1.0);
\draw (0,0) -- (1.0,1.0);
\draw (0,0) -- (0.0,1.0) -- (1.0,1.0);
\end{tikzpicture}}
\caption{The fan regions of order $\myv{3}{2}$}
\label{fig:DFzero}
\end{figure}
The slopes of these lines form a $7$-term sequence
\begin{equation*}
  0 = \frac{0}{1} < \frac{1}{2} < \frac{1}{1} < \frac{3}{2} <
  \frac{2}{1} <
  \frac{3}{1} < \frac{1}{0} = \infty
\end{equation*}
and the 6 open intervals between two adjacent terms of it
bijectively correspond to the 6 fan regions of order $\myv{3}{2}.$
In general, the slopes of those lines in $DF\myh{m}{n}$
which intersect the origin
correspond to $\Gseq{m}{n}$ of Definition \ref{defi:garey}
and the fan regions are identified
with the set $\Gint{m}{n}$ of generalized Farey intervals.
\subsection{Injective tables of difference equation/point-symmetric sum congruent types}
In \cite{ranking}, a two-dimensional analog of a permutation was defined.
\begin{defi}[\cite{ranking}]
Let $m$ and $n$ be positive integers.
An  \textit{injective table} of \order $\myv{m}{n}$
is an injection $\theta:[m]\times [n] \rightarrow [mn].$
The first and second arguments $i \in [m]$ and $j \in [n]$ of $\theta$ are
referred to as horizontal and vertical indices of $\theta$, respectively.
\end{defi}
Note that an injective table is in fact a bijection and
there are $(mn)!$ injective tables  of \order $\myv{m}{n}$ in total.
Also, an injective table of \order $\myv{m}{1}$ (resp. $\myv{1}{n}$)
is identified with a permutation over $[m]$ (resp. $[n]$).

  For an injective table $\theta$, its \textit{transposition}
  ${}^t \theta$ is defined by the swap of the horizontal and vertical
  indices, similarly as the transposition for an injective L-shape
  (Definition \ref{defi:Ltp}).
\begin{defi}\label{defi:thetatp}
  Let $m$ and $n$ be positive integers and let $\theta$
  be an injective table $\theta: [m]\times[n]\to[mn].$
  The transposition ${}^t \theta$ for the injective table $\theta$
  is the injection
  from $[n]\times[m]$ to $[mn]$ defined by
  \begin{equation}
 {}^t \theta\myv{j}{i} := \theta\myv{i}{j} \quad (\myv{j}{i} \in [n]\times[m]).
  \end{equation}
\end{defi}

For example, for $\myv{m}{n}=\myv{4}{3}$,
\begin{equation}
  \theta =
  \left|\begin{matrix}6&9&11&12\\3&5&8&10\\1&2&4&7\\\end{matrix}\right|
\label{eq:ex43}
\end{equation}
where $\theta\myv{1}{1}=1$ locates lower-left corner,
$\theta\myv{m}{1}=7$ and $\theta\myv{1}{n}=6$
locate lower-right and upper-left corners respectively,
is an example of injective tables of \order $\myv{4}{3}$.
Also, its transposition is
\begin{equation}
  {}^t \theta =
  \left|\begin{matrix}7&10&12\\4&8&11\\2&5&9\\1&3&6\\\end{matrix}\right|.
\label{eq:ex43tp}
\end{equation}

An \textit{L-shape} appears as the restriction of a map
$[m] \times [n] \to [mn]$ to the subset of indices $([m] \times \{1\}) \cup
(\{1\} \times [n])$ which corresponds to a substructure of
shape like the letter ``L'' in an $m \times n$ table.
\begin{defi}[\cite{ranking3}]
Let $m$ and $n$ be positive integers.
An \textit{injective L-shape} of \order $\myv{m}{n}$
is an injection $\theta_L:([m] \times \{1\}) \cup (\{1\} \times [n]) \rightarrow [mn].$
\end{defi}
For example,
\begin{equation}
  \theta_L = \left|
  \begin{matrix}6& & & \\3& & & \\1&2&4&7\\\end{matrix}\right|
\label{eq:Lex43again}
\end{equation}
is an injective L-shape of \order $\myv{4}{3},$ which is in fact
the restriction of \eqref{eq:ex43}.

Note that a given injective L-shape $\theta_L$ of \order $\myv{m}{n}$
may be or may not be the restriction 
of an injective table of \order $\myv{m}{n}$ to $([m] \times \{1\}) \cup (\{1\} \times [n])$,
though 
the restriction $\theta|_{([m] \times \{1\}) \cup (\{1\} \times [n])}$
of an injective table $\theta$ of \order $\myv{m}{n}$
is always an injective L-shape of \order $\myv{m}{n}$.

\subsection{Injective tables and L-shapes}\label{sss:diffeqtypea}
In \cite{ranking2}, a number of types of tables were studied. Among them,
the definitions of two types of tables are
presented below.
\begin{defi}[{\cite[Definition 31]{ranking2}}]\label{defi:deq}
  Let $m \geq 2$ and $n$ be positive integers and let
  $\theta$ be a map $\theta: [m]\times[n] \rightarrow [mn]$.
  If for all $\myv{i}{j} \in [m-1]\times[n]$ the values
  \begin{multline}\label{eq:hdiffeq}
  \theta\myv{i+1}{j} - \theta\myv{i}{j}
  - \sum_{t \in [n]} \myChi{\theta\myv{1}{t} \leq \theta\myv{i+1}{j}}
  + \sum_{t \in [n]} \myChi{\theta\myv{m}{t} \leq \theta\myv{i}{j}} \\
  - (m-1)n \cdot \left(\myChi{\theta\myv{2}{1} \leq \theta\myv{i+1}{j}}
  - \myChi{\theta\myv{1}{1} \leq \theta\myv{i}{j}}\right)
\end{multline}
  are equal, then $\theta$ is said to be a table of
  \textit{horizontal difference equation type.\/}
Also, if the transposition ${}^t \theta$ is a 
table of horizontal difference equation type,
then a table $\theta$ is said to be a table of
\textit{vertical difference equation type.\/}
Moreover, if a table is simultaneously of 
horizontal/vertical difference equation types,
then the table is said to be of
\textit{difference equation type.\/}
\end{defi}
\begin{defi}[{\cite[Definition 33]{ranking2}}]\label{defi:pss}
  Let $m $ and $n$ be positive integers and let
  $\theta$ be a map $\theta: [m]\times[n] \rightarrow [mn]$.
  If the congruences
\begin{equation}\label{eq:pss}
  \theta\myv{i}{j}+\theta\myv{m+1-i}{n+1-j} \equiv \theta\myv{1}{1}+\theta\myv{m}{n}
   \pmod{mn}
\end{equation}  
are satisfied
for all $\myv{i}{j} \in [m]\times[n]$, then $\theta$ is
said to be of \textit{point-symmetric sum congruent
type.\/}
\end{defi}

Suppose that we are given  $m \geq 2, n \geq 1$ and
the values of L-shape $\theta|_{([m]\times \{1\}) \cup (\{1\}\times [n] )}$
  of an injective table $\theta:[m]\times[n]\rightarrow [mn]$ of
  horizontal difference equation type which is also of point-symmetric sum
  congruent type.
In \cite[Remark 43, Theorem 44, Corollary 45]{ranking2},
    such a procedure is provided
  that determines
  the whole table $\theta'$ from (a part of) the values
  $\theta'|_{([m]\times \{1\}) \cup (\{1\}\times [n] )}$, under the condition
  that $\theta'$ \textit{is a ranking table\/}. In fact,
  \textit{being an injective table of horizontal
    difference equation and point-symmetric
    sum congruent type\/} is an enough condition
  on $\theta'$ 
  for the procedure to run correctly.
  Thus, by the procedure,
  determining $\theta|_{([m]\times \{1\}) \cup (\{1\}\times [n] )}$
  is equivalent to identifying the entire $\theta$.

\begin{fact}[{A consequence of \cite[Remark 43, Theorem 44 and Corollary 45]{ranking2}}]\label{fact:detproc}
  Suppose that $m \geq 2, n \geq 1$ and $\theta$ is an injective
  table of \order $\myv{m}{n}$ which is simultaneously
  of horizontal
  difference equation type and of point-symmetric sum congruent
  type.
  There is a procedure that determines $\theta$ from the values
  $\theta\myv{2}{1}, \theta\myv{m}{1}$ and $\theta\myv{1}{j}$ for $j \in [n].$
\end{fact}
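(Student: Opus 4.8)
The plan is to reconstruct $\theta$ in three stages, each enlarging the set of known entries, and to invoke the reconstruction procedure of \cite[Remark 43, Theorem 44 and Corollary 45]{ranking2} to guarantee that the last stage is well defined. Throughout, write $S := \theta\myv{1}{1} + \theta\myv{m}{n}$ and recall that every value of $\theta$ lies in $[mn]=\{1,\dots,mn\}$, which is a complete residue system modulo $mn$; hence two entries congruent modulo $mn$ coincide.

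First I would recover the entire right column $\theta\myv{m}{j}$ $(j \in [n])$ using the point-symmetric sum congruence of Definition \ref{defi:pss}. Evaluating \eqref{eq:pss} at $\myv{i}{j}=\myv{m}{1}$ gives $\theta\myv{m}{1} + \theta\myv{1}{n} \equiv S \pmod{mn}$, so $S \bmod mn$ is already fixed by the two input values $\theta\myv{m}{1}$ and $\theta\myv{1}{n}$ (the latter being the $j=n$ entry of the given left column). Substituting this back into \eqref{eq:pss} at $\myv{i}{j}=\myv{m}{j}$ yields $\theta\myv{m}{j} \equiv S - \theta\myv{1}{n+1-j} \pmod{mn}$; the right-hand side is known from the left column and $S \bmod mn$, and uniqueness of representatives in $[mn]$ then pins down $\theta\myv{m}{j}$ for every $j$. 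This is exactly where the input value $\theta\myv{m}{1}$ earns its keep: it is the anchor that produces $S$.

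Next I would extract the constant value $C$ that Definition \ref{defi:deq} attaches to $\theta$ by evaluating \eqref{eq:hdiffeq} at $\myv{i}{j}=\myv{1}{1}$ (legitimate since $m\geq 2$). There the correction term $(m-1)n(\myChi{\theta\myv{2}{1} \leq \theta\myv{2}{1}} - \myChi{\theta\myv{1}{1} \leq \theta\myv{1}{1}})$ vanishes, and every remaining summand involves only $\theta\myv{2}{1}$, $\theta\myv{1}{1}$, the left column, and the now-known right column, so $C$ is computable from the given data. With $C$ and both boundary columns in hand, the horizontal difference equation type becomes, for each fixed row $j$ and increasing $i$, a forward recurrence expressing $\theta\myv{i+1}{j}$ in terms of the known quantity $\theta\myv{i}{j}$. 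Starting each row from the given entry $\theta\myv{1}{j}$ and iterating fills the whole table; in particular the missing bottom-row entries $\theta\myv{i}{1}$ $(3 \leq i \leq m-1)$ emerge along the way.

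The delicate point — and the reason the raw recurrence does not suffice by itself — is that \eqref{eq:hdiffeq} determines $\theta\myv{i+1}{j}$ only \emph{implicitly}: the unknown sits inside the brackets $\sum_t \myChi{\theta\myv{1}{t} \leq \theta\myv{i+1}{j}}$ and $\myChi{\theta\myv{2}{1} \leq \theta\myv{i+1}{j}}$, and the map $v \mapsto v - \sum_t \myChi{\theta\myv{1}{t} \leq v} - (m-1)n\,\myChi{\theta\myv{2}{1} \leq v}$ on $[mn]$ is not monotone, since the $(m-1)n$-weighted bracket produces a large downward jump at $v=\theta\myv{2}{1}$. I would therefore not try to invert this equation by an elementary monotonicity argument; instead I would cite \cite[Remark 43, Theorem 44 and Corollary 45]{ranking2}, where precisely this reconstruction is shown to admit a unique admissible solution under the two type hypotheses. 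Assembling the three stages then yields a procedure that outputs $\theta$ from $\theta\myv{2}{1}$, $\theta\myv{m}{1}$ and $\theta\myv{1}{j}$ $(j\in[n])$, as claimed.
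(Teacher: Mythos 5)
Your reconstruction is correct, and it matches the paper's treatment: the paper states this as a quoted Fact and offers no proof beyond the remark preceding it that the reconstruction procedure of \cite[Remark 43, Theorem 44, Corollary 45]{ranking2} runs correctly under the two type hypotheses. Your two elementary stages (recovering the column $\theta\myv{m}{j}$ from the point-symmetric congruence anchored at $\theta\myv{m}{1}$, then extracting the constant of the horizontal difference equation at $\myv{1}{1}$) are sound, and you correctly isolate the one genuinely nontrivial step --- unique solvability of the implicit forward recurrence, which is not invertible by monotonicity alone --- as the point where the citation must carry the argument, exactly as the paper does.
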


\subsection{Ranking tables, Young ranking tables and Young L-shapes}
Another special class of injective tables, that are parameterized
by real numbers $\alpha, \beta, \gamma$
and can be seen as a two dimensional
version of the inverses of the {\mysos} permutations,
was also defined \cite{ranking,ranking2}.
\begin{defi}[{\cite[Definition 9]{ranking2}}]\label{defi:rankingfrac}
  Let $m$ and $n$ be positive integers.
  For $\myv{\alpha}{\beta} \in \mathbb{R}^2$, $\gamma \in \mathbb{R}$ and
  $\myv{i}{j} \in [m] \times [n],$
  let
  $$
  \tau_\nyh{\alpha}{\beta}^{\gamma}\myv{i}{j} =
  \sum_{s \in [m]} \sum_{t \in [n]} \myChi{ 
\{ (s-1) \alpha + (t-1) \beta + \gamma\}
    \leq 
\{ (i-1) \alpha + (j-1) \beta + \gamma\}
  }
  $$
 where $\{ \cdot \}$ in LHS denotes the fractional part of a real number.

  The map $\tau_\nyh{\alpha}{\beta}^{\gamma}: [m] \times [n] \rightarrow [mn]$ is said to be a
  \textit{ranking table} if it is an injective table. The map is
  denoted as $\tau_\nyh{\alpha}{\beta}^{\gamma, \nyh{m}{n}}$ when the indication
  of the \order $\myv{m}{n}$ is necessary.
  The collection of all the ranking tables 
  $\tau_\nyh{\alpha}{\beta}^{0,\nyh{m}{n}}$ 
  is denoted by $\RT{m}{n}.$ Note that $\tau\myv{1}{1} = 1$
  for $\tau \in \RT{m}{n}$ by definition.
\end{defi}  
  In \cite{ranking}, it was shown
  that for each of a pair $\myv{m}{n}$ of positive integers,
  there is a surjection from the Farey regions,
  resulting from the division $[0,1]^2$ by the lines
\begin{equation}
  L\myh{m}{n} := \left\{ \myv{\alpha}{\beta} \in [0,1]^{2}  :
  \exists \myv{i}{j} \in (\mathbb{Z} \cap [0,m]) \times
  (\mathbb{Z} \cap [-n+1,n]) \setminus \{\myv{0}{0}\}
  \ \ \ i \alpha + j \beta \in \mathbb{Z} \right\} \label{eq:Linedef}
\end{equation}
  which is slightly larger set than $DF(m-1,n-1)$,
  to the ranking tables of $\gamma = \alpha + \beta$ and 
  \order $\myv{m}{n}$. This surjection may be seen
  as a 2d-counterpart of {\mysur}'s bijection \cite[Sats I]{sur}.%
\begin{fact}[{\cite[Theorem 4]{ranking}}]\label{fact:asur}
  Let $m, n $ be positive integers and let
  $\myv{\alpha}{\beta}, \myv{\alpha'}{\beta'} \in (0,1)^2.$
  If $\myv{\alpha}{\beta}$ and $\myv{\alpha'}{\beta'}$
  belong to a common Farey region (by $L(m,n)$ of \cite{ranking})
  of \order $\myv{m}{n}$, then
  $\tau_\nyh{\alpha}{\beta}^{\alpha+\beta, \nyh{m}{n}}$
  and 
  $\tau_\nyh{\alpha'}{\beta'}^{\alpha'+\beta', \nyh{m}{n}}$
  are the same. In other words, $\myv{\alpha}{\beta} \mapsto
  \tau_\nyh{\alpha}{\beta}^{\alpha+\beta, \nyh{m}{n}}$ can be regarded as
  a map from the set of the Farey regions of (by $L(m,n)$ of \cite{ranking})
  order $\myv{m}{n}$
  to the set of the ranking tables with $\gamma = \alpha + \beta$.
  Furthermore, this map is surjective.
\end{fact}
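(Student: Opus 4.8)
The plan is to read $\tau_\nyh{\alpha}{\beta}^{\alpha+\beta}$ as the ranking function of a linear order. With $\gamma=\alpha+\beta$ the definition collapses to
\[
\tau_\nyh{\alpha}{\beta}^{\alpha+\beta}\myv{i}{j}
= \sum_{s \in [m]}\sum_{t \in [n]}
\myChi{\{s\alpha + t\beta\} \leq \{i\alpha + j\beta\}},
\]
so the whole table is determined by, and determines, the linear order of the $mn$ numbers $\{s\alpha+t\beta\}$, $\myv{s}{t}\in[m]\times[n]$, inside $[0,1)$. First I would isolate the two ways this order can change as $\myv{\alpha}{\beta}$ moves: a \emph{jump line} $s\alpha+t\beta=k$ (with $\myv{s}{t}\in[m]\times[n]$, $k\in\mathbb{Z}$), across which a single value $\{s\alpha+t\beta\}$ wraps from $1^-$ to $0^+$, and a \emph{coincidence line} $(s-s')\alpha+(t-t')\beta=k$, across which two values cross. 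A short check of the index ranges ($s\in[1,m]$, $t\in[1,n]$ for the former; $|s-s'|\leq m-1$, $|t-t'|\leq n-1$ for the latter) shows both families sit inside $L\myh{m}{n}$ of \eqref{eq:Linedef}, since its index window $[0,m]\times[-n+1,n]$ contains every needed coefficient vector up to an overall sign.

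For well-definedness, fix a Farey region $R$, i.e.\ a connected component of $[0,1]^2\setminus L\myh{m}{n}$. On $R$ every $\{s\alpha+t\beta\}$ is continuous (no jump line is met) and no two of them ever coincide (no coincidence line is met); by the intermediate value theorem the sign of each difference $\{s\alpha+t\beta\}-\{s'\alpha+t'\beta\}$ is constant on the connected set $R$, so the linear order, and hence $\tau$, is constant on $R$. Distinctness of the values also forces $\tau$ to be injective, so its common value on $R$ is a genuine ranking table with $\gamma=\alpha+\beta$. This yields the asserted well-defined map from Farey regions to such ranking tables.

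Surjectivity is where the real work lies. Given any such ranking table, by definition it equals $\tau_\nyh{\alpha}{\beta}^{\alpha+\beta}$ for some $\myv{\alpha}{\beta}$, and injectivity forces the $mn$ fractional parts to be pairwise distinct; in particular $\myv{\alpha}{\beta}$ avoids every coincidence line, though it may still lie on jump lines (or on the remaining lines of $L\myh{m}{n}$, which affect neither the continuity nor the coincidences of the relevant fractional parts and hence not the order). The key observation is that distinctness permits \emph{at most one} pair $\myv{s_0}{t_0}$ with $s_0\alpha+t_0\beta\in\mathbb{Z}$, since two such pairs would give two fractional parts equal to $0$. The plan is then to perturb $\myv{\alpha}{\beta}$ to $\myv{\alpha+\varepsilon u}{\beta+\varepsilon v}$ with $\varepsilon>0$ small: choosing $\myv{u}{v}$ in the open half-plane $s_0u+t_0v>0$ (when a jump line is present) keeps the exceptional value at $0^+$ rather than letting it wrap to $1^-$, so it remains the minimum and the order is unchanged, while all other values lie in $(0,1)$ and move continuously. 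Since $L\myh{m}{n}$ is a finite union of lines, $\myv{u}{v}$ can be taken inside that half-plane and off the finitely many directions parallel to lines through $\myv{\alpha}{\beta}$; then for small enough $\varepsilon$ the perturbed point avoids all of $L\myh{m}{n}$, lies in some Farey region, and produces the same table.

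I expect the main obstacle to be exactly this last perturbation step, namely arguing that the linear order is preserved when $\myv{\alpha}{\beta}$ sits on the boundary of a region, which is the only place the correspondence could a priori fail. The single-zero observation, together with the one-sided choice of direction that prevents the minimal value from wrapping around, is what makes it go through; by contrast the constancy-on-regions half reduces to the intermediate value theorem once the two line families are exhibited inside $L\myh{m}{n}$.
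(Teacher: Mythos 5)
This statement is not proved in the paper: it is quoted as Fact \ref{fact:asur} directly from \cite[Theorem 4]{ranking}, so there is no internal proof to compare your argument against. Judged on its own, your proof is correct and self-contained. The reduction is right: with $\gamma=\alpha+\beta$ the table is the rank function of the $mn$ values $\{s\alpha+t\beta\}$, and your two critical families --- the wrap-around lines $s\alpha+t\beta=k$ with $(s,t)\in[m]\times[n]$ and the coincidence lines $(s-s')\alpha+(t-t')\beta=k$ --- do both lie in $L(m,n)$ of \eqref{eq:Linedef} after the sign normalization you note, which gives constancy of the order (hence of the table) and its injectivity on each connected component. The surjectivity half is the genuinely delicate part and you handle it correctly: injectivity of a given table excludes all coincidence lines and allows at most one vanishing fractional part, and the one-sided perturbation into the half-plane $s_0u+t_0v>0$, along a direction avoiding the finitely many line directions through $(\alpha,\beta)$, keeps that exceptional value as the strict minimum while preserving every other pairwise gap, so the perturbed point lands in a genuine Farey region carrying the same table. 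Two points you pass over quickly but which are immediate: the perturbed point remains in the open square $(0,1)^2$, and the lines of $L(m,n)$ that belong to neither family (e.g.\ coefficient vectors such as $(m,-n+1)$) are harmless exactly because the table is insensitive to them, even though they can put $(\alpha,\beta)$ on a region boundary --- your perturbation disposes of them along with everything else.
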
  
Recently, the above 2d-counterpart and
  its variant, where $L(m,n)$ and $\gamma = \alpha+\beta$ are replaced with $DF(m-1,n-1)$ and $\gamma = 0$ respectively,
  are turned out to be bijective.
\begin{fact}[{\cite[Theorems 3.23 and 3.1]{ranking4}}]\label{fact:asurmore}
    The surjective map of Fact \ref{fact:asur}
    from the set of the Farey regions of (by $L(m,n)$ of \cite{ranking})
  order $\myv{m}{n}$
  to the set of the ranking tables with $\gamma = \alpha + \beta$ is bijective.
  Also, the map
  $\myv{\alpha}{\beta} \mapsto
  \tau_\nyh{\alpha}{\beta}^{0, \nyh{m}{n}}$
  defines a bijection from the Farey regions $FF(m-1, n-1)$ (which
  results from the division of $[0,1]^2$ by $DF(m-1, n-1)$)
  to $\RT{m}{n}$.
\end{fact}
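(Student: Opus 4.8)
The plan is to take surjectivity as given (Fact \ref{fact:asur}) and prove that each of the two maps is also injective, which promotes it to a bijection. The guiding observation is that a ranking table is merely a combinatorial record of a linear order: writing $v_\nyh{i}{j}\myv{\alpha}{\beta} := \{(i-1)\alpha+(j-1)\beta+\gamma\}$ for $\myv{i}{j}\in[m]\times[n]$, Definition \ref{defi:rankingfrac} says that $\tau_\nyh{\alpha}{\beta}^{\gamma}\myv{i}{j}$ is exactly the rank of $v_\nyh{i}{j}$ in the standard order of $[0,1)$. Hence two parameters yield the same ranking table precisely when the $mn$ numbers $\{v_\nyh{i}{j}\}$ sit in the same relative order, so the map $\myv{\alpha}{\beta}\mapsto\tau$ factors through the order type of the grid $\{v_\nyh{i}{j}\}$, and injectivity reduces to showing that each Farey region is a \emph{single} order type.

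First I would pin down the locus where the order type can change. As $\myv{\alpha}{\beta}$ moves, the order of $\{v_\nyh{i}{j}\}$ is locally constant except across a degeneracy locus $D$ built from two kinds of events: a \emph{collision} $v_\nyh{i}{j}=v_\nyh{i'}{j'}$ and a \emph{wrap} $v_\nyh{i}{j}\in\mathbb{Z}$ (a point crossing the seam $0$ of $[0,1)$). Both have the form $i''\alpha+j''\beta\in\mathbb{Z}$ with $\myv{i''}{j''}$ ranging over a fixed box determined by $m,n$ (collisions contribute index differences, wraps contribute index positions), so $D$ is a finite union of lines, i.e.\ a family of Farey lines. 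At a generic point of any wall the events present are collisions between disjoint index pairs and/or a single wrap; a collision transposes two adjacent entries of the order and a wrap cyclically shifts all ranks, so the induced permutation of the table is a nontrivial product of disjoint transpositions (or a full cyclic shift). Thus \emph{crossing any wall strictly changes the ranking table}, and distinct components of the complement of $D$ carry distinct tables.

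For the second bijection ($\gamma=0$) the matching is clean, and this is the part I would write out in full. With $v_\nyh{i}{j}=\{(i-1)\alpha+(j-1)\beta\}$ a collision forces $(i-i')\alpha+(j-j')\beta\in\mathbb{Z}$ with $|i-i'|\le m-1$ and $|j-j'|\le n-1$, every wrap is the special case of position $\myv{i-1}{j-1}$ lying in the same box, and conversely every line of $DF\myh{m-1}{n-1}$ is realised as such a collision; hence $D=DF\myh{m-1}{n-1}$ exactly. Conceptually, collisions alone govern the \emph{cyclic} order (their symmetric difference box matches the $\pm$-symmetric range defining $DF$), while a ranking table is the cyclic order together with the seam position, which for $\gamma=0$ is anchored at the fixed grid point $v_\nyh{1}{1}=0$. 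Since the complement of $DF\myh{m-1}{n-1}$ is exactly the set of order types, $\myv{\alpha}{\beta}\mapsto\tau_\nyh{\alpha}{\beta}^{0}$ is a bijection from $FF\myh{m-1}{n-1}$ onto $\RT{m}{n}$.

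The first bijection, with $\gamma=\alpha+\beta$ and the slightly larger, \emph{asymmetric} family $L\myh{m}{n}$, is where I expect the main obstacle. Here the shift sends the grid to $v_\nyh{i}{j}=\{i\alpha+j\beta\}$, leaving the cyclic order (hence the collision lines) unchanged but \emph{unanchoring} the seam, so the genuine wrap lines now run over positions $\myv{i}{j}\in[m]\times[n]$ and reach the indices $i=m$, $j=n$ that lie outside the difference box; this is precisely what makes $L\myh{m}{n}$ larger than $DF\myh{m-1}{n-1}$. The crux is to verify that $L\myh{m}{n}$ coincides with this enlarged degeneracy locus with \emph{no redundant walls} — any Farey line that fails to change the table would split one order type into two regions and destroy injectivity (the degenerate small cases, where some wrap of an extreme point alters no rank, already warn that this bookkeeping is delicate). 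I would organise the surviving collisions and wraps using the transposition $\myv{\alpha}{\beta}\mapsto\myv{\beta}{\alpha}$ and the symmetry $\tp{(\cdot)}$ exploited throughout the paper (Proposition \ref{prop:tpyast}) to halve the cases, and cross-check by the cardinality identity $|FF\myh{m-1}{n-1}|=|\RT{m}{n}|$: since the map is already surjective, equality of these finite numbers forces bijectivity, and here one can feed in the known enumeration of Farey regions together with the fan count $|\Gint{m-1}{n-1}|=|\Garey{m-1}{n-1}|-1$. Finally I would note that the \emph{fan}-region versus \emph{Young}-table case is already the bijection $\yas{m-1}{n-1}$ of Fact \ref{fact:yas}, so the genuinely new content is the passage from the fan to the full plane, which the degeneracy-locus analysis above is designed to supply.
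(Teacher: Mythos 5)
First, a point of comparison: the paper does not prove this statement at all --- it is imported verbatim as a Fact from \cite[Theorems 3.23 and 3.1]{ranking4}, so there is no in-paper proof to measure your attempt against; you are reproving a cited external result from scratch. Your overall strategy --- observe that a ranking table only records the order type of the $mn$ fractional parts, identify the degeneracy locus where that order type can change, and match that locus against the relevant family of Farey lines --- is a reasonable route, and your treatment of the second map ($\gamma=0$, walls equal to $DF\myh{m-1}{n-1}$) is a plausible outline. Even there, though, your dichotomy ``a product of disjoint transpositions or a full cyclic shift'' is not exhaustive: a single line of $DF\myh{m-1}{n-1}$ can be a collision wall and a wrap wall simultaneously, and you must rule out the two effects cancelling rather than assert genericity.

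The genuine gap is in the first map, and you name it yourself without closing it. Injectivity requires that \emph{every} line of $L\myh{m}{n}$ strictly changes the table (no redundant walls) \emph{and} that no change of table occurs off $L\myh{m}{n}$; you call this ``the crux'' and then offer two substitutes for an actual argument, neither of which works. The cardinality cross-check is circular as stated: once surjectivity is granted, the equality of $|\mathrm{the\ Farey\ regions\ by\ }L\myh{m}{n}|$ with the number of ranking tables is \emph{equivalent} to the bijectivity being proved, so invoking it presupposes an independent enumeration of both sides that you do not supply --- the only count you cite, $|\Gint{m-1}{n-1}| = |\Garey{m-1}{n-1}|-1$, concerns the fan regions alone, not the full region family. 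The appeal to the transposition symmetry merely halves a case analysis you never carry out. Since everything in this Fact beyond what Fact \ref{fact:asur} already gives is precisely that injectivity/no-redundant-walls verification, the proposal as written is a plan for a proof rather than a proof.
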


A further special class of interest is \textit{Young ranking tables.}
They are a kind of well-known \textit{Young tableaux} (where
any of the rows or the columns forms an increasing sequence) of rectangular
shapes.
\begin{defi}[{\cite[Sect. 6]{ranking}} {\cite[Definition 14]{ranking2}}]
Let $m$ and $n$ be positive integers.
A ranking table $\tau_\nyh{\alpha}{\beta}^{\gamma, \nyh{m}{n}}$
is said to be a \textit{Young ranking table} if the monotonicity
$ i \leq i' \Rightarrow
\tau_\nyh{\alpha}{\beta}^{\gamma, \nyh{m}{n}}\myv{i}{j} \leq \tau_\nyh{\alpha}{\beta}^{\gamma, \nyh{m}{n}}\myv{i'}{j}$ and
$ j \leq j' \Rightarrow
\tau_\nyh{\alpha}{\beta}^{\gamma, \nyh{m}{n}}\myv{i}{j} \leq \tau_\nyh{\alpha}{\beta}^{\gamma, \nyh{m}{n}}\myv{i}{j'}$ are satisfied for all $i, i' \in [m]$ and for all $j, j' \in [n]$. The collection of all the
Young ranking tables of \order $\myv{m}{n}$ is denoted by
$\Yng{m}{n}.$
\end{defi}
The above definition sounds different from Definition \ref{defi:yr}
(by the presence of the fractional part $\{\cdot\}$ in Definition \ref{defi:rankingfrac}),
however they are in fact equivalent \cite[Definition 4.13]{ranking3}.

Hereafter we focus on the case $\gamma = 0$ and
  the indication of $\gamma = 0$ in $\tau_\nyh{\alpha}{\beta}^{0, \nyh{m}{n}}$
  is suppressed, i.e.,
  $\tau_\nyh{\alpha}{\beta}^{\nyh{m}{n}}$
  means
  $\tau_\nyh{\alpha}{\beta}^{0,\nyh{m}{n}}.$

The following fact motivates the study of the fan regions
of Definition \ref{defi:fanregion} and the generalized Farey
sequences of Definition \ref{defi:garey}.
\begin{fact}[{\cite[Proposition 9, Theorem 10, Corollary 11]{ranking}}]\label{fact:ranking}
  Let $m, n$ be positive integers and let $\myv{\alpha}{\beta},
  \myv{\alpha'}{\beta'} \in (0,1)^2$
  are points that do not belong to the lines $DF\myh{m-1}{n-1}$.
\begin{itemize}
\item[(i)] The ranking table $\tau_\nyh{\alpha}{\beta}^{ \nyh{m}{n}}$
  is a Young ranking table if and only if $(m-1) \alpha + (n-1) \beta < 1$.
\item[(ii)] Suppose that $(m-1) \alpha + (n-1) \beta < 1$ and that
  $(m-1) \alpha' + (n-1) \beta' < 1$. Two Young ranking tables
  $\tau_\nyh{\alpha}{\beta}^{ \nyh{m}{n}}$ and
  $\tau_\nyh{\alpha'}{\beta'}^{ \nyh{m}{n}}$
  are the same if and only if $\myv{\alpha}{\beta}$ and
  $\myv{\alpha'}{\beta'}$ are in a common fan region of \order $\myv{m-1}{n-1}$.
\item[(iii)] (A restatement of Definition \ref{defi:yr})
  If $\myv{\alpha}{\beta}$ satisfies
  $(m-1) \alpha + (n-1) \beta < 1$, then it follows that
\begin{equation}
  \tau_\nyh{\alpha}{\beta}^{ \nyh{m}{n}} \myv{i}{j}
  = \sum_{s \in [m]} \sum_{t \in [n]}
  \myChi{s + t \frac{\beta}{\alpha} \leq i + j \frac{\beta}{\alpha}},
  \quad \myv{i}{j} \in [m] \times [n]. \label{eq:taudef}
\end{equation}
  Under the assumption, the Young ranking table 
  $\tau_\nyh{\alpha}{\beta}^{ \nyh{m}{n}}$ is determined only by the
  ratio $\xi = \beta/\alpha$. Note also that
  when $\tau_{\nyh{\alpha}{\beta}}^{ \nyh{m}{n}}$ is promised to be
  a Young ranking table, it is computed as
  $\tau_{\nyh{1}{\xi}}^{\nyh{m}{n}}$
  even though $(m-1)\cdot 1 + (n-1) \xi < 1$ is
  not satisfied.
\item[(iv)](A restatement of Fact \ref{fact:yas})
  The following procedure gives an well-defined
  bijection \\ $\yas{m-1}{n-1}:\Gint{m-1}{n-1} \rightarrow \Yng{m}{n}$;
  Given $(a,b) \in \Gint{m-1}{n-1},$ take $\xi \in (a, b)$ arbitrarily,
  then let \\ $\yas{m-1}{n-1}((a,b)) := \tau_\nyh{1}{\xi}^{ \nyh{m}{n}}$.
  The bijection is referred to as \textit{yet another {\mysur}'s bijection}.
\end{itemize}
\end{fact}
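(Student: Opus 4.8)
The plan is to establish the four parts in the order (iii), (i), (iv), (ii), letting the explicit $\xi$-formula carry most of the weight. First I would prove (iii): under the hypothesis $(m-1)\alpha+(n-1)\beta<1$ every value $(i-1)\alpha+(j-1)\beta$ with $\myv{i}{j}\in[m]\times[n]$ lies in $[0,1)$, so no fractional-part wrap-around occurs and $\{(i-1)\alpha+(j-1)\beta\}=(i-1)\alpha+(j-1)\beta$. Then $(s-1)\alpha+(t-1)\beta\le(i-1)\alpha+(j-1)\beta$ becomes, after dividing by $\alpha>0$ and setting $\xi=\beta/\alpha$, the condition $s+t\xi\le i+j\xi$, which yields the claimed formula \eqref{eq:taudef} and in particular shows that the table depends on $\myv{\alpha}{\beta}$ only through the ratio $\xi$.

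For (i), the ``if'' direction is then immediate: by \eqref{eq:taudef} the entry $\tau_\nyh{1}{\xi}^{\nyh{m}{n}}\myv{i}{j}$ is the rank of the genuine real number $i+j\xi$, which is non-decreasing in each of $i,j$ because $\xi>0$, so the table is monotone, i.e.\ Young. For the ``only if'' direction I would argue by contraposition: if $(m-1)\alpha+(n-1)\beta\ge 1$ then the corner value at $\myv{m}{n}$ has wrapped past $1$, so its fractional part is strictly below that of some index with smaller coordinates, forcing $\tau\myv{m}{n}\neq mn$; since a Young ranking table, being monotone and bijective onto $[mn]$, must satisfy $\tau\myv{m}{n}=mn$, the table cannot be Young. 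Throughout I take $\myv{\alpha}{\beta}$ off the Farey lines $DF\myh{m-1}{n-1}$, which is exactly what makes the $mn$ fractional parts distinct and $\tau$ a genuine injective table.

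For (iv) I would use that, by (iii), a Young table is determined by the order type of the finite set $\{s+t\xi:\myv{s}{t}\in[m]\times[n]\}$ on the real line. This order type is locally constant in $\xi\in(0,\infty)$ and can change only at a value where two distinct pairs tie, $s+t\xi=s'+t'\xi$, that is at $\xi=\frac{s'-s}{t-t'}$, a rational whose reduced numerator and denominator are bounded by $m-1$ and $n-1$ respectively. The positive such critical slopes, together with $0$ and $\infty$, should be precisely the terms of $\Gseq{m-1}{n-1}$ of Definition \ref{defi:garey}, so the maximal open intervals of constancy are exactly the elements of $\Gint{m-1}{n-1}$; this makes $\yas{m-1}{n-1}$ well defined. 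Surjectivity onto $\Yng{m}{n}$ follows because (i) and (iii) exhibit every Young ranking table as some $\tau_\nyh{1}{\xi}^{\nyh{m}{n}}$, and injectivity is the assertion that the table really changes across each critical slope (the block of tying values reverses, so the induced permutation differs on the two sides). Part (ii) is then (iv) transported from the $\xi$-line back to the $\myv{\alpha}{\beta}$-plane: two admissible points yield the same Young table iff their ratios $\beta/\alpha$ lie in a common interval of $\Gint{m-1}{n-1}$, which is precisely the condition of lying in a common fan region of order $\myv{m-1}{n-1}$ (Definition \ref{defi:fanregion}). Alternatively, one may restrict the global bijection of Fact \ref{fact:asurmore} between Farey regions and ranking tables to the fan subregions and identify its image with the Young tables via (i).

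The steps I expect to require the most care are the two that are not pure bookkeeping. First, the ``only if'' half of (i): one must verify that once $(m-1)\alpha+(n-1)\beta\ge 1$ the wrap-around at the corner $\myv{m}{n}$ really destroys monotonicity, which needs the genericity afforded by staying off $DF\myh{m-1}{n-1}$. Second, and more substantially, the passage from the one-dimensional slope picture to the two-dimensional fan regions in (ii): I must show that the critical slopes produced by ties $s+t\xi=s'+t'\xi$ are \emph{exactly} the terms of $\Gseq{m-1}{n-1}$ --- both that every reduced $\frac pq$ with $p\le m-1$, $q\le n-1$ is realised by an actual tie inside $[m]\times[n]$ and that no other positive rational arises --- and then that the resulting open slope intervals match the fan regions of order $\myv{m-1}{n-1}$ under the boundary and index conventions relating $DF\myh{m-1}{n-1}$, the enlarged set $L\myh{m}{n}$ of Fact \ref{fact:asur}, and the angular sectors at the origin. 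Reconciling these three descriptions cleanly is the main obstacle; the alternative route through the ready-made bijection of Fact \ref{fact:asurmore} sidesteps part of it but still requires pinning down that its restriction to the fan subregions has image exactly $\Yng{m}{n}$.
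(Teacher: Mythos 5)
This statement is imported verbatim from \cite[Proposition 9, Theorem 10, Corollary 11]{ranking} and the present paper offers no proof of it at all --- it is a quoted Fact --- so there is no in-paper argument to compare yours against. Judged on its own terms, your sketch is a reasonable self-contained derivation and follows the natural route: the no-wrap-around computation for (iii), monotonicity of the rank of $i+j\xi$ for the ``if'' half of (i), the locally-constant-order-type argument identifying the critical slopes with $\Gseq{m-1}{n-1}$ for (iv), and the slope/sector dictionary for (ii). Your identification of the positive tie slopes $\frac{s'-s}{t-t'}$ with $\{p/q: p\in[m-1],\ q\in[n-1]\}$ is correct and does match $\Garey{m-1}{n-1}\setminus\{0,\infty\}$, and the injectivity argument (the tying block genuinely reorders across each critical slope) is sound.

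One local assertion is stated in a form that does not quite go through as written: in the ``only if'' half of (i) you claim that once $(m-1)\alpha+(n-1)\beta\ge 1$ the fractional part at the corner $\myv{m}{n}$ is strictly below that of some smaller index, hence $\tau\myv{m}{n}\neq mn$. A direct comparison with the corner is awkward when $\lfloor (m-1)\alpha+(n-1)\beta\rfloor\ge 1$ but the neighbouring values have already wrapped as well. The robust version is local: along a monotone lattice path from $\myv{1}{1}$ to $\myv{m}{n}$ the linear value increases from $0$ to something $\ge 1$ in steps of size $\alpha$ or $\beta$, each less than $1$; at the first step where an integer is crossed the fractional part strictly decreases, which already violates the Young monotonicity between two adjacent cells (genericity off $DF\myh{m-1}{n-1}$ guarantees strictness). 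Since you flagged exactly this step as the one needing care, this is a repair rather than a gap. The other point you rightly single out --- reconciling the slope intervals $\Gint{m-1}{n-1}$ with the fan regions of Definition \ref{defi:fanregion} --- is handled in the paper only by assertion in the appendix, so your plan of either proving the sector identification directly or routing through Fact \ref{fact:asurmore} is appropriate.
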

For example, For $\myv{m}{n}=\myv{4}{3}$ and $\xi = \frac{5}{4}$,
a simple computation using 
\eqref{eq:taudef} yields
\begin{equation}
\tau_\nyh{1}{5/4}^{ \nyh{4}{3}} = \left|\begin{matrix}6&9&11&12\\3&5&8&10\\1&2&4&7\\\end{matrix}\right|, \label{eq:Y43ex}
\end{equation}
where $\tau_\nyh{1}{5/4}^{ \nyh{4}{3}}\myv{1}{1}$ locates the lower-left corner.
(In fact Example \ref{exam:rank} and
the example of $\theta$ in \eqref{eq:ex43} are this
$\tau_\nyh{1}{5/4}^{ \nyh{4}{3}}$.)
The generalized Farey sequence $\Gseq{3}{2}$ for $\myv{m-1}{n-1}=\myv{3}{2}$
consists of the 7 terms
\begin{equation*}
  0 < \frac{1}{2} < \frac{1}{1} < \frac{3}{2} <
  \frac{2}{1} <
  \frac{3}{1} <  \infty
\end{equation*}
and $\xi = \frac{5}{4}$ belongs to $\left(\frac{1}{1}, \frac{3}{2} \right)
\in \Gint{3}{2}.$ Thus, the Young ranking table
$\tau_\nyh{1}{5/4}^{ \nyh{4}{3}}$ of \eqref{eq:Y43ex}
is the image $\yas{3}{2}\left(\left(\frac{1}{1}, \frac{3}{2} \right)\right) $
of this open interval $\left(\frac{1}{1}, \frac{3}{2} \right)$
by yet another {\mysur}'s bijection.

A procedure to invert $\yas{m-1}{n-1}$, i.e.,
  to obtain $(a, b) = {(\yas{m-1}{n-1})}^{-1} (\tau) \in
  \Gint{m-1}{n-1}$
  for given $\tau \in \Yng{m}{n}$ is presented in
  \cite[Corollary 5.11]{ranking4}. See also \cite[Proposition 5.8]{ranking4}.

\subsection{A characterization of the Young ranking tables}
An important fact is that a ranking table
is simultaneously difference equation type and point-symmetric sum congruent
type \cite[Theorem 39 (i)(iii)]{ranking2}.

Another note to be recalled here is one made in the end of
Section \ref{sss:diffeqtypea} that if a table
is simultaneously difference equation type and
point-symmetric sum congruent type then its L-shape
determines all the terms.

Therefore, Corollary \ref{coro:Lset} implies the following
characterization of the Young ranking tables.
        \begin{coro} \label{coro:Yng}The following holds.
          \begin{itemize}
          \item[(i)] Suppose that $m \geq 2$ and $n \geq 1$ are
            integers and 
            $\theta:[m]\times [n] \rightarrow [mn]$
            satisfies that (I) it is an injective table,
            (a-h) it is of horizontal difference equation type,
            (b) it is of point-symmetric sum congruent type.
            If $\theta$ further satisfies (c) its L-shape
            $\theta|_{([m]\times \{1\}) \cup (\{1\}\times [n] )}$
            is in $\Lsetitimono{m}{n}$, then $\theta$ is
            a Young ranking table.

            Conversely, if $\theta$ is a Young ranking table,
              then $\theta$ satisfies
              (I) $\wedge$ (a-h) $\wedge$ (b) $\wedge$ (c).
          \item[(ii)] Suppose that $m \geq 1$ and $n \geq 2$ are
            integers and 
            $\theta:[m]\times [n] \rightarrow [mn]$
            satisfies that (I) it is an injective table,
            (a-v) it is of vertical difference equation type,
            (b) it is of point-symmetric sum congruent type.
            If $\theta$ further satisfies (c) its L-shape
            is in $\Lsetitimono{m}{n}$, then $\theta$ is
            a Young ranking table.

            Conversely, if $\theta$ is a Young ranking table,
              then $\theta$ satisfies
              (I) $\wedge$ (a-v) $\wedge$ (b) $\wedge$ (c).
          \end{itemize}
        \end{coro}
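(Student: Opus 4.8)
The plan is to assemble the corollary from three earlier results: the equality $\LYng{m}{n}=\Lsetitimono{m}{n}$ of Corollary \ref{coro:Lset}, the determinacy statement Fact \ref{fact:detproc}, and the fact that every ranking table is simultaneously of difference equation type and of point-symmetric sum congruent type (\cite[Theorem 39 (i)(iii)]{ranking2}). I would prove part (i) directly and deduce part (ii) from it by transposition.

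For the forward direction of (i), I would set $\theta_L:=\theta|_{([m]\times\{1\})\cup(\{1\}\times[n])}$ and use hypothesis (c) together with Corollary \ref{coro:Lset} to place $\theta_L$ in $\LYng{m}{n}$, producing a Young ranking table $\tau\in\Yng{m}{n}$ with the same L-shape as $\theta$. Since $\tau$ is a ranking table, it is of horizontal difference equation type and of point-symmetric sum congruent type by \cite[Theorem 39 (i)(iii)]{ranking2}, so $\theta$ and $\tau$ are two injective tables of \order $\myv{m}{n}$ of the very type to which Fact \ref{fact:detproc} applies, and they agree on the boundary data $\theta\myv{2}{1},\theta\myv{m}{1},\theta\myv{1}{j}$ ($j\in[n]$) because these are entries of the common L-shape. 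Reading Fact \ref{fact:detproc} as a uniqueness statement then forces $\theta=\tau$, so $\theta$ is a Young ranking table. The converse is immediate: a Young ranking table is an injective ranking table (giving (I)), is of difference equation and point-symmetric sum congruent type by \cite[Theorem 39 (i)(iii)]{ranking2} (giving (a-h) and (b)), and has L-shape in $\LYng{m}{n}\subset\Lsetitimono{m}{n}$ by Fact \ref{fact:Lincl} (giving (c)).

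To obtain (ii) I would pass to the transpose $\tp{\theta}$. Under $\theta\mapsto\tp{\theta}$ the \order switches to $\myv{n}{m}$ with $n\geq2$, vertical difference equation type becomes horizontal difference equation type by Definition \ref{defi:deq}, point-symmetric sum congruent type is preserved (the defining congruence in Definition \ref{defi:pss} is invariant under the simultaneous swap $\myv{i}{j}\leftrightarrow\myv{j}{i}$, $\myv{m}{n}\leftrightarrow\myv{n}{m}$), membership of the L-shape in $\Lsetitimono{n}{m}$ is equivalent to membership of the original L-shape in $\Lsetitimono{m}{n}$ by Definition \ref{defi:Ltp}, and $\tp{\theta}$ is a Young ranking table precisely when $\theta$ is. Applying part (i) to $\tp{\theta}$ and transposing back gives (ii).

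I expect the only genuinely delicate point to be the use of Fact \ref{fact:detproc}: it is phrased as the existence of a procedure recovering $\theta$ from its boundary values, and I must be careful to invoke it as a uniqueness statement, namely that an injective table of horizontal difference equation and point-symmetric sum congruent type is the unique such table with its given values of $\theta\myv{2}{1},\theta\myv{m}{1}$ and $\theta\myv{1}{j}$ ($j\in[n]$). Granting that reading, the remainder is bookkeeping resting on the substantive input, Corollary \ref{coro:Lset}.
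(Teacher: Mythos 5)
Your proposal is correct and follows essentially the same route as the paper: Corollary \ref{coro:Lset} supplies a Young ranking table with the same L-shape, Fact \ref{fact:detproc} (read as a uniqueness statement, applicable to both tables since a ranking table is itself of the required types by \cite[Theorem 39 (i)(iii)]{ranking2}) forces equality, and part (ii) follows by transposition. Your explicit remark that Fact \ref{fact:detproc} must be invoked as uniqueness rather than mere existence is exactly the delicate point, and you handle it as the paper implicitly does.
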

In a word, a Young ranking table, whose definition depends on
a real number $\xi = \frac{\beta}{\alpha}$, has a purely combinatorial
characterization that is the conjunction of
(I) being an injective table
(a-h) or (a-v) being a table of horizontal or vertical equation type
(b) being a table of point-symmetric sum congruent type
and (c) having an L-shape in $\Lsetitimono{m}{n}$.

In \cite{arXiv2024}, the set of the inverses of the
  {\mysos} permutations, whose definition involves a real parameter $\alpha$,
  is given purely combinatorial characterization by a congruential recurrence.
  Corollary \ref{coro:Yng} may be seen as an analog of the characterization
  done in \cite{arXiv2024}.
\begin{proof}  
(i) Suppose that $\theta$ satisfies (I), (a-h), (b) and (c). 
  By (I), (c) and Corollary \ref{coro:Lset},
  $\theta|_{([m]\times \{1\}) \cup (\{1\}\times [n] )} \in \LYng{m}{n}$,
    which means that there exists a Young ranking table $y \in \Yng{m}{n}$
    such that
    $\theta|_{([m]\times \{1\}) \cup (\{1\}\times [n] )} \\
    = y|_{([m]\times \{1\}) \cup (\{1\}\times [n] )}.$
      By Fact \ref{fact:detproc}, (I), (a-h) and (b), these equal L-shapes determine the same
      table $\theta = y.$
      Conversely, if $\theta$ is
        a Young ranking table, it satisfies (a-h) and (b)
        by \cite[Theorem 39(i)(iii)]{ranking2}, as noted in the
        beginning of this subsection. By Fact \ref{fact:Lincl},
        the property (c) is also satisfied. (I) is a part of the definition
      of a ranking table.
        
      (ii) Taking the transposition $\tp{\theta}$ reduces the problem to (i),
      where (a-h) is turned to (a-v).
\end{proof}

\end{document}